\documentclass[preprint,12pt]{elsarticle}
\usepackage{amsmath}
\usepackage{amssymb}
\usepackage{lineno}
\usepackage{graphics}
\usepackage[T1]{fontenc}
\usepackage[latin9]{inputenc}
\usepackage{babel}
\usepackage{amsbsy}
\usepackage{amstext}
\usepackage{amsthm}
\usepackage{enumitem}
\usepackage[unicode=true,
 bookmarks=true,bookmarksnumbered=false,bookmarksopen=false,
 breaklinks=false,pdfborder={0 0 1},backref=false,colorlinks=false]
 {hyperref}
\usepackage{breakurl}
\usepackage{mathtools}
\usepackage[mathscr]{eucal}


\makeatletter

  \newtheorem{theorem}{Theorem}[section]
  \newtheorem{corollary}[theorem]{Corollary}
  \newtheorem{lemma}[theorem]{Lemma}
  
  \newtheorem{proposition}[theorem]{Proposition}

  \theoremstyle{definition}
  \newtheorem{definition}[theorem]{Definition}

\def\ps@pprintTitle{%
   \let\@oddhead\@empty
   \let\@evenhead\@empty
   \let\@oddfoot\@empty
   \let\@evenfoot\@oddfoot
}
\begin{document}
\begin{frontmatter}
\title{Large sets near idempotent and its product}

\author[1]{Surajit Biswas\corref{cor1}}
\ead{241surajit@gmail.com}
\author[2]{Sourav Kanti Patra}
\ead{souravkantipatra@gmail.com}
\cortext[cor1]{Corresponding author}
\address[1]{Department of Mathematics, Ramakrishna Mission Vidyamandira, Belur Math, Howrah, West Bengal, India, 711202.}
\address[2]{Department of Mathematics, Centre for Distance And Online Education, The University of Burdwan, Golapbag, Purba Bardhaman, West Bengal, India, 713104.}
\begin{abstract}
Tootkaboni and Vahed introduced the notion of some large sets near idempotent along with some combinatorial properties. We characterize when the finite Cartesian product of central sets near idempotent is central near idempotent. Moreover, we provide a partial characterization for the infinite Cartesian product of the same. We then study the abundance of some large sets near idempotent. We also investigate the effect of tensor product near zero. Finally, as an application we provide a characterization of members of polynomials (with constant term 0) evaluated at idempotents in a near zero semigroup.
\end{abstract}
\begin{keyword}
Central set near idempotent, Piecewise syndetic set near idempotent, Tensor product, Milliken-Taylor System near zero.
\MSC[2020] 03E05, 05D10.
\end{keyword}
\end{frontmatter}
\section{Introduction}
The notion of "central subset" was first introduced by Furstenberg \cite{f} in the semigroup $(\mathbb{N}, +)$ of natural numbers. Later, Bergelson and Hindman \cite{bh1} defined the notion of a central set in an arbitrary semigroup. The class of central sets is an important class of combinatorially rich large sets, as they satisfy the Central Sets Theorem \cite[Corollary 14.14.10]{hs}. A localized notion of central set, called central set near idempotent was introduced by Tootkaboni and Vahed in \cite{tv}. In a commutative semigroup, central sets near idempotent is a class of localized sets around an idempotent, those are combinatorially rich, because they satisfy the Central Sets Theorem near idempotent \cite[Theorem 4.3]{tv}.
In \cite{hs2}, Hindman and Strauss have characterized when the (finite and infinite) Cartesian product of sets is piecewise syndetic or central. In \cite{bbp}, similar (partial) characterization has been provided for some other class of large sets in an arbitrary semigroup. In Theorem \ref{2.21} and Theorem \ref{2.22}, we provide similar partial characterizations for piecewise syndetic set near idempotent and central set near idempotent, respectively.

On the other hand, central sets can also be studied from a view point of Ramsey theory. The main goal of Ramsey theory is to study the structures which can not be broken under a finite partition of the system. Van der Waerden's theorem \cite{v} is one such result, which says that, under any finite partition of an infinite arithmetic progression, one of the cells always contains arbitrary long (finite) arithmetic progressions. A deeper observation reveals that for any large set $C$ in the semigroup $(\mathbb{N}, +)$, the largeness of the set $R=\{(a, d)\in\mathbb{N}\times\mathbb{N}:\{a, a+d, \ldots, a+(k-1)d\}\subseteq C\}$ in the semigroup $(\mathbb{N}\times\mathbb{N}, +)$ are of special interest, which leads to the study of abundance of large sets. In \cite{tv}, Tootkaboni and Vahed introduces the notion of some largeness near idempotent. We study the abundance of some large sets near idempotent in Theorem \ref{3.13}. Similar results for near zero semigroup is provided in Corollary \ref{Corollary 3.13} and Theorem \ref{Theorem 3.14}. The matrix version of these results are deduced in \cite{hs3} and \cite{hs4}. In Theorem \ref{3.17} and \ref{3.19}, we provide the near zero analogue of these results. 

Another important result in Ramsey theory is Milliken-Taylor theorem, which ensures the partition regularity of a very general class of infinite configurations called Milliken-Taylor systems. In \cite{bhw}, authors extended these systems to images of very general extended polynomials using the tensor product of ultrafilters. In Section \ref{4}, we begin with the study of tensor product and Lemma \ref{4.5} ensures that the near zero property gets preserved under the tensor product. In Theorem \ref{Theorem 4.9}, we provide a relationship between Milliken-Taylor System and a linear form in one variable evaluated at an ultrafilter in a near zero semigroup. In Theorem \ref{4.11}, we provide a combinatorial characterization of members of polynomials (with constant term 0) evaluated at idempotents for a near zero subsemigroup of both $([0,\infty),+)$ and $([0,\infty),\cdot)$. Finally, in Theorem \ref{4.12} and \ref{4.13}, we conclude with two multidimensional Ramsey theoretic results near zero.

The notion of central set near idempotent is algebraically defined in terms of Stone-\v{C}ech compactification of the corresponding semigroup. For a given semigroup $S$, the Stone-\v{C}ech compactification of $S_d$ (Given a topological space $X$, the notation $X_d$ represents the set $X$ with the discrete topology), i.e. $\beta S_d$ can be naturally identified with the collection of all ultrafilters on $S$ \cite[Theorem 3.27]{hs}, and the semigroup structure on $S$ induces a unique semigroup structure on $\beta S_d$ so that $\beta S_d$ becomes a right topological semigroup with $S$ contained in its topological center \cite[Theorem 4.1]{hs}. This semigroup structure is explicitly given by, for $p, q\in\beta S_d$, $p+q:=\{A\subseteq S:\{s\in S:-s+A\in q\}\in p\}$. From now onwards, whenever we consider Stone-\v{C}ech compactification of a semigroup $S$, we write $\beta S$ for $\beta S_d$ and identify with the ultrafilters on $S$. For other details regarding Stone-\v{C}ech compactification, see \cite{hs}.

\section{Cartesian product near Idempotent}
Let $(T,+)$ be a semitopological semigroup. Throughout this article, we assume all our semitopological semigroups to be Hausdorff. Note that the semitopological structure on the semigroup $T$ is the topology on $T$ such that for each $x\in T$, the left translation map $L_x$ and the right translation map $R_x$ are continuous, where $L_x(y):=x+y$ and $R_x(y):=y+x$, $y\in T$. Let $S$ be a subsemigroup of $T$. We denote the collection of idempotents in $T$
by $E(T)$. Moreover, for every $x\in T$, $\tau_{x}$ denotes the
collection of all neighborhoods of $x$, where a set $U$ is called
a neighborhood of $x\in T$ if $x\in\text{{int}}_{T}(U)$, i.e. if
$x$ is an interior point of $U$. Following \cite[Definition 2.1(a)]{tv}, here we define the collection of ultrafilters on $S$ near an idempotent.
\begin{definition}
Let $S$ be a subsemigroup of a semitopological semigroup $T$. For $e\in E(T)\setminus S$ we denote $e_S^*=\big\{ p\in\beta S:e\in\bigcap_{A\in p}cl_{T}A\big\}$.
\end{definition}
Observe that $e_S^*=\{p\in\beta S: U\cap S\in p \text{ for each } U\in\tau_e\}$.
\begin{definition} 
Let $S$ be a subsemigroup of a semitopological semigroup $T$, and let $e\in E(T)\setminus S$. $S$ is said to be near $e$ subsemigroup of $T$ if $e\in cl_T(S)$.
\end{definition}

For example, $((0,\infty),+)$ is a near $0$ subsemigroup of $([0,\infty),+)$. The semigroup $((0,1),\cdot)$ is both a near 0 subsemigroup and a near $1$ subsemigroup of $([0,1],\cdot)$. Note that here we consider the above semigroups with the usual Euclidean topology. These examples are already introduced and studied in \cite{hl}. For a noncommutative example, consider the Heisenberg group $H:=\big\{[x\,\,y\,\,z]:x,y,z\in\mathbb{R}\big\}$ (where we use the notation $[x\,\,y\,\,z]$ to denote the upper triangular matrix $\begin{pmatrix}
1 &x &z\\
  &1 &y\\
  &  &1
\end{pmatrix}$) with the usual matrix multiplication, and with the topology induced via its identification with $\mathbb{R}^3$ given by $[x\,\,y\,\,z]\longleftrightarrow (x,y,z)$, $(x,y,z)\in\mathbb{R}^3$. Then $H$ is a nilpotent real Lie group. Let $I=[0\,\,0\,\,0]$. Then the subsemigroup $H_{+}:=\big\{[x\,\,y\,\,z]:x,y,z\in (0,\infty)\}$ is a near $I$ subsemigroup of $H$.

\begin{lemma}
If $S$ is a near $e$ subsemigroup of a semitopological semigroup $T$, then $e_S^*$ is a compact subsemigroup of $\beta S$.
\end{lemma}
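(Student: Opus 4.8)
The plan is to establish three things: that $e_S^*$ is nonempty, that it is closed in $\beta S$ (hence compact), and that it is closed under the semigroup operation.

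First, nonemptiness is exactly where the near $e$ hypothesis enters. The family $\{U\cap S : U\in\tau_e\}$ is closed under finite intersections up to refinement (since $\tau_e$ is), and because $e\in cl_T(S)$ each finite intersection $(U_1\cap\cdots\cap U_n)\cap S$ is the trace on $S$ of a neighborhood of $e$ and is therefore nonempty. Thus this family has the finite intersection property and extends to an ultrafilter $p$ on $S$, which by construction satisfies $U\cap S\in p$ for every $U\in\tau_e$; by the remark following the definition, $p\in e_S^*$.

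Second, for compactness I would write $e_S^*=\bigcap_{U\in\tau_e}\overline{U\cap S}$, where $\overline{A}=\{p\in\beta S : A\in p\}$ denotes the basic clopen set determined by $A\subseteq S$. Each $\overline{U\cap S}$ is closed, so $e_S^*$ is an intersection of closed sets and hence closed in the compact space $\beta S$, which gives compactness.

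The main work, and the main obstacle, is showing $e_S^*$ is a subsemigroup, and the delicate point is that $T$ is only \emph{semitopological}, so I may use continuity in one variable at a time but not joint continuity. Let $p,q\in e_S^*$ and fix $U\in\tau_e$; I must show $U\cap S\in p+q$, i.e. $\{s\in S : -s+(U\cap S)\in q\}\in p$, where $-s+(U\cap S)=\{t\in S : s+t\in U\}$ using that $S$ is a subsemigroup. Exploiting $e+e=e\in\mathrm{int}_T U$, I would first apply continuity of the right translation $R_e$ at the point $e$: the set $V:=R_e^{-1}(\mathrm{int}_T U)=\{s\in T : s+e\in\mathrm{int}_T U\}$ is open and contains $e$, so $V\cap S\in p$. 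Then, for each fixed $s\in V\cap S$, I apply continuity of the left translation $L_s$ at $e$: since $s+e\in\mathrm{int}_T U$, the set $W_s:=L_s^{-1}(\mathrm{int}_T U)=\{t\in T : s+t\in\mathrm{int}_T U\}$ is open and contains $e$, whence $W_s\cap S\in q$. As $W_s\cap S\subseteq\{t\in S : s+t\in U\}=-s+(U\cap S)$, we get $-s+(U\cap S)\in q$ for every $s\in V\cap S$. Therefore $V\cap S\subseteq\{s\in S : -s+(U\cap S)\in q\}$, and since $V\cap S\in p$ this last set lies in $p$, giving $U\cap S\in p+q$ as required. The one thing to watch is that both translation arguments are applied with the other coordinate held fixed (first $e$, then $s$), so that separate continuity suffices.
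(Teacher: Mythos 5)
Your proposal is correct, and its overall skeleton (nonempty, closed hence compact, closed under the operation) matches the paper's. The nonemptiness and compactness steps are essentially identical to the paper's: the paper invokes \cite[Theorem 3.11]{hs} to produce an ultrafilter containing $\{U\cap S:U\in\tau_e\}$, which is just a packaged form of your finite-intersection-property argument, and both proofs then write $e_S^*=\bigcap_{U\in\tau_e}cl_{\beta S}(U\cap S)$ to get closedness in $\beta S$. The genuine difference is the subsemigroup property: the paper does not prove it at all, but defers to \cite[Lemma 2.3(i)]{tv}, whereas you supply a complete direct argument. Your argument is sound: from $e+e=e\in\mathrm{int}_T U$ you use continuity of $R_e$ to get $V\in\tau_e$ with $V\cap S\in p$, then for each fixed $s\in V\cap S$ continuity of $L_s$ gives $W_s\in\tau_e$ with $W_s\cap S\in q$, and $W_s\cap S\subseteq -s+(U\cap S)$ (using that $S$ is a subsemigroup so $s+t\in S$), so $V\cap S\subseteq\{s\in S:-s+(U\cap S)\in q\}\in p$ and $U\cap S\in p+q$; since $U\in\tau_e$ was arbitrary, $p+q\in e_S^*$ by the characterization following the definition. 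You are also right to flag that only separate continuity is used, each time with one coordinate frozen, and that idempotency of $e$ is what makes $V$ and $W_s$ neighborhoods of $e$. What your route buys is self-containedness: the reader need not consult Tootkaboni--Vahed to verify the one algebraically substantive claim; what the paper's route buys is brevity and an explicit pointer to where the fact originates.
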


\begin{proof}
If $e\in cl_T(S)$, pick by \cite[Theorem 3.11]{hs} an ultrafilter $p\in \beta S$ so that $\{U\cap S:U\in\tau_e\}\subseteq p\subseteq \{A\subseteq S:e\in cl_T(A)\}$. Hence $p\in e_S^*$, i.e. $e_S^*$ is nonempty. The fact that $e_S^*$ is a subsemigroup of $\beta S$ follows from \cite[Lemma 2.3(i)]{tv}. 

Finally, observe that $p\in e_S^*$ if and only if $\{U\cap S:U\in\tau_e\}\subseteq p$, i.e. if and only if $p\in \bigcap_{U\in\tau_e}cl_{\beta S}(U\cap S)$. Thus $e_S^*=\bigcap_{U\in\tau_e}cl_{\beta S}(U\cap S)$, and therefore $e_S^*$ is compact. 
\end{proof}

By \cite[Theorem 2.8]{hs}, any compact right topological semigroup $\mathcal{S}$ has a smallest ideal $K(\mathcal{S})$ which is the union of all minimal left ideals of $\mathcal{S}$ and also the union of all minimal right ideals of $\mathcal{S}$. In the next definition, first we recall the notion
of central set near an idempotent \cite[Definition 3.9]{tv}, and then introduce the notion of $central^*$ set near idempotent. 
\begin{definition}\label{2.3}
Let $S$ be a near $e$ subsemigroup of a semitopological semigroup $T$. Let $K(e_S^*)$
be the smallest ideal of the semigroup $e_S^*$. Let $A\subseteq S$.
\begin{enumerate}
 \item The set $A$ is central near $e$ if there is some idempotent $p\in K(e_S^*)$ with $A\in p$.
 \item The set $A$ is $central^*$ near $e$ if $A$ intersects every central set near $e$. 
\end{enumerate}
\end{definition}
For a given sequence $\langle x_n\rangle_{n=1}^\infty$ in a noncommutative semigroup, and for $F\in\mathcal{P}_f(\mathbb{N})$ we define $\sum_{n\in F}x_n$ to be the sum in increasing order of indices, i.e. if $F=\{n_1<n_2<\cdots <n_k\}\in\mathcal{P}_f(\mathbb{N})$, then $\sum_{n\in F}x_n:=x_{n_1}+x_{n_2}+\cdots +x_{n_k}$. Moreover, for $F,G\in\mathcal{P}_f(\mathbb{N})$, we denote $F<G$ if $\max F<\min G$. When we say that a sequence $\langle H_n\rangle_{n=1}^\infty$ in $\mathcal{P}_f(\mathbb{N})$ is an increasing sequence we mean that for each $n\in\mathbb{N}$, $H_n<H_{n+1}$. We recall the following notion of finite sum ("$FS$" in short) \cite[Definition 5.1(b)]{hs} and sum subsystem \cite[Definition 1.3]{bhw}.
\begin{definition}
Let $(S,+)$ be a semigroup. Given a sequence $\langle x_n\rangle_{n=1}^\infty$ in $S$, $FS\big(\langle x_n\rangle_{n=1}^\infty\big)=\big\{\sum_{n\in F}x_n: F\in\mathcal{P}_f(\mathbb{N})\big\}$.
\end{definition}
\begin{definition}
In a semigroup $(S,+)$, $FS\big(\langle y_n\rangle_{n=1}^\infty\big)$ is a sum subsystem of $FS\big(\langle x_n\rangle_{n=1}^\infty\big)$ if there exists an increasing sequence $\langle H_n\rangle_{n=1}^\infty$ in $\mathcal{P}_f(\mathbb{N})$ such that for each $n\in\mathbb{N}$, $y_n=\sum_{t\in H_n}x_t$.
\end{definition}
The following notion of convergence \cite[Definition 3.1]{tv} will be used in the definition of $IP$ set and $IP^*$ set near idempotent.
\begin{definition}
Let $S$ be a near $e$ subsemigroup of a semitopological semigroup $(T,+)$. Let $\langle x_n\rangle_{n=1}^\infty$ be a sequence in $S$. We say $\sum_{n=1}^\infty x_n$ converges near $e$ if for each $U\in\tau_e$ there exists $m\in\mathbb{N}$ such that $FS(\langle x_n\rangle_{n=m}^\infty)\subseteq U$.
\end{definition}

\begin{definition}
Let $S$ be a near $e$ subsemigroup of a semitopological semigroup $(T,+)$. Let $A\subseteq S$.
\begin{enumerate}
    \item The set $A$ is $IP$ near $e$ if there is some sequence $\langle x_n\rangle_{n=1}^{\infty}$ in $S$ such that $\sum_{n=1}^{\infty}x_n$ converges near $e$ and $FS\big(\langle x_n\rangle_{n=1}^{\infty}\big)\subseteq A$.
    \item The set $A$ is $IP^*$ near $e$ if $A$ intersects all the $IP$ sets near $e$.
\end{enumerate}
\end{definition}
For a topological space $X$ with $x\in X$, we say that $x$ has a countable local base in $X$ if $x$ has a countable neighborhood base $\langle U_n\rangle_{n=1}^\infty$ in $X$. If $\langle U_n\rangle_{n=1}^\infty$ is a countable neighborhood base for $x$, one can let $V_n=\bigcap_{k=1}^n U_k$ so that $V_{n+1}\subseteq V_n$ for each $n\in\mathbb{N}$. Therefore, for a countable local base $\langle U_n\rangle_{n=1}^\infty$ we will always assume that $U_{n+1}\subseteq U_n$ for each $n\in\mathbb{N}$. The following theorem provides an algebraic characterization of $IP$ sets near idempotent. (Given a set $Y$ we write $\mathcal{P}_f(Y)$ for the set of finite nonempty subset of $Y$.)
\begin{theorem}\label{Theorem 2.8}
Let $S$ be a near $e$ subsemigroup of a semitopological semigroup $(T,+)$ and let $A\subseteq S$.
\begin{enumerate}
\item If there is a sequence $\langle x_n\rangle_{n=1}^\infty$ such that $\sum_{n=1}^\infty x_n$ converges near $e$ and $FS(\langle x_n\rangle_{n=1}^\infty )\subseteq A$, then there is an idempotent $p\in e_S^*$ such that $A\in p$.
\item If $e$ has a countable local base in $T$ and there is an idempotent $p\in e_S^*$ such that $A\in p$, then there is a sequence $\langle x_n\rangle_{n=1}^\infty$ such that $\sum_{n=1}^\infty x_n$ converges near $e$ and $FS(\langle x_n\rangle_{n=1}^\infty)\subseteq A$.
\end{enumerate}
\end{theorem}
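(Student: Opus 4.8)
The plan is to prove the two implications separately, following the Galvin--Glazer strategy adapted to the near-$e$ setting.

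For part (1), I would write $T_m=FS(\langle x_n\rangle_{n=m}^\infty)$ for each $m\in\mathbb N$ and set $Q=\bigcap_{m=1}^\infty cl_{\beta S}(T_m)=\{q\in\beta S:T_m\in q\text{ for all }m\}$. Since the $T_m$ are nonempty and decreasing, $Q$ is a nonempty closed, hence compact, subset of $\beta S$. First I would check that $Q$ is a subsemigroup: for $q,r\in Q$ and any $s=\sum_{i\in F}x_i\in T_m$ with $k=\max F$, the inclusion $F<H$ shows $T_{k+1}\subseteq -s+T_m$, so $-s+T_m\in r$; as this holds for every $s\in T_m$ and $T_m\in q$, we get $T_m\in q+r$, whence $q+r\in Q$ (this is the usual argument, cf.\ \cite{hs}). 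Next, the convergence hypothesis gives, for each $U\in\tau_e$, an $m$ with $T_m\subseteq U\cap S$; since $T_m\in q$ for every $q\in Q$, this forces $U\cap S\in q$, i.e.\ $Q\subseteq e_S^*$. Finally $Q$ is a compact right topological semigroup, so by Ellis's theorem \cite{hs} it contains an idempotent $p$; then $p\in e_S^*$ and $T_1=FS(\langle x_n\rangle_{n=1}^\infty)\subseteq A$ forces $A\in p$.

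For part (2), let $p\in e_S^*$ be an idempotent with $A\in p$ and fix a decreasing neighborhood base $\langle U_n\rangle_{n=1}^\infty$ of $e$. Put $A^\star=\{x\in A:-x+A\in p\}$; the standard idempotent lemma \cite{hs} gives $A^\star\in p$ and $-x+A^\star\in p$ for every $x\in A^\star$. I would construct $\langle x_n\rangle$ recursively, maintaining the invariant that for every $\emptyset\neq F\subseteq\{1,\dots,n\}$ one has $\sum_{i\in F}x_i\in A^\star$ and, crucially, both $\sum_{i\in F}x_i$ and $(\sum_{i\in F}x_i)+e$ lie in $\text{int}_T(U_{\min F})$. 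The algebraic part of the extension is routine: with $s_G=\sum_{i\in G}x_i$, the set $A^\star\cap\bigcap\{-s_G+A^\star:\emptyset\neq G\subseteq\{1,\dots,n\}\}$ belongs to $p$, and membership in it guarantees $s_G+x_{n+1}\in A^\star$. The topological part is where separate continuity enters: for each $G$, continuity of the left translation $L_{s_G}$ at $e$ together with $s_G+e\in\text{int}_T(U_{\min G})$ yields a neighborhood of $e$ on which $s_G+x_{n+1}\in\text{int}_T(U_{\min G})$, while continuity of the composite $y\mapsto s_G+(y+e)$ and of $R_e$ controls the $e$-translate $(s_G+x_{n+1})+e=s_G+(x_{n+1}+e)$ and the singleton sum $x_{n+1}$. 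Intersecting these finitely many neighborhoods with $\text{int}_T(U_{n+1})$ gives an open $W\in\tau_e$, so $W\cap S\in p$ because $p\in e_S^*$; choosing $x_{n+1}$ in the intersection of $W\cap S$ with the $p$-large algebraic set preserves the invariant. The base case is the choice of $x_1$, which uses only continuity of $R_e$ at $e$ since $e+e=e\in\text{int}_T(U_1)$. Granting the construction, $FS(\langle x_n\rangle)\subseteq A^\star\subseteq A$, and for any $U\in\tau_e$ I pick $m$ with $U_m\subseteq U$; then every $F$ with $\min F\geq m$ satisfies $\sum_{i\in F}x_i\in\text{int}_T(U_{\min F})\subseteq U_m\subseteq U$, so $FS(\langle x_n\rangle_{n=m}^\infty)\subseteq U$ and $\sum_{n=1}^\infty x_n$ converges near $e$.

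The hard part is the topological bookkeeping in part (2). Because $T$ is only semitopological, I cannot add two elements lying near $e$ and expect their sum to remain near $e$, and the partial sums $s_G$ are not themselves near $e$; thus a naive attempt to force $s_G+x_{n+1}\in U_{\min G}$ by continuity of $L_{s_G}$ fails, as that translation need not send $e$ into $\text{int}_T(U_{\min G})$. The device that makes separate continuity suffice is to carry the auxiliary datum $s_G+e\in\text{int}_T(U_{\min F})$ inside the invariant, so that at each stage the relevant constraint sets are genuine neighborhoods of $e$ and hence lie in $p$. Verifying that this strengthened invariant can be propagated through the recursion is the step I expect to require the most care.
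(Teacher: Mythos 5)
Your proof is correct in both parts, and part (1) is essentially the paper's own argument (the paper quotes \cite[Lemma 5.11]{hs} for the subsemigroup property of $\bigcap_{m=1}^\infty cl_{\beta S}\bigl(FS(\langle x_n\rangle_{n=m}^\infty)\bigr)$ and gets membership in $e_S^*$ by contradiction rather than directly, which is cosmetic). Part (2), however, takes a genuinely different route. You keep the Galvin--Glazer target purely algebraic ($A^\star$) and enforce the ``near $e$'' requirement through the topology, carrying the extra invariant that $\sum_{i\in F}x_i$ and $\bigl(\sum_{i\in F}x_i\bigr)+e$ lie in $\text{int}_T(U_{\min F})$ and invoking continuity of $L_{s_G}$, of $R_e$, and of $L_{s_G}\circ R_e$ at each extension step; I checked that this invariant does propagate, so your construction is sound. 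The paper instead builds the neighborhoods into the Galvin--Glazer sets themselves: since $p\in e_S^*$, one has $B\cap U\in p$ for every $B\in p$ and $U\in\tau_e$, so it defines $B_1=A\cap W_1$ and $B_{k+1}=(-x_k+B_k^\star)\cap B_k\cap(A\cap W_{k+1})$, all members of $p$, picks $x_k\in B_k^\star$, and the standard induction on $|F|$ shows that every sum with $\min F=k$ lies in $B_k\subseteq W_k$, so convergence near $e$ is automatic. Consequently the ``hard part'' you single out---the topological bookkeeping needed to make separate continuity suffice---is an artifact of your decomposition rather than an intrinsic difficulty: the paper bypasses it entirely, never using continuity of translations in part (2) (the topology enters only through the single fact $W\cap S\in p$), so its argument would work for an arbitrary topology on $T$, whereas yours genuinely consumes the semitopological hypothesis. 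What the paper's route buys is brevity and the insight that the near-$e$ Galvin--Glazer construction is a purely ultrafilter-theoretic matter; your route buys no extra generality, though it does show the result can also be reached by tracking the sums through the topology.
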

\begin{proof}
 \begin{enumerate}
  \item Let $J=\bigcap_{m=1}^\infty cl_{\beta S}(FS(\langle x_n\rangle_{n=m}^\infty))$. By \cite[Lemma 5.11]{hs}, $J$ is a closed subsemigroup of $\beta S$ so there is an idempotent $p\in J$. It suffices to show that $p\in e_S^*$ so suppose instead that $p\notin e_S^*$. Pick $B\in p$ such that $e\notin cl_T(B)$, and pick $U\in\tau_e$ such that $U\cap B=\emptyset$. Pick $m\in\mathbb{N}$ such that $FS(\langle x_n\rangle_{n=m}^\infty)\subseteq U$. This is a contradiction because $FS(\langle x_n\rangle_{n=m}^\infty)\in p$ and $B\in p$.
  \item Let $\langle W_n\rangle_{n=1}^\infty$ be a local base at $e$. Pick an idempotent $p\in e_S^*$ such that $A\in p$. By \cite[Lemma 4.14]{hs} if $B\in p$, $B^\star=\{x\in B:-x+B\in p\}$, and $x\in B^\star$, then $-x+B^\star\in p$. Note that for each $B\in p$ and each $U\in\tau_e$, $B\cap U\in p$.

Let $B_1=A\cap W_1$. Pick $x_1\in B_1^\star$, let $B_2=(-x_1+B_1^\star)\cap (A\cap W_2)$, and pick $x_2\in B_2^\star$. Inductively let $n\geq 2$ and assume we have chosen $\langle B_t\rangle_{t=1}^n$ and $\langle x_t\rangle_{t=1}^n$ such that if $k\in\{1,2,\ldots,n-1\}$, then $B_{k+1}=(-x_k+B_k^\star)\cap B_k\cap (A\cap W_{k+1})$ and for $k\in\{1,2,\ldots,n\}$, $B_k\in p$ and $x_k\in B_k^\star$. Let $B_{n+1}=(-x_n+B_n^\star)\cap B_n\cap (A\cap W_{n+1})$.

Having chosen $\langle B_t\rangle_{t=1}^\infty$ and $\langle x_t\rangle_{t=1}^\infty$, we claim that if $F\in\mathcal{P}_f(\mathbb{N})$ and $\min F=k$, then $\sum_{t\in F}x_t\in B_k$. We establish this by induction on $|F|$. If $F=\{k\}$, we have $x_k\in B_k$ as required. So assume that $|F|>1$, let $G=F\setminus\{k\}$, and let $m=\min G$. Then $\sum_{t\in G}x_t\in B_m\subseteq B_{k+1}\subseteq (-x_k+B_k)$ so $\sum_{t\in F}x_t\in B_k$.

To see that $\sum_{n=1}^\infty$ converges near $e$, let $U\in\tau_e$. Pick $m$ such that $W_m\subseteq U$. Then $FS(\langle x_n\rangle_{n=m}^\infty)\subseteq B_m\subseteq W_m$.
 \end{enumerate}
\end{proof}

If for each $i\in I$, $X_i$ is a set and $x_i\in X_i$, we will write $\times_{i\in I}x_i$ for the element of $\bigtimes_{i\in I}X_i$ whose $i^{th}$ coordinate is $x_i$.

\begin{lemma}\label{2.7}
Let $I$ be a set. For each $i\in I$, let $S_i$ be a near $e_i$ subsemigroup of a semitopological semigroup $T_i$. Let $T=\bigtimes_{i\in I}T_{i}$, $S=\bigtimes_{i\in I}S_{i}$ and $e=\times_{i\in I}e_{i}$. If for each $i\in I$, $\widetilde{\pi}_i:\beta S\rightarrow \beta S_i$ is the continuous extension of the projection homomorphism $\pi_i:S\rightarrow S_i$, then $\widetilde{\pi}_{i}[e_S^*]=(e_i)_{S_i}^*$. 
\end{lemma}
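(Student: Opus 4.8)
The plan is to prove the two inclusions $\widetilde{\pi}_i[e_S^*]\subseteq (e_i)_{S_i}^*$ and $(e_i)_{S_i}^*\subseteq \widetilde{\pi}_i[e_S^*]$ separately, using throughout the working descriptions $e_S^*=\{p\in\beta S: V\cap S\in p \text{ for all } V\in\tau_e\}$ and $(e_i)_{S_i}^*=\{q\in\beta S_i: U\cap S_i\in q \text{ for all }U\in\tau_{e_i}\}$ recorded after the first definition, together with the standard formula $\widetilde{\pi}_i(p)=\{B\subseteq S_i:\pi_i^{-1}[B]\in p\}$ for the continuous extension of $\pi_i$. The forward inclusion is essentially bookkeeping, and the content is in the reverse inclusion, which I would establish by a finite-intersection-property construction.

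For $\widetilde{\pi}_i[e_S^*]\subseteq (e_i)_{S_i}^*$, fix $p\in e_S^*$ and a neighborhood $U\in\tau_{e_i}$. The cylinder $V=\{x\in T:x_i\in U\}$ is a neighborhood of $e$ in the product topology, so $V\cap S\in p$. Since $V\cap S=\{x\in S:x_i\in U\}=\pi_i^{-1}[U\cap S_i]$, this says exactly that $U\cap S_i\in\widetilde{\pi}_i(p)$. As $U$ was an arbitrary neighborhood of $e_i$, we get $\widetilde{\pi}_i(p)\in(e_i)_{S_i}^*$.

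For the reverse inclusion, fix $q\in(e_i)_{S_i}^*$. I would show that the family $\mathcal{F}=\{\pi_i^{-1}[B]:B\in q\}\cup\{V\cap S:V\in\tau_e\}$ has the finite intersection property; any ultrafilter $p\supseteq\mathcal{F}$ then lies in $e_S^*$ (it contains every $V\cap S$) and satisfies $q\subseteq\widetilde{\pi}_i(p)$, whence $q=\widetilde{\pi}_i(p)$ by maximality of $q$, giving $q\in\widetilde{\pi}_i[e_S^*]$. Because $q$ is a filter and $\tau_e$ is closed under finite intersections, checking the finite intersection property reduces to showing $\pi_i^{-1}[B]\cap(V\cap S)\neq\emptyset$ for a single $B\in q$ and a single $V\in\tau_e$. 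Here I would pass to a basic open box $W=\bigtimes_{j\in I}W_j\subseteq V$ with $e\in W$, where $W_j$ is a neighborhood of $e_j$ for the finitely many $j$ in a finite set $F\subseteq I$ and $W_j=T_j$ otherwise, and construct a point $x=\times_{j\in I}x_j\in S$ coordinatewise: choose $x_i\in B\cap W_i\cap S_i$, which is nonempty since $B\in q$ and $W_i\cap S_i\in q$ (taking $W_i=T_i$ if $i\notin F$, in which case $x_i\in B$ suffices); for each remaining $j\in F$ choose $x_j\in W_j\cap S_j$, nonempty because $e_j\in cl_{T_j}(S_j)$; and for $j\notin F\cup\{i\}$ choose any $x_j\in S_j$. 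Then $x\in W\cap S\subseteq V\cap S$ and $\pi_i(x)=x_i\in B$, so $x\in\pi_i^{-1}[B]\cap V\cap S$.

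The main obstacle is the reverse inclusion, and specifically the interaction with the product topology: a general neighborhood $V$ of $e$ need not be a product set, so I cannot directly match coordinates against $q$ and the $S_j$. The decisive point is that in the product topology every neighborhood of $e$ contains a basic box that equals $T_j$ in all but finitely many coordinates; this finiteness is exactly what lets the near-$e_j$ hypothesis $e_j\in cl_{T_j}(S_j)$ (which only guarantees $W_j\cap S_j\neq\emptyset$) do its job in the finitely many nontrivial coordinates $j\neq i$, while the single coordinate $i$ is controlled by the ultrafilter $q$. Once the finite intersection property is in hand, the rest of the argument is formal.
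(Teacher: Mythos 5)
Your proof is correct under the product-topology reading of the statement, and it follows essentially the same route as the paper: the forward inclusion by pulling back cylinder neighborhoods (the paper phrases this dually, via closures and continuity of $\pi_i$, using $e_S^*=\{p:e\in\bigcap_{A\in p}cl_T(A)\}$), and the reverse inclusion by extending $\{\pi_i^{-1}[B]:B\in q\}$ to an ultrafilter that still concentrates near $e$ --- the paper does exactly this by applying \cite[Theorem 3.11]{hs} to $\mathcal{A}=\{\pi_i^{-1}(A_i):A_i\in q\}$ and the partition regular family $\mathcal{R}=\{A\subseteq S:e\in cl_T(A)\}$, which is the same finite-intersection argument you run by hand.

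There is, however, one point you have wrong, and it matters for how the lemma gets used. The statement leaves the topology on $T=\bigtimes_{i\in I}T_i$ unspecified, and you defaulted to the product topology; but the paper invokes this lemma (in Lemma \ref{2.8}, Theorem \ref{2.9} and Lemma \ref{2.19}) with $T$ carrying the \emph{box} topology, where a neighborhood of $e$ need not contain any basic box with $W_j=T_j$ for all but finitely many $j$. As written, your reduction to a cofinitely-full box fails there, and your closing remark --- that this finiteness is ``exactly what lets the near-$e_j$ hypothesis do its job'' --- is a red herring: the finiteness is not only unnecessary but is what restricts your argument. The hypothesis $e_j\in cl_{T_j}(S_j)$ gives $W_j\cap S_j\neq\emptyset$ for \emph{every} $j\in I$ and every neighborhood $W_j$ of $e_j$, so you can choose $x_j\in W_j\cap S_j$ in all coordinates simultaneously (and $x_i\in B\cap W_i\cap S_i$, nonempty since both sets lie in the ultrafilter $q$), with no case distinction between full and proper factors. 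Deleting the passage to a cofinitely-full box and making the choice in every coordinate yields a proof valid for both topologies; this is precisely what the paper's terser argument does implicitly when it asserts that $e_i\in cl_{T_i}\big(\bigcap_{A_i\in\mathcal{F}}A_i\big)$ implies $e\in cl_T\big(\bigcap_{A_i\in\mathcal{F}}\pi_i^{-1}(A_i)\big)$.
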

\begin{proof}
For $p\in e_S^*$, we have $e\in\bigcap_{A\in p}cl_T(A)$.
Also, by \cite[Lemma 3.30]{hs}, $\widetilde{\pi}_{i}(p)=\{A_{i}\subseteq S_{i}:\pi_{i}^{-1}(A_{i})\in p\}$ where $\pi_i^{-1}(A_i)=\{x\in S:\pi_i(x)\in A_i\}\subseteq S$.
Thus, for each $A_{i}\in\widetilde{\pi}_{i}(p)$, we have $\pi_{i}^{-1}(A_{i})\in p$,
which implies $e\in cl_T(\pi_{i}^{-1}(A_{i}))$, i.e. $e_{i}\in cl_{T_{i}}(A_{i})$.
Hence $e_{i}\in\bigcap_{A_{i}\in\widetilde{\pi}_{i}(p)} cl_{T_{i}}(A_{i})$,
i.e. $\widetilde{\pi}_{i}(p)\in (e_i)_{S_i}^*$. Therefore $\widetilde{\pi}_{i}[e_S^*]\subseteq (e_i)_{S_i}^*$. 

To prove the reverse inclusion, let $p_{i}\in (e_i)_{S_i}^*$, i.e. $e_{i}\in\bigcap_{A_{i}\in p_{i}} cl_{T_{i}}(A_{i})$. Consider the family $\mathcal{A}:=\{\pi_i^{-1}(A_i):A_i\in p_i\}$ and the family $\mathcal{R}:=\{A\subseteq S: e\in cl_T(A)\}$. Then $\mathcal{R}$ is nonempty, $\emptyset\notin\mathcal{R}$, $\mathcal{R}^{\uparrow}:=\{B\subseteq S: A\subseteq B \text{ for some } A\in\mathcal{R}\}=\mathcal{R}$ and $\mathcal{R}$ is partition regular. Now, for any nonempty finite collection $\mathcal{F}$ of elements from $p_i$ we have that $\bigcap_{A_i\in\mathcal{F}}A_i\in p_i$, therefore $e_i\in cl_{T_i}\big(\bigcap_{A_i\in\mathcal{F}}A_i\big)$, which implies $e\in cl_T\big(\pi_i^{-1}\big(\bigcap_{A_i\in\mathcal{F}}A_i\big)\big)=cl_T\big(\bigcap_{A_i\in\mathcal{F}}\pi_i^{-1}(A_i)\big)$, and hence $\bigcap_{A_i\in\mathcal{F}}\pi_i^{-1}(A_i)\in\mathcal{R}$. So, by \cite[Theorem 3.11]{hs}, pick $p\in\beta S$ such that $\mathcal{A\subseteq}p\subseteq\mathcal{R}$.
Hence $p\in e_S^*$. Moreover, as $\mathcal{A}\subseteq p$, so for
$A_{i}\in p_{i}$, $\pi_{i}^{-1}(A_{i})\in p$, i.e. $A_{i}\in\widetilde{\pi}_{i}(p)$.
Thus $p_{i}\subseteq\widetilde{\pi}_{i}(p)$, and both being ultrafilters,
we have $p_{i}=\widetilde{\pi}_{i}(p)$. Therefore $\widetilde{\pi}_{i}[e_S^*]=(e_i)_{S_i}^*$.
\end{proof}
If for each $i\in I$, $X_i$ is a topological space, then recall that the box topology on $\bigtimes_{i\in I}X_i$ is the topology generated by the sets $\big\{\bigtimes_{i\in I}U_i:U_i \text{ is open in } X_i\big\}$.
\begin{lemma}\label{2.8}
Let $I$ be a set. For each $i\in I$, let $S_i$ be a near $e_i$ subsemigroup of a semitopological semigroup $T_i$. Let us consider the product semitopological semigroup $T=\bigtimes_{i\in I}T_i$ with box topology, let $S=\bigtimes_{i\in I}S_i$, and $e=\times_{i\in I}e_i$. Consider $\bigtimes_{i\in I}\beta S_i$ with product topology, and let $\tilde{\iota}:\beta S\rightarrow \bigtimes_{i\in I}\beta S_i$ be the continuous extension of the inclusion $\iota:S_d\rightarrow\bigtimes_{i\in I}\beta S_i$. Then $\tilde{\iota}[e_S^*]=\bigtimes_{i\in I}(e_i)_{S_i}^*$.
\end{lemma}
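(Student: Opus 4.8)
The plan is to reduce the statement to Lemma~\ref{2.7} together with one additional construction, exploiting the fact that a map into a product space is determined coordinatewise. First I would record that $\tilde{\iota}(p)=\times_{i\in I}\widetilde{\pi}_i(p)$ for every $p\in\beta S$. Indeed, if $P_i:\bigtimes_{j\in I}\beta S_j\to\beta S_i$ denotes the $i$th coordinate projection, which is continuous for the product topology, then $P_i\circ\tilde{\iota}$ is a continuous map from $\beta S$ into $\beta S_i$ agreeing with $\pi_i$ on $S$; by uniqueness of continuous extensions to the Stone-\v{C}ech compactification, $P_i\circ\tilde{\iota}=\widetilde{\pi}_i$. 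Since a point of the product is determined by its projections, $\tilde{\iota}(p)=\times_{i\in I}\widetilde{\pi}_i(p)$.

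Granting this, the forward inclusion is immediate: if $p\in e_S^*$, then by Lemma~\ref{2.7} each $\widetilde{\pi}_i(p)\in(e_i)_{S_i}^*$, so $\tilde{\iota}(p)=\times_{i\in I}\widetilde{\pi}_i(p)\in\bigtimes_{i\in I}(e_i)_{S_i}^*$.

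For the reverse inclusion I would fix $\times_{i\in I}q_i\in\bigtimes_{i\in I}(e_i)_{S_i}^*$ and construct a single $p\in e_S^*$ with $\widetilde{\pi}_i(p)=q_i$ for all $i$, mimicking the second half of the proof of Lemma~\ref{2.7} but imposing conditions at every coordinate at once. Set $\mathcal{A}=\{\pi_i^{-1}(A_i):i\in I,\ A_i\in q_i\}$ and $\mathcal{R}=\{A\subseteq S:e\in cl_T(A)\}$; exactly as in Lemma~\ref{2.7}, $\mathcal{R}$ is nonempty, omits $\emptyset$, is upward closed, and is partition regular. The crux is to verify that every finite intersection of members of $\mathcal{A}$ lies in $\mathcal{R}$. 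Grouping the chosen sets by their index and using that each $q_i$ is closed under finite intersections, such an intersection can be written as a cylinder $\bigtimes_{i\in I}C_i$, where $C_i=B_i\in q_i$ for the finitely many indices $i$ that occur and $C_i=S_i$ otherwise. I then check that an arbitrary basic box neighborhood $\bigtimes_{i\in I}U_i$ of $e$ meets this cylinder coordinatewise: $U_i\cap C_i\neq\emptyset$ because either $C_i=B_i\in q_i$ with $q_i\in(e_i)_{S_i}^*$ forcing $e_i\in cl_{T_i}(B_i)$, or $C_i=S_i$ with $S_i$ near $e_i$ giving $e_i\in cl_{T_i}(S_i)$. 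Hence $\bigtimes_{i\in I}(U_i\cap C_i)\neq\emptyset$, and since each factor lies in $S_i$ the chosen point lies in $S$; thus $e\in cl_T(\bigtimes_{i\in I}C_i)$ and the intersection is in $\mathcal{R}$. By \cite[Theorem 3.11]{hs} there is then $p\in\beta S$ with $\mathcal{A}\subseteq p\subseteq\mathcal{R}$, so $p\in e_S^*$; and $\mathcal{A}\subseteq p$ forces $q_i\subseteq\widetilde{\pi}_i(p)$, whence $q_i=\widetilde{\pi}_i(p)$ as both are ultrafilters. Therefore $\tilde{\iota}(p)=\times_{i\in I}q_i$, which finishes the reverse inclusion.

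The main obstacle is precisely this finite-intersection verification, and it is here that the box topology enters: a basic box neighborhood of $e$ constrains every coordinate by an open set $U_i$, so the nonemptiness of the intersection factors into the independent coordinatewise conditions $U_i\cap C_i\neq\emptyset$, each settled either by the nearness of $S_i$ or by the hypothesis $q_i\in(e_i)_{S_i}^*$. I expect no difficulty in the routine facts that $\mathcal{R}$ is partition regular (finiteness of the union gives $cl_T(A_1\cup A_2)=cl_T(A_1)\cup cl_T(A_2)$) and that $e$ is an idempotent of $T$ lying in $cl_T(S)\setminus S$, so that $e_S^*$ and the $(e_i)_{S_i}^*$ are the compact subsemigroups furnished by the earlier lemma and the objects in the statement are all well defined.
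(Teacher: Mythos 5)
Your proposal is correct and follows essentially the same route as the paper: the forward inclusion via the identity $\tilde{\iota}(p)=\times_{i\in I}\widetilde{\pi}_i(p)$ and Lemma \ref{2.7}, and the reverse inclusion by applying \cite[Theorem 3.11]{hs} to the partition regular family $\mathcal{R}=\{A\subseteq S:e\in cl_T(A)\}$. The only (immaterial) difference is that you generate the ultrafilter from the one-coordinate cylinders $\pi_i^{-1}(A_i)$, whereas the paper uses the full boxes $\bigtimes_{i\in I}A_i$; both families have all finite intersections in $\mathcal{R}$ by the same coordinatewise closure argument in the box topology, and both pin down $\widetilde{\pi}_i(p)=q_i$.
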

\begin{proof}
For each $i\in I$, if $\pi_i:S\rightarrow S_i$ is the natural projection homomorphism onto the $i^{th}$ coordinate, and if $\widetilde{\pi}_i:\beta S\rightarrow\beta S_i$ is the continuous extension of $\pi_i$, then for each $p\in\beta S$ we have that $\tilde{\pi}_i(p)=\tilde{\iota}(p)_i$, i.e. the $i^{th}$ coordinate of $\tilde{\iota}(p)$ is $\tilde{\pi}_i(p)$. Hence we get $\tilde{\iota}[e_S^*]\subseteq \bigtimes_{i\in I}\tilde{\iota}[e_S^*]_i=\bigtimes_{i\in I}\tilde{\pi}_i[e_S^*]=\bigtimes_{i\in I}(e_i)_{S_i}^*$, where the last eqality follows from Lemma \ref{2.7}.

Conversely, let $\times_{i\in I}p_i\in\bigtimes_{i\in I}(e_i)_{S_i}^*$, i.e. for each $i\in I$ we have $e_i\in\bigcap_{A_i\in p_i} cl_{T_i}(A_i)$. Consider the family $\mathcal{A}:=\{\bigtimes_{i\in I}A_i:A_i\in p_i \text{ for each } i\in I\}$ and the family $\mathcal{R}:=\{A\subseteq S:e\in cl_T(A)\}$. Then $\mathcal{R}$ is nonempty, $\emptyset\notin\mathcal{R}$, $\mathcal{R}^{\uparrow}:=\{B\subseteq S: A\subseteq B \text{ for some } A\in\mathcal{R}\}=\mathcal{R}$ and $\mathcal{R}$ is partition regular. Now, for any nonempty finite collection $\big\langle\bigtimes_{i\in I}A_i^{(n)}\big\rangle_{n=1}^k$ of elements from $\mathcal{A}$, we have that for each $i\in I$, $\bigcap_{n=1}^k A_i^{(n)}\in p_i$, therefore $e_i\in cl_{T_i}\big(\bigcap_{n=1}^k A_i^{(n)}\big)$, and this implies $e\in cl_T\big(\bigtimes_{i\in I}\big(\bigcap_{n=1}^k A_i^{(n)}\big)\big)=cl_T\big(\bigcap_{n=1}^k\big(\bigtimes_{i\in I}A_i^{(n)}\big)\big)$, i.e. $\bigcap_{n=1}^k\big(\bigtimes_{i\in I}A_i^{(n)}\big)\in\mathcal{R}$. Hence, by \cite[Theorem 3.11]{hs}, pick $p\in\beta S$ such that $\mathcal{A}\subseteq p\subseteq\mathcal{R}$. Then $p\in e_S^*$. Moreover, as $\mathcal{A}\subseteq p$, so for each $i\in I$, if $A_i\in p_i$, then $\pi_i^{-1}(A_i)\in\mathcal{A}\subseteq p$, and this implies $A_i\in\widetilde{\pi}_i(p)$. Thus $p_i\subseteq\widetilde{\pi}_i(p)$, and both being ultrafilters, we have $p_i=\widetilde{\pi}_i(p)$. Therefore we have $\tilde{\iota}[e_S^*]=\bigtimes_{i\in I}(e_i)_{S_i}^*$.
\end{proof}
\begin{theorem}\label{2.9}
Let $I$ be a set. For each $i\in I$, let $S_i$ be a near $e_i$ subsemigroup of a semitopological semigroup $T_i$ and let $p_i\in E(K(e_{S_i}^*))$. Let us consider the product semitopological semigroup $T=\bigtimes_{i\in I}T_i$ with box topology. Let $S=\bigtimes_{i\in I}S_i$, $e=\times_{i\in I}e_i$ and $p=\times_{i\in I}p_i$. Consider $\bigtimes_{i\in I}\beta S_i$ with product topology, and let $\tilde{\iota}:\beta S\rightarrow\bigtimes_{i\in I}\beta S_i$ be the continuous extension of the inclusion $\iota: S_d\rightarrow\bigtimes_{i\in I}\beta S_i$. Let $M=\tilde{\iota}^{-1}\{p\}\cap e_S^*$. Then $M$ is a compact subsemigroup of $e_S^*$ and $K(M)\subseteq K(e_S^*)$.
\end{theorem}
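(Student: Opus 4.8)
The plan is to establish the three assertions in turn, reserving the inclusion $K(M)\subseteq K(e_S^*)$ for last, as it is the only nontrivial point. First I would record the structural facts. Since each $p_i\in K\big((e_i)_{S_i}^*\big)\subseteq (e_i)_{S_i}^*$, we have $p=\times_{i\in I}p_i\in\bigtimes_{i\in I}(e_i)_{S_i}^*=\tilde{\iota}[e_S^*]$ by Lemma \ref{2.8}, so $M$ is nonempty. The map $\tilde{\iota}$ is a homomorphism: its $i^{th}$ coordinate is $\widetilde{\pi}_i$, which is the continuous extension of a homomorphism and hence a homomorphism by \cite[Corollary 4.22]{hs}, and a map into a product is a homomorphism exactly when each coordinate is. Because each $p_i$ is idempotent, $p+p=\times_{i\in I}(p_i+p_i)=p$, so $\tilde{\iota}^{-1}\{p\}$ is closed under $+$; intersecting with the subsemigroup $e_S^*$ shows $M$ is a subsemigroup. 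For compactness, note that $\bigtimes_{i\in I}\beta S_i$ is Hausdorff, so $\{p\}$ is closed, hence $\tilde{\iota}^{-1}\{p\}$ is closed by continuity of $\tilde{\iota}$, and its intersection with the compact set $e_S^*$ is compact.

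The heart of the argument is to produce an element of $M\cap K(e_S^*)$. Since $M$ is a nonempty compact right topological semigroup (being a closed subsemigroup of $\beta S$ in the subspace topology), it has a minimal idempotent $q\in E(K(M))$. I claim $q\in K(e_S^*)$; by \cite[Theorem 1.59]{hs} it suffices to show that $q$ is minimal among the idempotents of $e_S^*$ with respect to the order $r\leq q\iff r=qr=rq$. So suppose $r$ is an idempotent of $e_S^*$ with $r\leq q$. Applying the homomorphism $\widetilde{\pi}_i$, which maps $e_S^*$ onto $(e_i)_{S_i}^*$ by Lemma \ref{2.7}, yields an idempotent $\widetilde{\pi}_i(r)$ with $\widetilde{\pi}_i(r)\leq\widetilde{\pi}_i(q)=p_i$, the last equality holding because $q\in M$ forces $\widetilde{\pi}_i(q)=p_i$ for every $i$. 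As $p_i\in K\big((e_i)_{S_i}^*\big)$ is a minimal idempotent, this gives $\widetilde{\pi}_i(r)=p_i$ for each $i$, whence $\tilde{\iota}(r)=\times_{i\in I}\widetilde{\pi}_i(r)=p$ and so $r\in M$. But $q$ is minimal in $M$ and $r\leq q$, so $r=q$. This proves $q$ is minimal in $e_S^*$, hence $q\in M\cap K(e_S^*)$.

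Finally, since $M\cap K(e_S^*)\neq\emptyset$ and it is plainly an ideal of $M$ (for $x\in M\cap K(e_S^*)$ and $m\in M$, both $mx$ and $xm$ lie in $M$ as $M$ is a subsemigroup and in $K(e_S^*)$ as it is an ideal of $e_S^*$), the smallest ideal $K(M)$ satisfies $K(M)\subseteq M\cap K(e_S^*)\subseteq K(e_S^*)$; equivalently one may invoke \cite[Theorem 1.65]{hs} to obtain $K(M)=M\cap K(e_S^*)$. The main obstacle is the simultaneity over all $i\in I$: for infinite $I$ one cannot simply select, coordinate by coordinate, a preimage of $p$ lying in $K(e_S^*)$ and hope that the choices cohere into a single ultrafilter. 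The device that circumvents this is to start from an idempotent that is \emph{already} minimal in the fiber semigroup $M$ and then transport minimality through each projection $\widetilde{\pi}_i$ by means of the minimality of $p_i$; this handles all coordinates at once and is what makes the argument go through uniformly in $I$.
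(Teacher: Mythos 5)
Your proof is correct, but it reaches the crucial fact $M\cap K(e_S^*)\neq\emptyset$ by a genuinely different route than the paper. The paper's proof is structural and very short: having noted (as you do) that $\tilde{\iota}$ is a homomorphism and that $\tilde{\iota}[e_S^*]=\bigtimes_{i\in I}(e_i)_{S_i}^*$ by Lemma \ref{2.8}, it cites \cite[Exercise 1.7.3]{hs} to get $\tilde{\iota}[K(e_S^*)]=K\big(\bigtimes_{i\in I}(e_i)_{S_i}^*\big)$ and \cite[Theorem 2.23]{hs} to get $K\big(\bigtimes_{i\in I}(e_i)_{S_i}^*\big)=\bigtimes_{i\in I}K\big((e_i)_{S_i}^*\big)$; since $p$ lies in the latter set, some preimage of $p$ under $\tilde{\iota}|_{e_S^*}$ lies in $K(e_S^*)$, i.e.\ in $M\cap K(e_S^*)$, and then the same ideal argument you give (via \cite[Theorem 1.65]{hs}) finishes. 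So the ``coordinate-by-coordinate selection'' you describe as an obstacle is in fact exactly what the paper's machinery makes legitimate: the product theorem for smallest ideals plus surjectivity of $\tilde{\iota}|_{e_S^*}$ guarantee a single coherent preimage. Your alternative replaces these two citations by the theory of minimal idempotents: you take $q\in E(K(M))$, push the order relation $r\leq q$ through each $\widetilde{\pi}_i$, use minimality of each $p_i$ in $(e_i)_{S_i}^*$ (via \cite[Theorem 1.59]{hs}) to force $\widetilde{\pi}_i(r)=p_i$, hence $r\in M$, hence $r=q$, and conclude $q$ is minimal in $e_S^*$, so $q\in K(e_S^*)$. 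This is sound at every step: $M$ is a compact right topological semigroup, so $E(K(M))\neq\emptyset$; homomorphisms preserve idempotency and the order $\leq$; and the coordinates of $\tilde{\iota}$ are the $\widetilde{\pi}_i$. What your route buys is self-containedness (it avoids invoking the smallest-ideal-of-a-product theorem and the image-of-smallest-ideal exercise, using only the minimal-idempotent characterization and Theorem 1.65), and it directly exhibits an idempotent of $K(M)$ inside $K(e_S^*)$, which is the form used in Corollary \ref{2.10}; what it costs is length, since the paper's two citations dispatch the whole point in two lines.
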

\begin{proof}
We have that $\tilde{\iota}$ is a homomorphism by \cite[Corollary 4.22]{hs} and by Lemma \ref{2.8}, $\tilde{\iota}[e_S^*]=\bigtimes_{i\in I}(e_i)_{S_i}^*$. Consequently, $M$ is a compact subsemigroup of $e_S^*$. By \cite[Exercise 1.7.3]{hs}, $\tilde{\iota}[K(e_S^*)]=K(\bigtimes_{i\in I}(e_i)_{S_i}^*)$. Also, by \cite[Theorem 2.23]{hs}, $K(\bigtimes_{i\in I}e_{S_i}^*)=\bigtimes_{i\in I}K((e_i)_{S_i}^*)$ and so $p\in K(\bigtimes_{i\in I}(e_i)_{S_i}^*)$. Consequently, $K(e_S^*)\cap M\neq\emptyset$ and therefore $K(e_S^*)\cap M$ is an ideal of $M$ and so $K(M)\subseteq K(e_S^*)\cap M$.
\end{proof}
From now onwards whenever we say that $A\subseteq S$ is a large set near an idempotent $e$, we always mean that $A$ is a large set in $S$ near $e$, where "large" can be any of central, $central^*$, $IP$, $IP^*$, thick, piecewise syndetic and $PS^*$.
\begin{corollary}\label{2.10}
For each $i\in\{1,2\}$, let $S_i$ be a near $e_i$ subsemigroup of a semitopological
semigroups $T_i$. If for each $i\in\{1,2\}$, $A_i\subseteq S_i$ is central near $e_i$, then $A_1\times A_2\subseteq S_1\times S_2$ is a central set near $(e_{1}, e_{2})$.
\end{corollary}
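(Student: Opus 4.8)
The plan is to reduce the statement to Theorem \ref{2.9} applied with the index set $I=\{1,2\}$. Write $S=S_1\times S_2$ and $e=(e_1,e_2)$. First I would unpack the hypotheses: since each $A_i$ is central near $e_i$, Definition \ref{2.3} supplies an idempotent $p_i\in K(e_{S_i}^*)$ with $A_i\in p_i$; in particular $p_i\in E(K(e_{S_i}^*))$, which is exactly the input Theorem \ref{2.9} requires. Set $p=\times_{i\in\{1,2\}}p_i$ and $M=\tilde\iota^{-1}\{p\}\cap e_S^*$, with $\tilde\iota$ as in Lemma \ref{2.8} and Theorem \ref{2.9}.

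By Theorem \ref{2.9}, $M$ is a compact subsemigroup of $e_S^*$ and $K(M)\subseteq K(e_S^*)$. The structural input I would then invoke is that a compact right topological semigroup has idempotents in its smallest ideal: by \cite[Theorem 2.8]{hs}, $K(M)$ is the union of the minimal left ideals of $M$, each of which contains an idempotent. So I may pick an idempotent $q\in K(M)$. Since $K(M)\subseteq K(e_S^*)$, the element $q$ is an idempotent lying in $K(e_S^*)$, so by Definition \ref{2.3}(1) it only remains to check that $A_1\times A_2\in q$.

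The verification of $A_1\times A_2\in q$ is the one genuinely new step, and it is where I would spend the care. Because $q\in M$ we have $\tilde\iota(q)=p$, and as observed in the proof of Lemma \ref{2.8} the $i^{th}$ coordinate of $\tilde\iota(q)$ equals $\tilde\pi_i(q)$; hence $\tilde\pi_i(q)=p_i$ for $i\in\{1,2\}$. Now $A_i\in p_i=\tilde\pi_i(q)$, and by the description $\tilde\pi_i(q)=\{B\subseteq S_i:\pi_i^{-1}(B)\in q\}$ (from \cite[Lemma 3.30]{hs}, used already in Lemma \ref{2.7}) this yields $\pi_i^{-1}(A_i)\in q$. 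Since $\pi_1^{-1}(A_1)=A_1\times S_2$ and $\pi_2^{-1}(A_2)=S_1\times A_2$, and $q$ is an ultrafilter and hence closed under finite intersection, I conclude $A_1\times A_2=\pi_1^{-1}(A_1)\cap\pi_2^{-1}(A_2)\in q$. Thus $q$ is an idempotent in $K(e_S^*)$ containing $A_1\times A_2$, so $A_1\times A_2$ is central near $(e_1,e_2)$.

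I do not expect a serious obstacle: the heavy lifting — that $\tilde\iota$ intertwines the box-topology product with the coordinate projections, and that $K(M)\subseteq K(e_S^*)$ — is already done in Lemmas \ref{2.7}, \ref{2.8} and Theorem \ref{2.9}. The corollary is essentially an assembly of these facts, with the only fresh content being the coordinatewise membership argument of the previous paragraph, which is the mild point requiring attention.
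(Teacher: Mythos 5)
Your proposal is correct and follows essentially the same route as the paper's own proof: both invoke Theorem \ref{2.9} with $p=(p_1,p_2)$, pick an idempotent in $K(M)\subseteq K(e_S^*)$, and conclude $A_1\times A_2$ belongs to it. The only difference is that you spell out the final membership step ($\pi_1^{-1}(A_1)\cap\pi_2^{-1}(A_2)=A_1\times A_2\in q$ via the coordinate projections), which the paper leaves implicit in the line ``$\tilde{\iota}(r)=(p_1,p_2)$ and so $A_1\times A_2\in r$.''
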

\begin{proof}
For each $i\in\{1,2\}$, pick an idempotent $p_i\in K((e_i)_{S_i}^*)$ such that $A_i\in p_i$. Let $M$ be as
in Theorem \ref{2.9} and pick an idempotent $r\in K(M)$. Then $\tilde{\iota}(r)=(p_{1}, p_{2})$
and so $A_{1}\times A_{2}\in r$. Since $r\in K((e_1, e_2)_{S_1\times S_2}^*)$,
we have that $A_{1}\times A_{2}\subseteq S_1\times S_2$ is a central set near $(e_{1}, e_{2})$.
\end{proof}

The following notions will be used subsequently. 
\begin{definition}\label{2.11}
Let $S$ be a near $e$ subsemigroup of a semitopological semigroup $(T, +)$. Let $A\subseteq S$. Then $A$ is thick near $e$ if there exists $U\in\tau_{e}$ such that for each $F\in\mathcal{P}_{f}(U\cap S)$ and $V\in\tau_{e}$, there exists $x\in V\cap S$ such that $F+ x\subseteq A$.
\end{definition}

\begin{definition}\label{2.12}(\cite[Definition 3.3(b)]{tv})
Let $S$ be a near $e$ subsemigroup of a semitopological semigroup $(T,+)$. A subset $B$ of $S$ is syndetic near $e$ if for each $U\in\tau_e$, there exists some $F\in\mathcal{P}_f(U\cap S)$ and some $V\in\tau_e$ such that $S\cap V\subseteq \bigcup_{t\in F}(-t+B)$.
\end{definition}

\begin{definition}\label{2.13}
Let $S$ be a near $e$ subsemigroup of a semitopological semigroup $(T,+)$ where $e$ has a countable local base $\langle W_n\rangle_{n=1}^\infty$ in $T$, and let $A\subseteq S$.
 \begin{enumerate}
 \item \label{2.13(1)} The set $A$ is piecewise syndetic near $e$ if there exist sequences $\langle F_n\rangle_{n=1}^\infty$ and $\langle V_n\rangle_{n=1}^\infty$ such that
  \begin{enumerate}
  \item for each $n\in\mathbb{N}$, $F_n\in\mathcal{P}_f(W_n\cap S)$, $V_n\in\tau_e$, and $V_n\subseteq W_n$ and
  \item for each $G\in\mathcal{P}_f(S)$ and for each $O\in\tau_e$, there exists $x\in O\cap S$ such that for each $n\in\mathbb{N}$, $(G\cap V_n)+x\subseteq \bigcup_{t\in F_n}(-t+A)$.
  \end{enumerate}
 \item The set $A$ is $PS^*$ near $e$ if $A$ intersects all the piecewise syndetic sets near $e$.
 \end{enumerate}
\end{definition}

The following proposition shows that the definition of piecewise syndetic set near idempotent is independent of the choice of countable local base.

\begin{proposition}
Let $S$ be a near $e$ subsemigroup of a semitopological semigroup $(T,+)$. Let $\langle W_n^{(1)}\rangle_{n=1}^\infty$ and $\langle W_n^{(2)}\rangle_{n=1}^\infty$ be two countable local bases for $e$ in $T$. If $A\subseteq S$ is piecewise syndetic near $e$ with respect to $\langle W_n^{(1)}\rangle_{n=1}^\infty$, then it is also piecewise syndetic near $e$ with respect to $\langle W_n^{(2)}\rangle_{n=1}^\infty$. 
\end{proposition}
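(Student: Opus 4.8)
The plan is to exploit the fact that two countable local bases at the same point $e$ mutually refine one another, and then to reindex the sequences witnessing piecewise syndeticity rather than construct new ones from scratch. Recall the standing convention that both bases are nested, so $W_{n+1}^{(j)}\subseteq W_n^{(j)}$ for $j\in\{1,2\}$ and all $n$. First I would record the one refinement fact that is actually used: since $\langle W_n^{(1)}\rangle_{n=1}^\infty$ is a local base at $e$ and each $W_n^{(2)}$ lies in $\tau_e$, for every $n\in\mathbb{N}$ there is some $m(n)\in\mathbb{N}$ with $W_{m(n)}^{(1)}\subseteq W_n^{(2)}$. No monotonicity of the assignment $n\mapsto m(n)$ will be needed.

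Next, let $\langle F_n\rangle_{n=1}^\infty$ and $\langle V_n\rangle_{n=1}^\infty$ be the sequences witnessing that $A$ is piecewise syndetic near $e$ with respect to $\langle W_n^{(1)}\rangle_{n=1}^\infty$, as in Definition \ref{2.13}(\ref{2.13(1)}). I would define the candidate witnesses for the second base by pulling back along $m(\cdot)$: set $F_n':=F_{m(n)}$ and $V_n':=V_{m(n)}$ for each $n$. Condition (a) is then immediate, since $F_n'=F_{m(n)}\in\mathcal{P}_f(W_{m(n)}^{(1)}\cap S)\subseteq\mathcal{P}_f(W_n^{(2)}\cap S)$, while $V_n'=V_{m(n)}\in\tau_e$ satisfies $V_n'\subseteq W_{m(n)}^{(1)}\subseteq W_n^{(2)}$.

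It then remains to verify condition (b) of Definition \ref{2.13}(\ref{2.13(1)}) for $\langle F_n'\rangle_{n=1}^\infty$ and $\langle V_n'\rangle_{n=1}^\infty$. Given $G\in\mathcal{P}_f(S)$ and $O\in\tau_e$, I would apply condition (b) for the first base to the \emph{same} $G$ and $O$; this produces a single $x\in O\cap S$ for which $(G\cap V_m)+x\subseteq\bigcup_{t\in F_m}(-t+A)$ holds simultaneously for every $m\in\mathbb{N}$. Specializing $m=m(n)$ for each $n$ then yields $(G\cap V_n')+x=(G\cap V_{m(n)})+x\subseteq\bigcup_{t\in F_{m(n)}}(-t+A)=\bigcup_{t\in F_n'}(-t+A)$, which is exactly what condition (b) demands for the second base.

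The argument carries no genuine obstacle; the only point deserving care is that the witness $x$ furnished by the first base is \emph{uniform} over all indices, so the same $x$ may be reused after the reindexing $m\mapsto m(n)$. Because of this uniformity, neither the particular choice of the refining function $m(\cdot)$ nor any interaction between the two bases beyond the single inclusion $W_{m(n)}^{(1)}\subseteq W_n^{(2)}$ plays any further role. Interchanging the roles of $\langle W_n^{(1)}\rangle_{n=1}^\infty$ and $\langle W_n^{(2)}\rangle_{n=1}^\infty$ then gives the converse implication for free, so the notion of piecewise syndetic set near $e$ is indeed independent of the chosen countable local base.
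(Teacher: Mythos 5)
Your proof is correct and follows essentially the same route as the paper: reindex the witnessing sequences $\langle F_n\rangle$ and $\langle V_n\rangle$ along a refinement of the first base into the second, and observe that the uniformity of the witness $x$ in condition (b) makes the reindexing harmless. The only cosmetic difference is that the paper extracts a genuine subsequence $\langle W_{n_k}^{(1)}\rangle_{k=1}^\infty$ with $W_{n_k}^{(1)}\subseteq W_k^{(2)}$, whereas you use an arbitrary (not necessarily monotone) refinement function $m(\cdot)$, which, as you note, suffices.
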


\begin{proof}
Pick a subsequence $\langle W_{n_k}^{(1)}\rangle_{k=1}^\infty$ of $\langle W_n^{(1)}\rangle_{n=1}^\infty$ such that $W_{n_k}^{(1)}\subseteq W_k^{(2)}$ for each $k\in\mathbb{N}$. Pick $\langle F_n^{(1)}\rangle_{n=1}^\infty$ and $\langle V_n^{(1)}\rangle_{n=1}^\infty$ such that
 \begin{enumerate}
 \item for each $n\in\mathbb{N}$, $F_n^{(1)}\in\mathcal{P}_f\big(W_n^{(1)}\cap S\big)$, $V_n^{(1)}\in\tau_e$ and $V_n^{(1)}\subseteq W_n^{(1)}$, and
 \item for each $G\in\mathcal{P}_f(S)$ and for each $O\in\tau_e$, there exists $x\in O\cap S$ such that for each $n\in\mathbb{N}$, $\big(G\cap V_n^{(1)}\big)+x\subseteq \bigcup_{t\in F_n^{(1)}}(-t+A)$.
 \end{enumerate}
 
For each $k\in\mathbb{N}$, let $F_k^{(2)}=F_{n_k}^{(1)}$, and let $V_k^{(2)}=V_{n_k}^{(1)}$. Then it is easy to see that 
 \begin{enumerate}
 \item for each $k\in\mathbb{N}$, $F_k^{(2)}\in\mathcal{P}_f\big(W_k^{(2)}\cap S\big)$, $V_k^{(2)}\in\tau_e$ and $V_k^{(2)}\subseteq W_k^{(2)}$, and
 \item for each $G\in\mathcal{P}_f(S)$ and for each $O\in\tau_e$, there exists $x\in O\cap S$ such that for each $k\in\mathbb{N}$, $\big(G\cap V_k^{(2)}\big)+x\subseteq \bigcup_{t\in F_k^{(2)}}(-t+A)$.
 \end{enumerate}
Hence $A\subseteq S$ is piecewise syndetic near $e$ with respect to $\langle W_n^{(2)}\rangle_{n=1}^\infty$.
\end{proof}

\begin{proposition}
Let $S$ be a near $e$ subsemigroup of a semitopological semigroup $(T,+)$. If $A\subseteq S$ is thick near $e$, then $e\in cl_T(A)$.
\end{proposition}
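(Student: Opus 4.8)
The plan is to verify the closure condition directly: it suffices to show that every open neighborhood $W$ of $e$ meets $A$. So I fix an open $W\in\tau_e$ and, invoking thickness, fix the witnessing neighborhood $U\in\tau_e$ supplied by Definition \ref{2.11}. The goal is to manufacture a single point $s\in U\cap S$ together with a neighborhood $V\in\tau_e$ such that $s+x\in W$ for every $x\in V$; once this is in place, applying the thickness of $A$ to the singleton $F=\{s\}\in\mathcal{P}_f(U\cap S)$ and to $V$ yields some $x\in V\cap S$ with $F+x=\{s+x\}\subseteq A$, and then $s+x\in A\cap W$, as desired.

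The main obstacle is that $T$ is only \emph{semitopological}, so $+$ need not be jointly continuous, and one cannot conclude that $s+x$ is close to $e$ simply because $s$ and $x$ are both close to $e$. I would get around this with a two-step argument that crucially uses the idempotency $e+e=e$. First, continuity of the right translation $R_e$ together with $R_e(e)=e+e=e\in W$ produces a neighborhood $W_1\in\tau_e$ with $R_e(W_1)\subseteq W$, i.e. $s+e\in W$ for every $s\in W_1$. The point of doing $R_e$ first is precisely to fix the left argument $s$ so that the separate continuity of the remaining translation can then be used.

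Next, since $S$ is near $e$ we have $e\in cl_T(S)$, so the open neighborhood $W_1\cap\mathrm{int}_T(U)$ of $e$ meets $S$; I would choose $s$ in this intersection, which gives simultaneously $s\in U\cap S$ and $s+e\in W$. Now, with $s$ fixed, continuity of the left translation $L_s$ together with $L_s(e)=s+e\in W$ and the openness of $W$ furnish a neighborhood $V=L_s^{-1}(W)\in\tau_e$ satisfying $L_s(V)\subseteq W$, that is, $s+x\in W$ for all $x\in V$. This realizes exactly the configuration required in the first paragraph, so thickness delivers $x\in V\cap S$ with $s+x\in A\cap W$, whence $W\cap A\neq\emptyset$. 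As $W$ ranged over all open neighborhoods of $e$, we conclude $e\in cl_T(A)$.
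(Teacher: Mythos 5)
Your proof is correct and is essentially the paper's own argument: both use the idempotency $e+e=e$ with continuity of the right translation $R_e$ to locate a point $s\in S\cap U$ close to $e$, then continuity of the left translation $L_s$ to produce $V$, and finally thickness on the singleton $\{s\}$. The only difference is presentational — you argue directly that every open neighborhood of $e$ meets $A$, while the paper phrases the same steps as a proof by contradiction.
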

\begin{proof}
Pick $U\in\tau_e$ as guaranteed for $A$ by the definition. Suppose that $e\notin cl_T(A)$ and pick $W\in\tau_e$ such that $W\cap A=\emptyset$. We may assume that $W$ is open. Now $e+e\in W$ and $T$ is a semitopological so pick $X\in\tau_e$ such that $X+e\subseteq W$. Since $S$ is a near $e$ subsemigroup of $T$, pick $a\in S\cap U\cap X$. Pick $V\in\tau_e$ such that $a+V\subseteq W$. Pick $x\in V\cap S$ such that $a+x\in A$. Then $a+x\in W\cap A$, a contradiction.
\end{proof}

The following proposition provides a natural class of thick sets near idempotent.

\begin{proposition}
Let $S$ be a near $e$ subsemigroup of a semitopological semigroup $(T,+)$. If $U\in\tau_e$, then $U\cap S\subseteq S$ is a thick set near $e$. 
\end{proposition}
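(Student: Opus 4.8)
The plan is to verify Definition \ref{2.11} directly: I will produce a witness neighborhood $U'\in\tau_e$ so that for every $F\in\mathcal{P}_f(U'\cap S)$ and every $V\in\tau_e$ there is a translate $x\in V\cap S$ with $F+x\subseteq U\cap S$. Since thickness clearly passes to supersets and $\mathrm{int}_T(U)\cap S\subseteq U\cap S$, I may replace $U$ by $\mathrm{int}_T(U)$ and assume throughout that $U$ is open. The key constraint is that $T$ is only semitopological, so only the \emph{separate} continuity of the translations is available; the whole argument is organized around using $R_e$ once and the maps $L_a$ finitely often, glued together by the idempotency $e+e=e$.

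First I would fix the witness neighborhood. Since $R_e$ is continuous and $R_e(e)=e+e=e\in U$ with $U$ open, the set $U':=R_e^{-1}[U]$ is an open neighborhood of $e$ satisfying $U'+e\subseteq U$. The point of this choice is that every $a\in U'$ already satisfies $a+e\in U$, which is exactly what feeds the next (left-translation) step.

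Next, fix $F\in\mathcal{P}_f(U'\cap S)$ and $V\in\tau_e$. For each $a\in F$ we have $a+e\in U$, so by continuity of $L_a$ at $e$ and openness of $U$ the set $V_a:=L_a^{-1}[U]$ is a neighborhood of $e$ with $a+V_a\subseteq U$. Put $V':=V\cap\bigcap_{a\in F}V_a$; because $F$ is finite this is still a member of $\tau_e$, and since $e\in cl_T(S)$ the neighborhood $V'$ meets $S$. Choosing $x\in V'\cap S$, I get $a+x\in U$ for each $a\in F$ (as $x\in V_a$) and $a+x\in S$ (as $S$ is a subsemigroup), whence $F+x\subseteq U\cap S$ while $x\in V\cap S$, as required.

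The main obstacle is exactly the absence of joint continuity: one cannot invoke continuity of $(a,x)\mapsto a+x$ at $(e,e)$ to clear all of $F$ at once. The remedy is the two-stage translation above --- the ``right'' contribution is absorbed once and for all into the witness $U'$ through $R_e$ and $e+e=e$, after which only the finitely many left translations $L_a$ remain, and finiteness of $F$ guarantees that $\bigcap_{a\in F}V_a$ stays a neighborhood of $e$. I expect no further difficulties; the subsemigroup property and $e\in cl_T(S)$ dispose of the routine ``$\cap S$'' and nonemptiness points.
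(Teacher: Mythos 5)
Your proof is correct and follows essentially the same route as the paper's: both exploit $e+e=e\in U$ and the continuity of the right translation $R_e$ to obtain the witness neighborhood ($W$ in the paper, your $U'=R_e^{-1}[U]$) with $W+e\subseteq U$, and then use continuity of the finitely many left translations $L_a$, $a\in F$, to shrink $V$ to a neighborhood whose intersection with $S$ yields the required $x$. The only (harmless) difference is that you spell out explicitly why this intersection meets $S$ (via $e\in cl_T(S)$) and why one may assume $U$ open, points the paper leaves implicit.
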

\begin{proof}
Without loss of generality we assume that $U$ is open. Now $e+e\in U$ and $T$ is semitopological so pick $W\in\tau_e$ such that $W+e\subseteq U$. For given $F\in\mathcal{P}_f(W\cap S)$ and $V\in\tau_e$, we pick $V_w\in\tau_e$ for each $w\in F\cap W$ such that $w+V_w\subseteq U$. For any chosen $x\in \big(\bigcap_{w\in F\cap W}V_w\big)\cap S\cap V$ we then have $F+x\subseteq U$, i.e. $F+x\subseteq U\cap S$. Hence $U\cap S\subseteq S$ is thick near $e$.
\end{proof}

The next two lemmas provide algebraic analogues of thick set near idempotent and piecewise syndetic set near idempotent.
\begin{lemma}\label{2.17}
Let $S$ be a near $e$ subsemigroup of a semitopological semigroup $(T, +)$. If $A\subseteq S$ is thick near $e$, then there exists a left ideal $L$ of $e_S^*$ such that $L\subseteq cl_{\beta S}A$.
\end{lemma}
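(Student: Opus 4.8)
The plan is to localize the classical equivalence between thick sets and left ideals: a thick set $A$ should have $cl_{\beta S}A$ containing a principal left ideal $e_S^*+q$ generated by a single ultrafilter $q\in e_S^*$ that absorbs $A$ on the left. The near-$e$ hypotheses enter through the defining property $U\cap S\in p$ for every $p\in e_S^*$, which will replace the equality $S\in p$ used in the ordinary ($\beta S$) case.

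First I fix $U\in\tau_e$ as furnished by Definition \ref{2.11}, and I consider the family
$$\mathcal{B}=\{V\cap S:V\in\tau_e\}\cup\{-t+A:t\in U\cap S\}.$$
I claim $\mathcal{B}$ has the finite intersection property. Given finitely many of its members, let $V\in\tau_e$ be the intersection of the finitely many chosen neighborhoods and let $F\in\mathcal{P}_f(U\cap S)$ collect the chosen translation indices, enlarged by one element of the nonempty set $U\cap S$ so that $F\neq\emptyset$. By thickness there is $x\in V\cap S$ with $F+x\subseteq A$; then $x$ lies in each chosen $V\cap S$ and in each chosen $-t+A$, so the intersection of the selected members is nonempty. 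By \cite[Theorem 3.11]{hs} I then pick $q\in\beta S$ with $\mathcal{B}\subseteq q$. Since $V\cap S\in q$ for every $V\in\tau_e$, the description $e_S^*=\{p\in\beta S:U\cap S\in p\text{ for each }U\in\tau_e\}$ gives $q\in e_S^*$; and by construction $-t+A\in q$ for every $t\in U\cap S$.

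Finally I set $L=e_S^*+q$. Because $e_S^*$ is a subsemigroup of $\beta S$ and $q\in e_S^*$, we have $L\subseteq e_S^*$, and for $r\in e_S^*$ and $p+q\in L$ one has $r+(p+q)=(r+p)+q\in L$, so $L$ is a left ideal of $e_S^*$. To see $L\subseteq cl_{\beta S}A$ I check that $A\in p+q$ for every $p\in e_S^*$: by the definition of the operation on $\beta S$ this is the statement $\{s\in S:-s+A\in q\}\in p$, and since $-t+A\in q$ for all $t\in U\cap S$ we have $U\cap S\subseteq\{s\in S:-s+A\in q\}$, while $U\cap S\in p$ because $p\in e_S^*$; upward closure of $p$ then finishes it.

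I expect the finite intersection property of $\mathcal{B}$ to be the only step needing genuine care, since it is here that the two distinct roles of $\tau_e$ must be kept apart: the fixed witness $U$ constrains the translation indices $t$, while the free neighborhoods $V$ are exactly what forces $q$ into $e_S^*$. Once $q$ is produced, the remainder is the standard ``left ideal through $q$'' computation, adapted only by using $e_S^*$ in place of $\beta S$ and $U\cap S\in p$ in place of $S\in p$.
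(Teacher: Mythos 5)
Your proof is correct and follows essentially the same route as the paper: both construct an ultrafilter in $e_S^*$ containing every translate $-t+A$ with $t\in U\cap S$ (your finite intersection check for $\mathcal{B}$ is exactly the paper's verification that $e\in cl_T\big(\bigcap_{s\in F}(-s+A)\big)$), and then show the principal left ideal $e_S^*+q$ lies in $cl_{\beta S}A$ via $U\cap S\subseteq\{s\in S:-s+A\in q\}$. The only cosmetic difference is that you adjoin the neighborhood traces $\{V\cap S:V\in\tau_e\}$ to the family and invoke plain FIP extension, whereas the paper uses the two-family form of \cite[Theorem 3.11]{hs} with the partition regular family $\mathcal{R}=\{B\subseteq S:e\in cl_T(B)\}$.
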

\begin{proof}
Pick $U\in\tau_e$ such that for every $F\in\mathcal{P}_f(U\cap S)$ and every $V\in\tau_e$, there exists $x\in V\cap S$ such that $F+x\subseteq A$. Consider the family $\mathcal{A}:=\{-s+A:s\in U\cap S\}$ and the family $\mathcal{R}:=\{B\subseteq S:e\in cl_{T}(B)\}$. Then $\mathcal{R}$ is nonempty, $\emptyset\notin\mathcal{R}$, $\mathcal{R}^{\uparrow}:=\{C\subseteq S:B\subseteq C\text{ for some }B\in\mathcal{R}\}=\mathcal{R}$ and $\mathcal{R}$ is partition regular. We claim that for every $F\in\mathcal{P}_f(U\cap S)$, $\bigcap_{s\in F}(-s+A)\in\mathcal{R}^{\uparrow}$. For $V\in\tau_e$ we pick $x\in V\cap S$ such that $F+x\subseteq A$, so we get $x\in\bigcap_{s\in F}(-s+A)$, and therefore $\big(\bigcap_{s\in F}(-s+A)\big)\cap V\neq\emptyset$, i.e. $e\in cl_T\big(\bigcap_{s\in F}(-s+A)\big)$. This completes the proof of the above claim, and hence pick by \cite[Theorem 3.11]{hs} an ultrafilter $p\in\beta S$ such that $\mathcal{A}\subseteq p\subseteq\mathcal{R}$. Then $p\in e_S^*$. Now we will show that the left ideal $e_S^* +p$ of $e_S^*$ is contained in $cl_{\beta S}A$, i.e. for each $q\in e_S^*$, $A\in q+p$. We know that $A\in q+p$ if and only if $\{s\in S:-s+A\in p\}\in q$.
Now, as $\mathcal{A}\subseteq p$, so $U\cap S\subseteq\{s\in S:-s+A\in p\}$.
But $q\in e_S^*$ implies $U\cap S\in q$, and we have $\{s\in S:-s+A\in p\}\in q$, i.e. $A\in q+p$.
\end{proof}

Note that from the above lemma it is immediate that any thick set near idempotent is a central set near idempotent. The next lemma is generalization of \cite[Theorem 3.5]{hl}.

\begin{lemma}\label{2.18}
Let $S$ be a near $e$ subsemigroup of a semitopological semigroup $(T,+)$ where $e$ has a countable local base $\langle W_n\rangle_{n=1}^\infty$ in $T$, and let $A\subseteq S$. Then $A\subseteq S$ is piecewise syndetic near $e$ if and only if $K(e_S^*)\cap cl_{\beta S}A\neq\emptyset$.
\end{lemma}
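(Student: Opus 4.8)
The plan is to establish both implications inside the algebra of $e_S^*$, with the recurring difficulty being that the translating elements appearing in Definition \ref{2.13} live in $S$ (merely near $e$) rather than in $e_S^*$, so that naive products leave $e_S^*$ and miss $K(e_S^*)$. For the forward implication, suppose $A$ is piecewise syndetic near $e$ with witnessing sequences $\langle F_n\rangle_{n=1}^\infty$ and $\langle V_n\rangle_{n=1}^\infty$, and write $B_n=\bigcup_{t\in F_n}(-t+A)$. First I would reformulate condition (2) of Definition \ref{2.13}(\ref{2.13(1)}): for a fixed finite $G$ only finitely many $n$ satisfy $G\cap V_n\neq\emptyset$ (as $\langle W_n\rangle$ decreases to $e\notin S$), so the condition says exactly that $e\in cl_T\big(\bigcap_n\bigcap_{g\in G\cap V_n}(-g+B_n)\big)$ for every $G\in\mathcal P_f(S)$. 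Feeding this downward-directed family into \cite[Theorem 3.11]{hs}, against $\mathcal R=\{B:e\in cl_T B\}$ exactly as in Lemma \ref{2.17}, produces $p\in e_S^*$ with $-g+B_n\in p$ for all $n$ and all $g\in V_n\cap S$. Since $V_n\cap S\in q$ for every $q\in e_S^*$, the inclusion $V_n\cap S\subseteq\{s:-s+B_n\in p\}$ gives $B_n\in q+p$, so the left ideal $e_S^*+p$ of $e_S^*$ lies inside $\bigcap_n cl_{\beta S}B_n$.

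Next I would pick a minimal left ideal contained in $e_S^*+p$ and an idempotent $u$ in it, so that $u\in K(e_S^*)$ and $B_n\in u$ for every $n$. Because $F_n$ is finite, for each $n$ there is $t_n\in F_n\subseteq W_n\cap S$ with $-t_n+A\in u$, i.e. $A\in t_n+u$. The decisive step is to pass to a convergent subnet of $\langle t_n+u\rangle_n$ with limit $r$. Since $t_n\to e$ and $R_e$ is continuous, $t_n+e\to e$; continuity of $L_{t_n}$ then yields $W_m\cap S\in t_n+u$ for all large $n$, forcing $r\in\bigcap_m cl_{\beta S}(W_m\cap S)=e_S^*$, while $A\in t_n+u$ for all $n$ gives $r\in cl_{\beta S}A$. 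Finally, applying the continuous right translation $\,\cdot\,{+}\,u$ to the subnet and using $u+u=u$ shows $r+u=r$, whence $r=r+u\in e_S^*+K(e_S^*)\subseteq K(e_S^*)$; thus $r\in K(e_S^*)\cap cl_{\beta S}A$.

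For the reverse implication, fix $p\in K(e_S^*)\cap cl_{\beta S}A$ and set $A_p=\{s\in S:-s+A\in p\}$. I would reduce everything to the claim that $A_p$ is syndetic near $e$. Granting this, Definition \ref{2.12} applied to each $W_n$ supplies $F_n\in\mathcal P_f(W_n\cap S)$ and $V_n\in\tau_e$ with $V_n\subseteq W_n$ and $V_n\cap S\subseteq\bigcup_{t\in F_n}(-t+A_p)$; these are the required sequences. Indeed, for $g\in V_n\cap S$ there is $t\in F_n$ with $t+g\in A_p$, i.e. $-(t+g)+A\in p$, so $-g+B_n=\bigcup_{t\in F_n}(-(t+g)+A)\in p$. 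Intersecting over the finitely many relevant pairs $(n,g)$ keeps the set in $p\in e_S^*$, so $e$ lies in its $T$-closure, and a suitable $x\in O\cap S$ in this intersection verifies condition (2).

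It remains to prove the claim, which I expect to be the main obstacle. Suppose $A_p$ were not syndetic near $e$; then there is $U_0\in\tau_e$ such that the sets $S\setminus(-t+A_p)$ for $t\in U_0\cap S$, together with $\{U\cap S:U\in\tau_e\}$, have all finite intersections meeting every neighborhood of $e$, so by \cite[Theorem 3.11]{hs} there is $q\in e_S^*$ with $-t+A_p\notin q$ for every $t\in U_0\cap S$. Since $p$ lies in the minimal left ideal $L=e_S^*+p$ and $q+p\in L$, minimality gives $e_S^*+(q+p)=L$, so some $w\in e_S^*$ satisfies $w+q+p=p$. Then $A\in p$ forces $\{s:-s+A\in q+p\}\in w$, and intersecting with $U_0\cap S\in w$ produces $s_0\in U_0\cap S$ with $-s_0+A\in q+p$; unwinding the definition of $q+p$ and using $-y+(-s_0+A)=-(s_0+y)+A$ gives $-s_0+A_p\in q$, contradicting the choice of $q$. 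Hence $A_p$ is syndetic near $e$. The hard part throughout is exactly this interplay between near-$e$ translates in $S$ and genuine membership in $K(e_S^*)$, resolved in the forward direction by the subnet identity $r+u=r$ and in the reverse direction by the minimality argument that localizes the witnessing translate $s_0$ into $U_0\cap S$.
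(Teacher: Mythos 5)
Your proof is correct, and its skeleton coincides with the paper's: in the forward direction both arguments build, from the witnessing sequences, an ultrafilter $p\in e_S^*$ such that $e_S^*+p\subseteq\bigcap_{n}cl_{\beta S}\bigl(\bigcup_{t\in F_n}(-t+A)\bigr)$ (the paper via the sets $C(G,O)$ with the finite intersection property, you via \cite[Theorem 3.11]{hs} applied to the sets $D_G$ --- the same device, packaged differently), then take a member of $K(e_S^*)$ in this left ideal, extract $t_n\in F_n$ with $-t_n+A$ in it, and exploit $t_n\to e$; in the reverse direction both reduce to syndeticity near $e$ of $\{s\in S:-s+A\in p\}$ and assemble $\langle F_n\rangle$, $\langle V_n\rangle$ from the countable base in the same way. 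You differ in two local respects. First, at the end of the forward direction the paper simply picks a cluster point $q\in e_S^*\cap cl_{\beta S}(\{t_n:n\in\mathbb{N}\})$ of the $t_n$ and observes that $A\in q+r\in K(e_S^*)$, since $\{t_n:n\in\mathbb{N}\}\subseteq\{t\in S:-t+A\in r\}$; you instead require the element $u$ of $K(e_S^*)$ to be idempotent, take a limit $r$ of a subnet of $\langle t_n+u\rangle_n$, and verify separately that $r\in e_S^*$ (via the semitopological continuity of $R_e$ and $L_{t_n}$), that $r\in cl_{\beta S}A$, and that $r=r+u\in K(e_S^*)$. Your route is valid but strictly more work; the paper's cluster-point argument needs neither idempotency nor the continuity of translations in $T$. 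Second, the paper quotes \cite[Theorem 3.4]{tv} for the syndeticity near $e$ of $\{s\in S:-s+A\in p\}$, whereas you reprove it from scratch by the minimal-left-ideal argument ($L=e_S^*+p$, solving $w+q+p=p$, and localizing the translate $s_0$ in $U_0\cap S$); this is exactly the standard proof of the quoted theorem, so your version is self-contained at the cost of length, while the paper's is shorter by citation. There is no gap in either direction of your argument.
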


\begin{proof}
Necessity. Pick $\langle F_n\rangle_{n=1}^\infty$ and $\langle V_n\rangle_{n=1}^\infty$ satisfying conditions (a) and (b) of Definition \ref{2.13}(\ref{2.13(1)}). Given $G\in\mathcal{P}_f(S)$ and $O\in\tau_e$, let $$C(G,O)=\{x\in O\cap S: \text{for all } n\in\mathbb{N},\,(G\cap V_n)+x\subseteq\bigcup_{t\in F_n}(-t+A)\}.$$
By assumption, each $C(G,O)\neq\emptyset$. Further, given $G_1$ and $G_2$ in $\mathcal{P}_f(S)$ and $O_1,O_2\in\tau_e$ we have that $C(G_1\cup G_2,O_1\cap O_2)\subseteq C(G_1,O_1)\cap C(G_2,O_2)$ so $\{C(G,O):G\in\mathcal{P}_f(S) \text{ and } O\in\tau_e\}$ has the finite intersection property so pick $p\in\beta S$ with $\{C(G,O):G\in\mathcal{P}_f(S) \text{ and } O\in\tau_e\}\subseteq p$. Note that since each $C(G,O)\subseteq O\cap S$, so $p\in e_S^*$.

Now we claim that for each $n\in\mathbb{N}$, $e_S^*+p\subseteq cl_{\beta S}\big(\bigcup_{t\in F_n}(-t+A)\big)$, so let $n\in\mathbb{N}$, and let $q\in e_S^*$. To show that $\bigcup_{t\in F_n}(-t+A)\in q+p$, we show that $$V_n\cap S\subseteq \big\{y\in S:-y+\bigcup_{t\in F_n}(-t+A)\in p\big\}.$$
So let $y\in V_n\cap S$. Then $C(\{y\},V_n)\in p$ and $C(\{y\},V_n)\subseteq -y+\bigcup_{t\in F_n}(-t+A)$.

Now pick $r\in (e_S^*+p)\cap K(e_S^*)$ (since $e_S^*+p$ is a left ideal of $e_S^*$). Given $n\in\mathbb{N}$, $\bigcup_{t\in F_n}(-t+A)\in r$ so pick $t_n\in F_n$ such that $-t_n+A\in r$. Now each $t_n\in F_n\subseteq W_n$ so $\lim_{n\to\infty}t_n=e$ so pick $q\in e_S^*\cap cl_{\beta S}(\{t_n:n\in\mathbb{N}\})$. Then $q+r\in K(e_S^*)$ and $\{t_n:n\in\mathbb{N}\}\subseteq \{t\in S:-t+A\in r\}$ so $A\in q+r$.

Sufficiency. Pick $p\in K(e_S^*)\cap cl_{\beta S}A$. Let $B=\{x\in S:-x+A\in p\}$. By \cite[Theorem 3.4]{tv}, $B$ is syndetic near $e$. Inductively for each $n\in\mathbb{N}$ pick $F_n\in\mathcal{P}_f(W_n\cap S)$ and $V_n\in\tau_e$ (with $V_n\subseteq W_n$ and $V_{n+1}\subseteq V_n$) such that $S\cap V_n\subseteq \bigcup_{t\in F_n}(-t+B)$.

Let $G\in\mathcal{P}_f(S)$ be given. If $G\cap V_1=\emptyset$, the conclusion is trivial, so assume $G\cap V_1\neq\emptyset$ and let $H=G\cap V_1$. For each $y\in H$, let $m(y)=\max\{n\in\mathbb{N}:y\in V_n\}$. For each $y\in H$ and each $n\in\{1,2,\ldots,m(y)\}$, we have $y\in\bigcup_{t\in F}(-t+B)$ so pick $t(y,n)\in F_n$ such that $y\in -t(y,n)+B$. Then given $y\in H$ and $n\in\{1,2,\ldots,m(y)\}$, we have that $-(t(y,n)+y)+A\in p$.

Now let $O\in\tau_e$ be given. Then $O\in p$ so pick $$x\in O\cap\bigcap_{y\in H}\bigcap_{n=1}^{m(y)}(-(t(y,n)+y)+A).$$
Then given $n\in\mathbb{N}$ and $y\in G\cap V_n$, we have that $y\in H$ and $n\leq m(y)$, so $t(y,n)+y+x\in A$, so $y+x\in -t(y,n)+A\subseteq \bigcup_{t\in F_n}(-t+A)$.
\end{proof}

If $S$ is a near $e$ subsemigroup of a semitopological semigroup $T$ where $e$ has a countable local base in $T$, then the above lemma provides that any central set in $S$ near $e$ is a piecewise syndetic set in $S$ near $e$. Moreover, Theorem \ref{Theorem 2.8} provides that any central set in $S$ near $e$ is also an $IP$ set in $S$ near $e$.

\begin{lemma}\label{2.19}
Let $I$ be a set. For $i\in I$, let $S_i$ be a near $e_i$ subsemigroup of a semitopological semigroup $(T_i, +)$ and let $A_i\subseteq S_i$. Let us consider product semitopological semigroup $T=\bigtimes_{i\in I}T_i$ with box topology. Let $S=\bigtimes_{i\in I}S_i, \,A=\bigtimes_{i\in I}A_i$, and $e=\times_{i\in I}e_i$.
\begin{enumerate}
 \item If for each $i\in I$, $A_i\subseteq S_i$ is thick near $e_i$, then $A\subseteq S$ is thick near $e$.
 \item If $A\subseteq S$ is central near $e$, then $A_i\subseteq S_i$ is central near $e_i$ for each $i\in I$.
 \item If $e$ has a countable local base in $T$ and if $A\subseteq S$ is piecewise syndetic near $e$, then $A_i\subseteq S_i$ is piecewise syndetic near $e_i$ for each $i\in I$.
\end{enumerate}
\end{lemma}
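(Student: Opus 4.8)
The plan is to handle all three parts through the projection homomorphisms $\widetilde{\pi}_i:\beta S\to\beta S_i$, reserving a hands-on topological argument for the thickness assertion in (1). First I would set up the common bookkeeping. By Lemma~\ref{2.7} each $\widetilde{\pi}_i$ restricts to a continuous surjective homomorphism of $e_S^*$ onto $(e_i)_{S_i}^*$, so by \cite[Exercise~1.7.3]{hs} it carries smallest ideals onto smallest ideals, giving $\widetilde{\pi}_i[K(e_S^*)]=K((e_i)_{S_i}^*)$. I would also record the elementary containment that, for any $A_i\subseteq S_i$, the product $A=\bigtimes_{j\in I}A_j$ lies inside $\pi_i^{-1}(A_i)$; hence whenever $A\in p$ for some $p\in\beta S$, the filter property forces $\pi_i^{-1}(A_i)\in p$, i.e. $A_i\in\widetilde{\pi}_i(p)$.

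For part (1) I would avoid ultrafilters and work directly from Definition~\ref{2.11}. Taking the witnessing neighborhoods $U_i\in\tau_{e_i}$ for each $A_i$, set $U=\bigtimes_{i\in I}U_i$, which is a genuine neighborhood of $e$ precisely because $T$ carries the box topology. Given $F\in\mathcal{P}_f(U\cap S)$ and $V\in\tau_e$, shrink $V$ to a basic box neighborhood $\bigtimes_{i\in I}V_i\subseteq V$ with $V_i\in\tau_{e_i}$, and observe that each projection $\pi_i[F]$ lies in $\mathcal{P}_f(U_i\cap S_i)$. Applying thickness of $A_i$ coordinatewise produces $x_i\in V_i\cap S_i$ with $\pi_i[F]+x_i\subseteq A_i$; assembling $x=\times_{i\in I}x_i$ yields $x\in V\cap S$ and $F+x\subseteq\bigtimes_{i\in I}A_i=A$, as required.

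For part (2), pick an idempotent $p\in K(e_S^*)$ with $A\in p$. Then $\widetilde{\pi}_i(p)$ is idempotent (the image of an idempotent under a homomorphism), lies in $K((e_i)_{S_i}^*)$ by the ideal transfer above, and satisfies $A_i\in\widetilde{\pi}_i(p)$ by the containment observation; thus $A_i$ is central near $e_i$. Part (3) needs the most care, and I would route it through the algebraic characterization in Lemma~\ref{2.18}. The preliminary subtlety is that ``piecewise syndetic near $e_i$'' is even defined only when $e_i$ has a countable local base; this I would extract from the hypothesis on $e$ using that each $\pi_i:T\to T_i$ is open in the box topology, so the $\pi_i$-images of a countable box-neighborhood base at $e$ form a countable local base at $e_i$. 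With that secured, Lemma~\ref{2.18} applied to $S$ furnishes $p\in K(e_S^*)\cap cl_{\beta S}A$; the ideal transfer places $\widetilde{\pi}_i(p)$ in $K((e_i)_{S_i}^*)$, while $A\in p$ gives $A_i\in\widetilde{\pi}_i(p)$, i.e. $\widetilde{\pi}_i(p)\in cl_{\beta S_i}A_i$. Hence $K((e_i)_{S_i}^*)\cap cl_{\beta S_i}A_i\neq\emptyset$, and Lemma~\ref{2.18} applied to $S_i$ concludes that $A_i$ is piecewise syndetic near $e_i$. The main obstacle throughout is topological rather than algebraic: every argument silently exploits that $T$ carries the box and not the product topology---explicitly in part (1) to make $\bigtimes_{i\in I}U_i$ a neighborhood, and in part (3) to push a countable local base down to each factor.
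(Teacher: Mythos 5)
Your proposal is correct, and parts (1) and (2) coincide with the paper's own argument: the same coordinatewise construction for thickness, and the same route $\widetilde{\pi}_i|_{e_S^*}$ surjective homomorphism $\Rightarrow$ $\widetilde{\pi}_i[K(e_S^*)]=K((e_i)_{S_i}^*)$ via Lemma \ref{2.7} and \cite[Exercise 1.7.3]{hs}, plus $A\subseteq\pi_i^{-1}(A_i)$ to transfer membership. Where you genuinely diverge is part (3). The paper stays combinatorial: it takes the witnessing sequences $\langle F_n\rangle$ and $\langle V_n\rangle$ from Definition \ref{2.13}, arranges (after shrinking) that each $W_n$ and $V_n$ is a box, projects them coordinatewise to get $\pi_i[F_n]$ and $V_i^{(n)}$, and then tests the definition in the $i^{th}$ factor by padding a given $G_i$ and $O_i$ with singletons $\{s_j\}$ and full spaces $T_j$ in the other coordinates. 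You instead run everything through the algebraic characterization of Lemma \ref{2.18}, applied once in each direction: extract $p\in K(e_S^*)\cap cl_{\beta S}A$, push it forward to $\widetilde{\pi}_i(p)\in K((e_i)_{S_i}^*)\cap cl_{\beta S_i}A_i$, and read off piecewise syndeticity of $A_i$. Both are valid (Lemma \ref{2.18} precedes this lemma and does not depend on it, so there is no circularity), and both must first manufacture a countable local base at each $e_i$ from the one at $e$; your justification via openness and continuity of the projections in the box topology is sound, and is essentially equivalent to the paper's device of shrinking each $W_n$ to a box $\bigtimes_{i\in I}W_i^{(n)}$ and taking factors. Your route is shorter and makes the ideal-transfer mechanism do all the work uniformly across parts (2) and (3), at the cost of importing the full strength of Lemma \ref{2.18} (whose sufficiency direction rests on \cite[Theorem 3.4]{tv}); the paper's route is longer but self-contained at the level of Definition \ref{2.13} and shows explicitly how the piecewise syndeticity data itself transfers coordinatewise.
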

\begin{proof}
For each $i\in I$, let $\pi_i:S\rightarrow S_i$ be the projection homomorphism from $S$ onto $S_i$.
\begin{enumerate}
 \item For each $i\in I$, pick $U_i\in\tau_{e_i}$ such that for each $F_i\in\mathcal{P}_f(U_i\cap S_i)$ and $V_i\in\tau_{e_i}$, there exists $x_i\in V_i\cap S_i$ such that $F_i+x_i\subseteq A_i$. Let $U=\bigtimes_{i\in I}U_i$. Let $F\in\mathcal{P}_f(U\cap S)$ and $V\in\tau_e$, and pick $V_i\in\tau_{e_i}$ for each $i\in I$ such that $\bigtimes_{i\in I}V_i\subseteq V$. 
 
Now, for $\pi_i[F]\in\mathcal{P}_f(S_i)$ and $V_i\in\tau_{e_i}$ pick $x_i\in V_i\cap S_i$ such that $\pi_i[F]+x_i\subseteq A_i$. Let $x=\times_{i\in I}x_i$. Then $$F+x\subseteq \bigtimes_{i\in I}(\pi_i[F]+x_i)\subseteq \bigtimes_{i\in I}A_i=A,$$ 
and therefore $A\subseteq S$ is thick near $e$.
 \item For each $i\in I$, let $\widetilde{\pi}_{i}:\beta S\rightarrow\beta S_{i}$ be the continuous extension of $\pi_{i}$. By Lemma \ref{2.7}, each $\widetilde{\pi}_{i}|_{e_S^*}:e_S^*\rightarrow (e_i)_{S_i}^*$ is a surjective homomorphism. Hence, by \cite[Exercise 1.7.3]{hs} we have $\widetilde{\pi}_{i}[K(e_S^*)]=K((e_i)_{S_i}^*)$. Pick an idempotent $p\in K(e_S^*)$ such that $A\in p$. Then $\widetilde{\pi}_{i}(p)$ is an idempotent in $K((e_i)_{S_i}^*)$ and $A_{i}\in \widetilde{\pi}_i(p)$.
 \item Let $\langle W_n\rangle_{n=1}^\infty$ be a countable local base for $e$ in $T$. For each $n\in\mathbb{N}$ and for each $i\in I$ pick a neighborhood $W_i^{(n)}$ of $e_i$ in $T_i$ such that $e\in\bigtimes_{i\in I}W_i^{(n)}\subseteq W_n$. Then $\big\langle\bigtimes_{i\in I}W_i^{(n)}\big\rangle_{n=1}^\infty$ is a countable local base for $e$ in $T$ and for each $i\in I$, $\big\langle W_i^{(n)}\big\rangle_{n=1}^\infty$ is a countable local base for $e_i$ in $T_i$. With respect to the countable local base $\big\langle\bigtimes_{i\in I}W_i^{(n)}\big\rangle_{n=1}^\infty$ for $e$ in $T$, pick $\langle F_n\rangle_{n=1}^\infty$ and $\langle V_n\rangle_{n=1}^\infty$ as guaranteed for $A$ by Definition \ref{2.13}(\ref{2.13(1)}). But, then the conditions $(a)$ and $(b)$ of Definition \ref{2.13}(\ref{2.13(1)}) also hold if we replace $V_n$ by any element of $\tau_e$ contained in $V_n$ for all $n\in\mathbb{N}$. So without loss of generality we can assume that for all $n\in\mathbb{N}$, $V_n=\bigtimes_{i\in I} V_i^{(n)}$. 

Fix $i\in I$. Then for each $n\in\mathbb{N}$ we have that $\pi_i[F_n]\in\mathcal{P}_f\big(W_i^{(n)}\cap S_i\big)$, $V_i^{(n)}\in\tau_{e_i}$ and $V_i^{(n)}\subseteq W_i^{(n)}$. Now let $G_i\in\mathcal{P}_f(S_i)$ and let $O_i\in\tau_{e_i}$. Then fix some element $s_j\in S_j$ for each $j\neq i$ and define $G=\bigtimes_{j\in I}G_j$, where $G_j:=\{s_j\}$ for $j\neq i$. Also, define $O=\bigtimes_{j\in I}O_j$, where $O_j:=T_j$ for each $j\neq i$. Pick $x=\times_{j\in I}x_j\in O\cap S$ such that for each $n\in\mathbb{N}$, $(G\cap V_n)+x\subseteq\bigcup_{t\in F_n}(-t+A)$, which implies $$\bigtimes_{j\in I}\big(\big(G_j\cap V_j^{(n)}\big)+x_j\big)\subseteq \bigcup_{t\in F_n}(-t+A)\subseteq\bigtimes_{j\in I}\bigcup_{t\in\pi_j[F_n]}(-t+A_j),$$
and in particular we have for each $n\in\mathbb{N}$ that $\big(G_i\cap V_i^{(n)}\big)+x_i\subseteq\bigcup_{t\in\pi_i[F_n]}(-t+A_i)$ where $x_i\in O_i\cap S_i$. Hence $A_i\subseteq S_i$ is piecewise syndetic near $e_i$. 
\end{enumerate}
\end{proof}
The proof of the following lemma directly follows from Definition \ref{2.13}(\ref{2.13(1)}).
\begin{lemma}\label{2.20}
For each $i\in\{1,2\}$, let $S_i$ be a near $e_i$ subsemigroup of a semitopological semigroup $T_i$ where $e_i$ has a countable local base in $T_i$. If for each $i\in\{1,2\}$, $A_i\subseteq S_i$ is piecewise syndetic near $e_i$, then $A_1\times A_2\subseteq S_1\times S_2$ is piecewise syndetic near $(e_1, e_2)$.
\end{lemma}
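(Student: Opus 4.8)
The plan is to verify Definition \ref{2.13}(\ref{2.13(1)}) directly for the product set, by assembling the witnessing data coordinatewise. Write $e=(e_1,e_2)$, $S=S_1\times S_2$, and $A=A_1\times A_2$. First I would fix, for each $i\in\{1,2\}$, a countable local base $\langle W_n^{(i)}\rangle_{n=1}^\infty$ for $e_i$ in $T_i$. Since $T=T_1\times T_2$ carries the box topology and the product is finite, $\langle W_n^{(1)}\times W_n^{(2)}\rangle_{n=1}^\infty$ is a countable local base for $e$ in $T$. Because each $A_i$ is piecewise syndetic near $e_i$, I extract sequences $\langle F_n^{(i)}\rangle_{n=1}^\infty$ and $\langle V_n^{(i)}\rangle_{n=1}^\infty$ satisfying conditions (a) and (b) of the definition relative to $\langle W_n^{(i)}\rangle_{n=1}^\infty$.

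Next I would put $F_n=F_n^{(1)}\times F_n^{(2)}$ and $V_n=V_n^{(1)}\times V_n^{(2)}$ and check condition (a): each $F_n$ is a finite subset of $(W_n^{(1)}\times W_n^{(2)})\cap S$, each $V_n$ lies in $\tau_e$, and $V_n\subseteq W_n^{(1)}\times W_n^{(2)}$. All three are immediate from the corresponding coordinatewise statements.

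The substance is condition (b). Given $G\in\mathcal{P}_f(S)$ and $O\in\tau_e$, I would choose $O_i\in\tau_{e_i}$ with $O_1\times O_2\subseteq O$, and then apply condition (b) for $A_i$ to the finite set $\pi_i[G]$ and the neighborhood $O_i$, obtaining $x_i\in O_i\cap S_i$ such that $\big(\pi_i[G]\cap V_n^{(i)}\big)+x_i\subseteq\bigcup_{t\in F_n^{(i)}}(-t+A_i)$ for every $n\in\mathbb{N}$. Setting $x=(x_1,x_2)\in O\cap S$, any $(g_1,g_2)\in G\cap V_n$ satisfies $g_i\in\pi_i[G]\cap V_n^{(i)}$, so there are $t_i\in F_n^{(i)}$ with $t_i+g_i+x_i\in A_i$. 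Taking $t=(t_1,t_2)\in F_n$ and using the identity $-t+A=(-t_1+A_1)\times(-t_2+A_2)$, I get $(g_1,g_2)+x\in -t+A\subseteq\bigcup_{t\in F_n}(-t+A)$. Hence $(G\cap V_n)+x\subseteq\bigcup_{t\in F_n}(-t+A)$ for every $n$, which is exactly condition (b), and $A_1\times A_2$ is piecewise syndetic near $(e_1,e_2)$.

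There is no genuine obstacle here; the argument is a bookkeeping exercise, consistent with the paper's remark that it follows directly from the definition. The only points requiring a little care are the two small observations carrying the proof: that the product of the coordinate bases is again a countable local base (valid precisely because the product is finite and we use the box topology), and the product-translate identity $-t+A=(-t_1+A_1)\times(-t_2+A_2)$, together with the routine check that membership of $(g_1,g_2)$ in $G\cap V_n$ forces each coordinate into $\pi_i[G]\cap V_n^{(i)}$ so that the coordinatewise hypotheses genuinely apply.
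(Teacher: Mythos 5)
Your proposal is correct and is exactly the argument the paper has in mind: the paper gives no written proof, stating only that the lemma "directly follows from Definition \ref{2.13}(\ref{2.13(1)})," and your coordinatewise assembly of the witnessing data (product base, $F_n=F_n^{(1)}\times F_n^{(2)}$, $V_n=V_n^{(1)}\times V_n^{(2)}$, and the single $x=(x_1,x_2)$ working for all $n$) is the straightforward verification being alluded to. The two points you flag — that the finite box product of nested countable bases is again a countable local base, and the identity $-t+A=(-t_1+A_1)\times(-t_2+A_2)$ — are indeed the only places where anything needs checking, and both are handled correctly.
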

Now we have the following partial characterization of when the Cartesian product of piecewise syndetic sets near idempotent is a piecewise syndetic set near idempotent.
\begin{theorem}\label{2.21}
Let $I$ be a set. For each $i\in I$, let $S_i$ be a near $e_i$ subsemigroup of a semitopological semigroup $T_i$ and let $A_i\subseteq S_i$. Let us consider the product semitopological semigroup $T=\bigtimes_{i\in I}T_i$ with box topology. Let $S=\bigtimes_{i\in I}S_i$, $A=\bigtimes_{i\in I}A_i$ and $e=\times_{i\in I}e_i$. Let $e$ have a countable local base in $T$. Let $J=\{i\in I:A_i\subseteq S_i  \text{ is not thick near } e_i\}$ be a finite set. Then $A\subseteq S$ is piecewise syndetic near $e$ if and only if for each $i\in I$, $A_i\subseteq S_i$ is piecewise syndetic near $e_i$.  
\end{theorem}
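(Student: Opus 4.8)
The plan is to prove the two implications separately. The forward direction is essentially free, while the reverse direction requires splitting the index set $I$ into the finitely many ``genuinely piecewise syndetic'' coordinates and the remaining ``thick'' coordinates, and treating the two groups by different tools.

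The forward implication needs no work beyond what is already in place: if $A\subseteq S$ is piecewise syndetic near $e$, then since $e$ has a countable local base in $T$, Lemma \ref{2.19}(3) gives immediately that each $A_i\subseteq S_i$ is piecewise syndetic near $e_i$.

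For the reverse implication I would partition $I=J\cup(I\setminus J)$ and factor $S=S^{(1)}\times S^{(2)}$, where $S^{(1)}=\bigtimes_{i\in J}S_i$ and $S^{(2)}=\bigtimes_{i\in I\setminus J}S_i$, with the matching factorizations $A=A^{(1)}\times A^{(2)}$ and $e=(e^{(1)},e^{(2)})$. Since for finitely many factors the box topology coincides with the product topology, the box topology on $T$ is identified with the product of the box topologies on $\bigtimes_{i\in J}T_i$ and $\bigtimes_{i\in I\setminus J}T_i$, so this factorization respects all the relevant structure. I would first record, exactly as in the proof of Lemma \ref{2.19}(3), that a countable local base $\langle W_n\rangle_{n=1}^\infty$ for $e$ in $T$ may be taken of the form $W_n=\bigtimes_{i\in I}W_i^{(n)}$; projecting onto the $I\setminus J$ coordinates then shows that $e^{(2)}$ has the countable local base $\big\langle\bigtimes_{i\in I\setminus J}W_i^{(n)}\big\rangle_{n=1}^\infty$ in $S^{(2)}$, while $e^{(1)}$ has a countable local base because $J$ is finite. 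The two factors are now handled separately. For the thick factor, every $A_i$ with $i\in I\setminus J$ is thick near $e_i$ by the definition of $J$, so Lemma \ref{2.19}(1) makes $A^{(2)}$ thick near $e^{(2)}$; by the remark following Lemma \ref{2.17} it is then central near $e^{(2)}$, and since $e^{(2)}$ has a countable local base, the remark following Lemma \ref{2.18} upgrades this to piecewise syndetic near $e^{(2)}$. For the finite factor, every $A_i$ with $i\in J$ is piecewise syndetic near $e_i$ by hypothesis, and a straightforward induction on $|J|$, whose inductive step is the two-factor Lemma \ref{2.20}, shows that $A^{(1)}$ is piecewise syndetic near $e^{(1)}$. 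A final application of Lemma \ref{2.20} to the two piecewise syndetic sets $A^{(1)}\subseteq S^{(1)}$ and $A^{(2)}\subseteq S^{(2)}$ yields that $A=A^{(1)}\times A^{(2)}$ is piecewise syndetic near $e=(e^{(1)},e^{(2)})$.

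The main obstacle is the bookkeeping around countable local bases: the chain thick $\Rightarrow$ central $\Rightarrow$ piecewise syndetic applied to $A^{(2)}$ rests on $e^{(2)}$ possessing a countable local base, which is not one of the hypotheses directly and must be extracted from the base for $e$ in $T$ by the projection argument above. I would also dispose of the degenerate cases separately: when $J=\emptyset$ all coordinates are thick, so $A$ is thick and hence piecewise syndetic; when $I\setminus J=\emptyset$ there are only finitely many factors, all piecewise syndetic, and the conclusion follows from the induction alone. In each degenerate case one of the two factors collapses, so only the corresponding half of the argument is needed.
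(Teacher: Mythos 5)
Your proposal is correct and follows essentially the same route as the paper's proof: forward direction via Lemma \ref{2.19}, and for the converse the same split of $I$ into the finite set $J$ (handled by inductive application of Lemma \ref{2.20}) and $I\setminus J$ (handled by Lemma \ref{2.19}(1) plus the chain thick $\Rightarrow$ central $\Rightarrow$ piecewise syndetic), combined by a final use of Lemma \ref{2.20}. If anything, you are more careful than the paper, which glosses over the verification that $\times_{i\in I\setminus J}e_i$ has a countable local base --- a hypothesis genuinely needed for both Definition \ref{2.13} and Lemma \ref{2.18} to apply to that factor.
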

\begin{proof}
The forward implication follows from Lemma \ref{2.19}. For the converse part, if $J=I$, then delete the references to $I\setminus J$ in the part that follows. Similarly, if $J=\emptyset$, then delete the references to $J$ in the part that follows. By Lemma \ref{2.19}, $\bigtimes_{i\in I\setminus J}A_i\subseteq \bigtimes_{i\in I\setminus J}S_i$ is thick near $\times_{i\in I\setminus J}e_i$, and therefore it is piecewise syndetic near $\times_{i\in I\setminus J}e_i$. Moreover, by inductive application of Lemma \ref{2.20}, $\bigtimes_{i\in J}A_i\subseteq \bigtimes_{i\in J}S_i$ is piecewise syndetic near $\times_{i\in J}e_i$. Thus $\big(\bigtimes_{i\in I\setminus J}A_i\big)\times\big(\bigtimes_{i\in J}A_i\big)\subseteq \big(\bigtimes_{i\in I\setminus J}S_i\big)\times\big(\bigtimes_{i\in J}S_i\big)$ is piecewise syndetic near $\big(\times_{i\in I\setminus J}e_i\big)\times\big(\times_{i\in J}e_i\big)$, i.e. $A\subseteq S$ is piecewise syndetic near $e$.
\end{proof}
Regarding the Cartesian product of Central sets near idempotent, we have the following theorem.
\begin{theorem}\label{2.22}
Let $I$ be a set. For each $i\in I$, let $S_i$ be a near $e_i$ subsemigroup
of a semitopological semigroup $T_i$ and let $A_{i}\subseteq S_{i}$. Let us consider the product semitopological semigroup $T=\bigtimes_{i\in I}T_i$ with box topology. Let $S=\bigtimes_{i\in I}S_{i}$,
$A=\bigtimes_{i\in I}A_{i}$ and $e=\times_{i\in I}e_{i}$. Let
$J=\{i\in I:A_i\subseteq S_i \text{ is not thick near } e_i\}$ be a finite set.
Then $A\subseteq S$ is central near $e$ if and only if for each $i\in I$,
$A_i\subseteq S_i$ is central near $e_i$. 
\end{theorem}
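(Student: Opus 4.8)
The plan is to mirror the proof of Theorem \ref{2.21}, with central sets playing the role of piecewise syndetic sets: in place of Lemma \ref{2.20} (which combines two piecewise syndetic sets) I would invoke Corollary \ref{2.10} (which combines two central sets), and in place of the implication ``thick $\Rightarrow$ piecewise syndetic'' I would use the implication ``thick $\Rightarrow$ central'' recorded in the remark following Lemma \ref{2.17}. Note that, unlike in Theorem \ref{2.21}, no countable local base assumption is needed here, since centrality near $e$ is defined purely algebraically through $K(e_S^*)$; this is why the hypothesis is absent from the statement.

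The forward implication is immediate from Lemma \ref{2.19}(2): if $A=\bigtimes_{i\in I}A_i$ is central near $e$, then each $A_i\subseteq S_i$ is central near $e_i$, and this direction uses neither finiteness of $J$ nor any topological restriction.

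For the converse, suppose each $A_i$ is central near $e_i$. First I would dispose of the degenerate cases $J=I$ and $J=\emptyset$ exactly as in Theorem \ref{2.21}, deleting the corresponding references in what follows. For the indices in $I\setminus J$, each $A_i$ is thick near $e_i$, so by Lemma \ref{2.19}(1) the box product $\bigtimes_{i\in I\setminus J}A_i$ is thick near $\times_{i\in I\setminus J}e_i$; by the observation following Lemma \ref{2.17}, a thick set near an idempotent is central near that idempotent, whence $\bigtimes_{i\in I\setminus J}A_i$ is central near $\times_{i\in I\setminus J}e_i$. For the finitely many indices in $J$, I would prove by induction on $|J|$, applying Corollary \ref{2.10} to adjoin one coordinate at each step, that $\bigtimes_{i\in J}A_i$ is central near $\times_{i\in J}e_i$. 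Finally, regarding $S$ as the two-fold product $\big(\bigtimes_{i\in I\setminus J}S_i\big)\times\big(\bigtimes_{i\in J}S_i\big)$, a single further application of Corollary \ref{2.10} to the two central sets $\bigtimes_{i\in I\setminus J}A_i$ and $\bigtimes_{i\in J}A_i$ yields that $A$ is central near $e$.

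The one point requiring care — the main, if modest, obstacle — is that Corollary \ref{2.10} is stated for a product of exactly two near-idempotent subsemigroups, so one must legitimately regroup the coordinates. This is safe because a basic box-open set $\bigtimes_{i\in I}U_i$ factors as $\big(\bigtimes_{i\in I\setminus J}U_i\big)\times\big(\bigtimes_{i\in J}U_i\big)$, so the box topology on $T$ coincides with the two-factor product of the box topologies on $\bigtimes_{i\in I\setminus J}T_i$ and $\bigtimes_{i\in J}T_i$; hence $\bigtimes_{i\in I\setminus J}S_i$ and $\bigtimes_{i\in J}S_i$ are genuine near-idempotent subsemigroups to which Corollary \ref{2.10} applies. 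The finiteness of $J$ is essential here: it is precisely what permits the inductive combination of the central coordinates through finitely many applications of Corollary \ref{2.10}, whereas an arbitrary infinite product of central sets need not be central, which is why only a partial characterization is available.
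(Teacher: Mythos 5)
Your proposal is correct and follows essentially the same route as the paper's own proof: forward direction via Lemma \ref{2.19}, converse by splitting $I$ into $I\setminus J$ (thick, hence central, via Lemma \ref{2.19} and the remark after Lemma \ref{2.17}) and $J$ (inductive application of Corollary \ref{2.10}), then one final application of Corollary \ref{2.10} to combine the two blocks. Your added justification that the box topology on $T$ factors as the two-fold product of the box topologies on the two sub-products, so that Corollary \ref{2.10} legitimately applies to the regrouped coordinates, is a point the paper leaves implicit, but it is the same argument.
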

\begin{proof}
The forward implication follows from Lemma \ref{2.19}. For the converse part, if $J=I$, then delete the references to $I\setminus J$ in the part that follows. Similarly, if $J=\emptyset$, then delete the references to $J$ in the part that follows. By
Lemma \ref{2.19}, $\bigtimes_{i\in I\setminus J}A_{i}\subseteq \bigtimes_{i\in I\setminus J}S_i$ is thick near
$\times_{i\in I\setminus J}e_{i}$, and therefore it is central near $\times_{i\in I\setminus J}e_{i}$. Moreover, by inductive application of Corollary \ref{2.10}, $\bigtimes_{i\in J}A_i\subseteq \bigtimes_{i\in J}S_i$ is central near $\times_{i\in J}e_i$. Thus $\big(\bigtimes_{i\in I\setminus J}A_{i}\big)\times\big(\bigtimes_{i\in J}A_{i}\big)\subseteq \big(\bigtimes_{i\in I\setminus J}S_{i}\big)\times\big(\bigtimes_{i\in J}S_{i}\big)$
is central near $\big(\times_{i\in I\setminus J}e_{i}\big)\times\big(\times_{i\in J}e_{i}\big)$,
i.e. $A\subseteq S$ is central near $e$.
\end{proof}
\section{Abundance of Large sets near idempotent}
In \cite{bh}, the authors have studied the abundance of large sets. Following them, we deduce the corresponding results near idempotent. In the following, we write $I^\Diamond$ for a subsemigroup of $S^l$. When we say that $\varepsilon$ is a property which may be possessed by subsets of a semigroup, we mean properties such as those we have been considering, whose definition depends on the particular semigroup in which the sets reside. By $\varepsilon^*$ set in a semigroup, we mean a subset in the semigroup which intersects all the $\varepsilon$ sets in that semigroup. By $\pi_i$ we mean the projection onto the $i^{th}$ coordinate.

\begin{lemma}\label{3.3}
Let $\varepsilon$ be a partition regular property which may be possessed by subsets of a semigroup. Let $S$ be a semigroup, let $l\in \mathbb{N}$, and let $I^\Diamond$ be a subsemigroup of $S^l$. Statement $(1)$ implies statement $(2)$. If each superset of an $\varepsilon$ set in $S$ is an $\varepsilon$ set, then statements $(1)$ and $(2)$ are equivalent.
\begin{enumerate}
\item For every $\varepsilon$ set $A$ in $I^\Diamond$ and every $i\in\{1, 2, \ldots, l\}$, $\pi_i[A]$ is an $\varepsilon$ set in $S$.
\item Whenever $B$ is an $\varepsilon^*$ set in $S$, $B^l\cap I^\Diamond$ is an $\varepsilon^*$ set in $I^\Diamond$.
\end{enumerate}
\end{lemma}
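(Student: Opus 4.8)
The plan is to prove the implication $(1)\Rightarrow(2)$ directly by a contrapositive-style argument on the failure of the $\varepsilon^*$ property, and then to close the loop $(2)\Rightarrow(1)$ under the extra hypothesis that supersets of $\varepsilon$ sets are $\varepsilon$ sets. First I would unpack the definitions: recall that $B$ is $\varepsilon^*$ in $S$ means $B$ meets every $\varepsilon$ set in $S$, and that $B^l\cap I^\Diamond$ being $\varepsilon^*$ in $I^\Diamond$ means it meets every $\varepsilon$ set in $I^\Diamond$. So the natural move is: fix an $\varepsilon$ set $A$ in $I^\Diamond$ and show $A\cap(B^l\cap I^\Diamond)\neq\emptyset$, i.e. that $A$ contains a point all of whose coordinates lie in $B$.

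\textbf{The implication $(1)\Rightarrow(2)$.}
Let $B$ be an $\varepsilon^*$ set in $S$ and let $A$ be an arbitrary $\varepsilon$ set in $I^\Diamond$. By hypothesis $(1)$, for each $i\in\{1,2,\ldots,l\}$ the projection $\pi_i[A]$ is an $\varepsilon$ set in $S$. Since $B$ is $\varepsilon^*$ in $S$, it meets each $\pi_i[A]$, so $\pi_i[A]\cap B\neq\emptyset$ for every $i$. The obstacle here is that meeting each projection separately is \emph{not} the same as finding a single point $\vec{x}\in A$ with every coordinate in $B$; the witnesses could come from different points of $A$. To resolve this I would use partition regularity of $\varepsilon$: write $A=(A\cap(S^{i-1}\times B\times S^{l-i}))\cup(A\setminus(S^{i-1}\times B\times S^{l-i}))$ and argue coordinate by coordinate. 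More precisely, the clean route is to show that $A\cap(B^l\cap I^\Diamond)$ is nonempty by an inductive refinement: one shows that $A_i:=A\cap\{\vec{x}:x_j\in B \text{ for } j\le i\}$ remains an $\varepsilon$ set for each $i$, starting from $A_0=A$. At each step, $A_{i-1}$ is an $\varepsilon$ set, so by $(1)$ its $i$-th projection $\pi_i[A_{i-1}]$ is an $\varepsilon$ set in $S$, hence meets $B$; partition regularity applied to the two-cell cover of $A_{i-1}$ according to whether the $i$-th coordinate lies in $B$ then forces the cell with $i$-th coordinate in $B$ to be an $\varepsilon$ set (the other cell has empty $i$-th projection intersection with $B$ and cannot carry the $\varepsilon$ property on that coordinate). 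Iterating $l$ times yields $A_l\subseteq B^l\cap I^\Diamond$ a nonempty $\varepsilon$ set, so in particular $A\cap(B^l\cap I^\Diamond)\neq\emptyset$.

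\textbf{The reverse implication $(2)\Rightarrow(1)$ under the superset hypothesis.}
Assume now that every superset of an $\varepsilon$ set in $S$ is an $\varepsilon$ set, and assume $(2)$. To prove $(1)$, fix an $\varepsilon$ set $A$ in $I^\Diamond$ and an index $i$, and suppose toward a contradiction that $\pi_i[A]$ is \emph{not} an $\varepsilon$ set in $S$. The idea is to manufacture an $\varepsilon^*$ set that avoids $\pi_i[A]$: I would aim to show that if $\pi_i[A]$ is not $\varepsilon$, then $S\setminus\pi_i[A]$, or a suitable superset argument built from it, produces an $\varepsilon^*$ set $B$ in $S$ with $B\cap\pi_i[A]=\emptyset$; then $B^l\cap I^\Diamond$ is $\varepsilon^*$ in $I^\Diamond$ by $(2)$, yet it misses $A$ (since no point of $A$ can have $i$-th coordinate in $B$), contradicting that $A$ is an $\varepsilon$ set meeting every $\varepsilon^*$ set. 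The delicate point, and what I expect to be the \emph{main obstacle}, is justifying that the complement of a non-$\varepsilon$ set is $\varepsilon^*$: this is exactly where the superset hypothesis is needed, since it guarantees the family of $\varepsilon$ sets is upward closed and hence that a set failing to be $\varepsilon$ must have $\varepsilon^*$ complement relative to the partition-regular structure. I would verify this auxiliary fact carefully, as it is the linchpin connecting $(2)$ back to $(1)$, and it is the reason the equivalence requires the extra hypothesis while the forward direction does not.
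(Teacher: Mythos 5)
Your proposal is correct, and it is essentially the same argument as the one behind this lemma, which the paper does not prove itself but simply cites as \cite[Theorem 2.2]{bh}: your reverse direction (the complement of a non-$\varepsilon$ set is $\varepsilon^*$ by upward closure, then apply $(2)$ and project to get a contradiction) is exactly the standard proof. The only cosmetic difference is in $(1)\Rightarrow(2)$, where you iterate $l$ two-cell partitions, checking at each stage that the cell $A_i=\{\vec{x}\in A_{i-1}:x_i\in B\}$ remains $\varepsilon$, whereas the usual argument assumes $A\cap B^l\cap I^\Diamond=\emptyset$ and applies partition regularity once to the cover $A=\bigcup_{i=1}^{l}\{\vec{x}\in A:x_i\notin B\}$, ruling out every cell by the same projection argument you use; both versions rest on the identical mechanism, namely that a cell whose $i$-th projection misses $B$ cannot be $\varepsilon$ when $B$ is $\varepsilon^*$.
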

\begin{proof}
\cite[Theorem 2.2]{bh}.
\end{proof}
 
Note that Definition \ref{2.3}, Theorem \ref{Theorem 2.8} and Lemma \ref{2.18}  ensures the partition regular property of central sets near idempotent, IP sets near idempotent  and piecewise syndetic sets near idempotent, respectively. If $X$ is a set, $x\in X$ and $l\in\mathbb{N}$, we will write $\overline{x}$ for the element in $X^l$ each of whose coordinates is $x$.   
\begin{corollary}\label{3.4}
Let $S$ be a dense  near $e$ subsemigroup of a semitopological semigroup $T$. Let $l\in \mathbb{N}$ and let $I^\Diamond\subseteq S^l$ be a near $\overline{e}$ semigroup of $T^l$. If $B\subseteq S$ be an $IP^*$ set near $e$, then $B^l\cap I^\Diamond\subseteq I^\Diamond$ is an $IP^*$ set near $\overline{e}$.
\end{corollary}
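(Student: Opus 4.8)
The plan is to invoke Lemma \ref{3.3} with $\varepsilon$ taken to be the property ``$IP$ near idempotent'': in $S$ this reads ``$IP$ near $e$'' and in $I^\Diamond$ it reads ``$IP$ near $\overline{e}$'', so that $\varepsilon^*$ becomes $IP^*$ near $e$ in $S$ and $IP^*$ near $\overline{e}$ in $I^\Diamond$. By the remark preceding the corollary (via Theorem \ref{Theorem 2.8}) this property is partition regular, which is all that Lemma \ref{3.3} needs for the implication $(1)\Rightarrow(2)$. Hence it suffices to verify statement $(1)$, namely that the $i$th coordinate projection of an $IP$ set near $\overline{e}$ in $I^\Diamond$ is an $IP$ set near $e$ in $S$; statement $(2)$ is then exactly the assertion of the corollary.

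To check $(1)$, fix $i\in\{1,\ldots,l\}$, let $A\subseteq I^\Diamond$ be $IP$ near $\overline{e}$, and pick a sequence $\langle y_n\rangle_{n=1}^\infty$ in $I^\Diamond$ with $\sum_{n=1}^\infty y_n$ converging near $\overline{e}$ and $FS(\langle y_n\rangle_{n=1}^\infty)\subseteq A$. Since $\pi_i$ is a homomorphism it commutes with finite sums, so $FS(\langle \pi_i(y_n)\rangle_{n=1}^\infty)=\pi_i[FS(\langle y_n\rangle_{n=1}^\infty)]\subseteq \pi_i[A]$, with each $\pi_i(y_n)\in S$. It then remains to see that $\sum_{n=1}^\infty \pi_i(y_n)$ converges near $e$.

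For the convergence, given $W\in\tau_e$ I would form the neighborhood $U=\bigtimes_{j}U_j$ of $\overline{e}$ in $T^l$ with $U_i=W$ and $U_j=T_j$ for $j\neq i$ (open in the box topology, which for the finite index set coincides with the product topology), so that $\pi_i[U]=W$. By convergence near $\overline{e}$ there is $m\in\mathbb{N}$ with $FS(\langle y_n\rangle_{n=m}^\infty)\subseteq U$, and applying $\pi_i$ gives $FS(\langle \pi_i(y_n)\rangle_{n=m}^\infty)\subseteq \pi_i[U]=W$. Hence $\sum_{n=1}^\infty \pi_i(y_n)$ converges near $e$ and $\pi_i[A]$ is $IP$ near $e$, establishing $(1)$. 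Lemma \ref{3.3} then delivers $(2)$: for any $IP^*$ set $B$ near $e$ in $S$, the set $B^l\cap I^\Diamond$ is $IP^*$ near $\overline{e}$ in $I^\Diamond$.

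The bulk of the argument is routine coordinate bookkeeping; the one genuinely delicate point is the topological step of lifting a neighborhood $W$ of $e$ to a neighborhood of $\overline{e}$ in $T^l$ whose $i$th projection is exactly $W$, and verifying that convergence near $\overline{e}$ descends through $\pi_i$ to convergence near $e$. The hypotheses that $S$ be dense near $e$ and that $I^\Diamond$ be near $\overline{e}$ are what make ``$IP$ near $e$'' and ``$IP$ near $\overline{e}$'' (together with their stars) well-defined and nonvacuous, so that Lemma \ref{3.3} is genuinely applicable.
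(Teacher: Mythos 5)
Your proof is correct and takes essentially the same approach as the paper: the paper likewise reduces the corollary to Lemma \ref{3.3} with $\varepsilon$ taken to be ``IP near idempotent'' (partition regular via Theorem \ref{Theorem 2.8}), and treats your statement (1) --- that coordinate projections of IP sets near $\overline{e}$ are IP near $e$ --- as immediate from the definition, which you simply verify in detail with the box-neighborhood lifting argument. The only blemish is notational: the factors of $T^l$ are all equal to $T$, so for $j\neq i$ you should write $U_j=T$ rather than $U_j=T_j$.
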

\begin{proof}
It is immediate from the definition that whenever $A\subseteq I^\Diamond$ is an $IP$ set near $\overline{e}$ and $i\in \{1, 2, \ldots, l\}$, then $\pi_i[A]\subseteq S$ is an IP set near $e$. Now the result follows from Lemma \ref{3.3}.
\end{proof}

\begin{lemma}\label{Lemma 3.4}
Let $S$ be a near $e$ subsemigroup of a semitopological semigroup $T$. Let $l\in\mathbb{N}$. Let $I^\Diamond\subseteq S^l$ be a near $\overline{e}$ subsemigroup of $T^l$ such that for each $i\in\{1,2,\ldots, l\}$ and for $\mathcal{E}_i\subseteq\pi_i[I^\Diamond]$, if $e\in cl_T(\mathcal{E}_i)$ then $\overline{e}\in cl_{T^l}\big(\pi_i^{-1}(\mathcal{E}_i)\big)$ where $\pi_i:I^\Diamond\rightarrow\pi_i[I^\Diamond]$ is the natural projection homomorphism onto the $i^{th}$ coordinate. If for each $i\in\{1,2,\ldots,l\}$, $\widetilde{\pi}_i:\beta I^\Diamond\rightarrow\beta(\pi_i[I^\Diamond])$ is the continuous extension of $\pi_i$, then $\widetilde{\pi}_i\big[\overline{e}_{I^\Diamond}^*\big]=e_{\pi_i[I^\Diamond]}^*$.

\end{lemma}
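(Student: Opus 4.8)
The plan is to follow the two-inclusion strategy used in the proof of Lemma~\ref{2.7}, the additional closure hypothesis playing exactly the role that the full box-product structure played there. First I would record that the right-hand side is well defined: since $I^\Diamond$ is near $\overline{e}$ in $T^l$ we have $\overline{e}\in cl_{T^l}(I^\Diamond)$, and continuity of the coordinate projection $T^l\rightarrow T$ gives $e=\pi_i(\overline{e})\in cl_T(\pi_i[I^\Diamond])$, so that $\pi_i[I^\Diamond]$ is a near $e$ subsemigroup of $T$ and $e_{\pi_i[I^\Diamond]}^*$ is meaningful. Throughout I would use the description $\widetilde{\pi}_i(p)=\{A_i\subseteq\pi_i[I^\Diamond]:\pi_i^{-1}(A_i)\in p\}$ from \cite[Lemma 3.30]{hs}.

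For the inclusion $\widetilde{\pi}_i[\overline{e}_{I^\Diamond}^*]\subseteq e_{\pi_i[I^\Diamond]}^*$ I would take $p\in\overline{e}_{I^\Diamond}^*$ and an arbitrary $A_i\in\widetilde{\pi}_i(p)$, so that $\pi_i^{-1}(A_i)\in p$ and hence $\overline{e}\in cl_{T^l}(\pi_i^{-1}(A_i))$. Applying the continuous projection and using $\pi_i[\pi_i^{-1}(A_i)]=A_i$ (valid because $A_i\subseteq\pi_i[I^\Diamond]$ and $\pi_i$ maps onto its image) yields $e=\pi_i(\overline{e})\in cl_T(A_i)$; since $A_i$ was arbitrary this gives $\widetilde{\pi}_i(p)\in e_{\pi_i[I^\Diamond]}^*$. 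This direction uses only continuity of the projection and does not need the closure hypothesis.

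The reverse inclusion is where the hypothesis enters, and it is the \emph{main obstacle}. Given $p_i\in e_{\pi_i[I^\Diamond]}^*$, I would form the families $\mathcal{A}=\{\pi_i^{-1}(A_i):A_i\in p_i\}$ and $\mathcal{R}=\{A\subseteq I^\Diamond:\overline{e}\in cl_{T^l}(A)\}$, noting as in Lemma~\ref{2.7} that $\mathcal{R}$ is nonempty, omits $\emptyset$, is upward closed and partition regular. The crucial verification is that every finite intersection of members of $\mathcal{A}$ lies in $\mathcal{R}$: for finite $\mathcal{F}\subseteq p_i$ the set $\mathcal{E}_i:=\bigcap_{A_i\in\mathcal{F}}A_i$ again belongs to $p_i$, so $e\in cl_T(\mathcal{E}_i)$, and it is precisely here that the hypothesis lets me pass to $\overline{e}\in cl_{T^l}(\pi_i^{-1}(\mathcal{E}_i))=cl_{T^l}\big(\bigcap_{A_i\in\mathcal{F}}\pi_i^{-1}(A_i)\big)$, that is $\bigcap_{A_i\in\mathcal{F}}\pi_i^{-1}(A_i)\in\mathcal{R}$. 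With this in hand, \cite[Theorem 3.11]{hs} furnishes $p\in\beta I^\Diamond$ with $\mathcal{A}\subseteq p\subseteq\mathcal{R}$, whence $p\in\overline{e}_{I^\Diamond}^*$, and $\mathcal{A}\subseteq p$ forces $p_i\subseteq\widetilde{\pi}_i(p)$, so $p_i=\widetilde{\pi}_i(p)$ since both are ultrafilters on $\pi_i[I^\Diamond]$. Apart from the hypothesis-driven step, the only thing to watch is the set identity $\pi_i^{-1}(\mathcal{E}_i)=\bigcap_{A_i\in\mathcal{F}}\pi_i^{-1}(A_i)$; everything else is formal and runs parallel to Lemma~\ref{2.7}.
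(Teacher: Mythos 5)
Your proposal is correct and takes essentially the same route as the paper's own proof: the forward inclusion via \cite[Lemma 3.30]{hs} and continuity of the coordinate projection, and the reverse inclusion by building the families $\mathcal{A}=\{\pi_i^{-1}(A_i):A_i\in p_i\}$ and $\mathcal{R}=\{A\subseteq I^\Diamond:\overline{e}\in cl_{T^l}(A)\}$, invoking the closure hypothesis precisely to place finite intersections of members of $\mathcal{A}$ in $\mathcal{R}$, and then applying \cite[Theorem 3.11]{hs} exactly as in Lemma \ref{2.7}. Your added remarks (that $\pi_i[I^\Diamond]$ is indeed a near $e$ subsemigroup, and the explicit identity $\pi_i^{-1}\big(\bigcap_{A_i\in\mathcal{F}}A_i\big)=\bigcap_{A_i\in\mathcal{F}}\pi_i^{-1}(A_i)$) only make explicit steps the paper leaves implicit.
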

\begin{proof}
Let $i\in\{1,2,\ldots,l\}$. For $p\in\overline{e}_{I^\Diamond}^*$, we have $\overline{e}\in\bigcap_{A\in p}cl_{T^l}(A)$. Also, by \cite[Lemma 3.30]{hs}, $\widetilde{\pi}_i(p)=\{A_i\subseteq\pi_i[I^\Diamond]:\pi_i^{-1}(A_i)\in p\}$. Thus, for each $A_i\in\widetilde{\pi}_i(p)$ we have $\pi_i^{-1}(A_i)\in p$, which implies $\overline{e}\in cl_{T^l}\big(\pi_i^{-1}(A_i)\big)$, i.e. $e\in cl_T(A_i)$. Hence $e\in\bigcap_{A_i\in\widetilde{\pi}_i(p)}cl_T(A_i)$, i.e. $\widetilde{\pi}_i(p)\in e_{\pi_i[I^\Diamond]}^*$.

To prove the reverse inclusion, let $p_i\in e_{\pi_i[I^\Diamond]}^*$. Consider the family $\mathcal{A}:=\{\pi_i^{-1}(A_i):A_i\in p_i\}$ and the family $\mathcal{R}:=\{A\subseteq I^\Diamond:\overline{e}\in cl_{T^l}(A)\}$. Then $\mathcal{A}$ is closed under finite intersection, $\mathcal{R}$ is nonempty, $\emptyset\notin\mathcal{R}$, $\mathcal{R}^{\uparrow}:=\{B\subseteq I^\Diamond:A\subseteq B \text{ for some } A\in\mathcal{R}\}=\mathcal{R}$ and $\mathcal{R}$ is partition regular. Moreover, if $A_i\in p_i$, then $A_i\subseteq\pi_i[I^\Diamond]$ and $e\in cl_T(A_i)$ implies $\overline{e}\in cl_{T^l}\big(\pi_i^{-1}(A_i)\big)$. Thus $\mathcal{A}\subseteq\mathcal{R}$. Hence, by \cite[Theorem 3.11]{hs} pick $p\in\beta I^\Diamond$ such that $\mathcal{A}\subseteq p\subseteq\mathcal{R}$. Then $p\in \overline{e}_{I^\Diamond}^*$. Also, if $A_i\in p_i$, then $\pi_i^{-1}(A_i)\in p$, i.e. $A_i\in\widetilde{\pi}_i(p)$. Thus $p_i\subseteq\widetilde{\pi}_i(p)$, and both being ultrafilters, we have that $p_i=\widetilde{\pi}_i(p)$. Therefore $\widetilde{\pi}_i\big[\overline{e}_{I^\Diamond}^*\big]=e_{\pi_i[I^\Diamond]}^*$.
\end{proof}

\begin{lemma}\label{3.5}
Let $S$ be a near $e$ subsemigroup of a semitopological semigroup $T$. Let $l\in\mathbb{N}$. Let $I^\Diamond\subseteq S^l$ be a near $\overline{e}$ subsemigroup of $T^l$ such that for each $i\in\{1,2,\ldots, l\}$ and for $\mathcal{E}_i\subseteq\pi_i[I^\Diamond]$, if $e\in cl_T(\mathcal{E}_i)$, then $\overline{e}\in cl_{T^l}\big(\pi_i^{-1}(\mathcal{E}_i)\big)$ where $\pi_i:I^\Diamond\rightarrow\pi_i[I^\Diamond]$ is the natural projection homomorphism onto the $i^{th}$ coordinate. Let $A\subseteq I^\Diamond$.
\begin{enumerate}
\item If $A\subseteq I^\Diamond$ is central near $\overline{e}$, then $\pi_i[A]\subseteq\pi_i[I^\Diamond]$ is central near $e$ for each $i\in\{1,2,\ldots,l\}$.
\item If $e$ has a countable local base in $T$ and if $A\subseteq I^\Diamond$ is piecewise syndetic near $\overline{e}$, then $\pi_i[A]\subseteq\pi_i[I^\Diamond]$ is piecewise syndetic near $e$ for each $i\in\{1,2,\ldots,l\}$.
\end{enumerate}
\end{lemma}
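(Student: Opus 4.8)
The plan is to prove both parts by pushing the relevant ultrafilter forward along the continuous extension $\widetilde{\pi}_i$, exactly as in the proof of Lemma \ref{2.19}, with Lemma \ref{Lemma 3.4} playing the role that Lemma \ref{2.7} played there. First I would record the two structural facts on which everything rests. Since $I^\Diamond$ is near $\overline{e}$, continuity of the projection gives $e\in cl_T(\pi_i[I^\Diamond])$, so $\pi_i[I^\Diamond]$ is itself a near $e$ subsemigroup of $T$; as $e$ has a countable local base in the same semitopological semigroup $T$, Lemma \ref{2.18} will be applicable to $\pi_i[I^\Diamond]$ in part (2). Next, by \cite[Corollary 4.22]{hs} the map $\widetilde{\pi}_i$ is a homomorphism, and by Lemma \ref{Lemma 3.4} its restriction $\widetilde{\pi}_i|_{\overline{e}_{I^\Diamond}^*}:\overline{e}_{I^\Diamond}^*\to e_{\pi_i[I^\Diamond]}^*$ is a continuous surjective homomorphism between compact right topological semigroups. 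Hence by \cite[Exercise 1.7.3]{hs} we obtain $\widetilde{\pi}_i[K(\overline{e}_{I^\Diamond}^*)]=K(e_{\pi_i[I^\Diamond]}^*)$.

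For part (1), pick an idempotent $p\in K(\overline{e}_{I^\Diamond}^*)$ with $A\in p$ and set $q=\widetilde{\pi}_i(p)$. As the homomorphic image of an idempotent, $q$ is an idempotent, and by the preceding paragraph $q\in K(e_{\pi_i[I^\Diamond]}^*)$. It remains to check $\pi_i[A]\in q$: by \cite[Lemma 3.30]{hs}, $q=\{A_i\subseteq\pi_i[I^\Diamond]:\pi_i^{-1}(A_i)\in p\}$, and since $A\subseteq\pi_i^{-1}(\pi_i[A])$ with $A\in p$, we get $\pi_i^{-1}(\pi_i[A])\in p$, i.e. $\pi_i[A]\in q$. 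Thus $\pi_i[A]$ is central near $e$.

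For part (2), I would invoke Lemma \ref{2.18} in both directions. Since $A$ is piecewise syndetic near $\overline{e}$, pick $r\in K(\overline{e}_{I^\Diamond}^*)\cap cl_{\beta I^\Diamond}A$. Then $\widetilde{\pi}_i(r)\in K(e_{\pi_i[I^\Diamond]}^*)$ by the $K$-preservation above, while continuity of $\widetilde{\pi}_i$ together with $\widetilde{\pi}_i[A]=\pi_i[A]$ gives $\widetilde{\pi}_i(r)\in \widetilde{\pi}_i[cl_{\beta I^\Diamond}A]\subseteq cl_{\beta(\pi_i[I^\Diamond])}(\pi_i[A])$. Hence $\widetilde{\pi}_i(r)$ witnesses $K(e_{\pi_i[I^\Diamond]}^*)\cap cl_{\beta(\pi_i[I^\Diamond])}(\pi_i[A])\neq\emptyset$, so by Lemma \ref{2.18} the set $\pi_i[A]$ is piecewise syndetic near $e$.

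The computational content is light; the real work has already been done in Lemma \ref{Lemma 3.4}. The step I would watch most carefully is the identity $\widetilde{\pi}_i[K(\overline{e}_{I^\Diamond}^*)]=K(e_{\pi_i[I^\Diamond]}^*)$, since this requires the restricted map to be a genuine continuous \emph{surjective} homomorphism of compact right topological semigroups before \cite[Exercise 1.7.3]{hs} can be applied; surjectivity is precisely the nontrivial inclusion supplied by Lemma \ref{Lemma 3.4}, which in turn is where the hypothesis relating $cl_T(\mathcal{E}_i)$ to $cl_{T^l}(\pi_i^{-1}(\mathcal{E}_i))$ is consumed. A secondary point to confirm is that Lemma \ref{2.18} applies to $\pi_i[I^\Diamond]$, which is why I first verify that it is a near $e$ subsemigroup of $T$ with $e$ possessing a countable local base.
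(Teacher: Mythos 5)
Your proposal is correct and follows essentially the same route as the paper: both use Lemma \ref{Lemma 3.4} together with \cite[Exercise 1.7.3]{hs} to get $\widetilde{\pi}_i[K(\overline{e}_{I^\Diamond}^*)]=K\big(e_{\pi_i[I^\Diamond]}^*\big)$, then push the relevant (idempotent) ultrafilter forward, invoking Lemma \ref{2.18} in both directions for part (2). The only differences are cosmetic — you cite \cite[Corollary 4.22]{hs} where the paper cites \cite[Lemma 2.14]{hs} for the homomorphism property, and you spell out the verification $\pi_i[A]\in\widetilde{\pi}_i(p)$ that the paper leaves implicit.
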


\begin{proof}
For each $i\in \{1,2,\ldots,l\}$, let $\widetilde{\pi}_i:\beta I^\Diamond\rightarrow\beta\big(\pi_i[I^\Diamond]\big)$ be the continuous extension of $\pi_i$. By Lemma \ref{Lemma 3.4} and \cite[Lemma 2.14]{hs}, $\widetilde{\pi}_i|_{\overline{e}_{I^\Diamond}^*}:\overline{e}_{I^\Diamond}^*\rightarrow e_{\pi_i[I^\Diamond]}^*$ is a surjective homomorphism. Hence, by \cite[Exercise 1.7.3]{hs} we have that $\widetilde{\pi}_i[K(\overline{e}_{I^\Diamond}^*)]=K\big(e_{\pi_i[I^\Diamond]}^*\big)$.
\begin{enumerate}
\item Pick an idempotent $p\in K(\overline{e}_{I^\Diamond}^*)$ such that $A\in p$. Then $\widetilde{\pi}_i(p)$ is an idempotent in $K\big(e_{\pi_i[I^\Diamond]}^*\big)$ and $\pi_i[A]\in\widetilde{\pi}_i(p)$.
\item Using Lemma \ref{2.18} pick $p\in K(\overline{e}_{I^\Diamond}^*)$ such that $A\in p$. Then $\widetilde{\pi}_i(p)\in K\big(e_{\pi_i[I^\Diamond]}^*\big)$ and $\pi_i[A]\in\widetilde{\pi}_i(p)$.
\end{enumerate}
\end{proof}

\begin{theorem}\label{3.6}
Let $S$ be a near $e$ subsemigroup of a semitopological semigroup $T$ where $e$ has a countable local base in $T$. Let $l\in\mathbb{N}$. Let $I^\Diamond\subseteq S^l$ be a near $\overline{e}$ subsemigroup of $T^l$ such that for each $i\in\{1,2,\ldots,l\}$, $\pi_i[I^\Diamond]\subseteq S$ is piecewise syndetic near $e$, and for $\mathcal{E}_i\subseteq\pi_i[I^\Diamond]$ if $e\in cl_T(\mathcal{E}_i)$, then $\overline{e}\in cl_{T^l}\big(\pi_i^{-1}(\mathcal{E}_i)\big)$ where $\pi_i:I^\Diamond\rightarrow\pi_i[I^\Diamond]$ is the natural projection homomorphism onto the $i^{th}$ coordinate. Let $B\subseteq S$.
\begin{enumerate}
\item If $B\subseteq S$ is $central^*$ near $e$, then $B^l\cap I^\Diamond\subseteq I^\Diamond$ is $central^*$ near $\overline{e}$.
\item If $B\subseteq S$ is $PS^*$ near $e$, then $B^l\cap I^\Diamond\subseteq I^\Diamond$ is piecewise syndetic near $\overline{e}$. 
\end{enumerate}
\end{theorem}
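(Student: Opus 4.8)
The plan is to deduce both parts from Lemma~\ref{3.3}, applied to the partition regular properties ``central near idempotent'' and ``piecewise syndetic near idempotent'' (their partition regularity is recorded via Definition~\ref{2.3} and Lemma~\ref{2.18}). To invoke Lemma~\ref{3.3} I must verify its hypothesis~(1), namely that the projections of a large subset of $I^\Diamond$ are large \emph{in $S$}. However, Lemma~\ref{3.5} only delivers largeness of $\pi_i[A]$ inside the image subsemigroup $\pi_i[I^\Diamond]$, and closing this gap between $\pi_i[I^\Diamond]$ and $S$ is the main obstacle; this is exactly the place where the hypothesis that each $\pi_i[I^\Diamond]$ is piecewise syndetic near $e$ enters.

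\emph{Bridge step.} Fix $i$ and set $D=\pi_i[I^\Diamond]$. Continuity of $\pi_i$ and $\overline{e}\in cl_{T^l}(I^\Diamond)$ give $e\in cl_T(D)$, so $D$ is a near $e$ subsemigroup of $T$; identifying $\beta D$ with $cl_{\beta S}(D)$, one checks from the definitions that $e_D^*=e_S^*\cap cl_{\beta S}(D)$, so $e_D^*$ is a compact subsemigroup of $e_S^*$. Since $D$ is piecewise syndetic near $e$, Lemma~\ref{2.18} gives $\emptyset\neq K(e_S^*)\cap cl_{\beta S}(D)=K(e_S^*)\cap e_D^*$. As $K(e_S^*)$ is an ideal of $e_S^*$ and $e_D^*$ is a subsemigroup, $K(e_S^*)\cap e_D^*$ is an ideal of $e_D^*$, whence $K(e_D^*)\subseteq K(e_S^*)$. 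It follows that any subset of $D$ that is central (resp.\ piecewise syndetic) near $e$ in $D$ is also central (resp.\ piecewise syndetic) near $e$ in $S$: a witnessing idempotent in $K(e_D^*)$ lies in $K(e_S^*)$, and a witnessing point of $K(e_D^*)\cap cl_{\beta D}(C)$ lies in $K(e_S^*)\cap cl_{\beta S}(C)$.

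\emph{Part (1).} By Lemma~\ref{3.5}(1), for every set $A$ central near $\overline{e}$ in $I^\Diamond$ and every $i$, $\pi_i[A]$ is central near $e$ in $\pi_i[I^\Diamond]$; the bridge step upgrades this to central near $e$ in $S$. This is precisely hypothesis~(1) of Lemma~\ref{3.3} for $\varepsilon=$ ``central near idempotent'', so the implication $(1)\Rightarrow(2)$ of that lemma, applied to the $central^*$ set $B$ near $e$, shows that $B^l\cap I^\Diamond$ is $central^*$ near $\overline{e}$.

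\emph{Part (2).} Identically, Lemma~\ref{3.5}(2) and the bridge step show that the projections of any set piecewise syndetic near $\overline{e}$ in $I^\Diamond$ are piecewise syndetic near $e$ in $S$, verifying hypothesis~(1) of Lemma~\ref{3.3} for $\varepsilon=$ ``piecewise syndetic near idempotent''. Hence $B^l\cap I^\Diamond$ is $PS^*$ near $\overline{e}$. Finally, by Lemma~\ref{2.18} together with the closure of piecewise syndeticity under supersets, a set is $PS^*$ near $\overline{e}$ if and only if $K(\overline{e}_{I^\Diamond}^*)\subseteq cl_{\beta I^\Diamond}(B^l\cap I^\Diamond)$; since $K(\overline{e}_{I^\Diamond}^*)\neq\emptyset$, this forces $K(\overline{e}_{I^\Diamond}^*)\cap cl_{\beta I^\Diamond}(B^l\cap I^\Diamond)\neq\emptyset$, i.e.\ $B^l\cap I^\Diamond$ is piecewise syndetic near $\overline{e}$, as claimed.
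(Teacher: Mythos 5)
Your proposal is correct and follows essentially the same route as the paper: the paper's proof opens with exactly your bridge step (using Lemma~\ref{2.18} to get $K(e_S^*)\cap cl_{\beta S}(\pi_i[I^\Diamond])\neq\emptyset$ and then citing \cite[Theorem 1.65]{hs}, whose ideal argument you re-prove by hand) and then verifies hypothesis (1) of Lemma~\ref{3.3} via Lemma~\ref{3.5}, just as you do. Your explicit closing deduction in part (2) of ``piecewise syndetic near $\overline{e}$'' from ``$PS^*$ near $\overline{e}$'' is a detail the paper leaves implicit, since Lemma~\ref{3.3} in fact yields the stronger $PS^*$ conclusion.
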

\begin{proof}
Let $i\in\{1,2,\ldots,l\}$ and pick $p\in K(e_S^*)\cap cl_{\beta S}(\pi_i[I^\Diamond])$. Then for each $U\in\tau_e$, $U\cap\pi_i[I^\Diamond]=(U\cap S)\cap \pi_i[I^\Diamond]\in p$, and therefore using the natural inclusion of $e_{\pi_i[I^\Diamond]}^*$ inside $e_S^*$ we have that $p\in e_{\pi_i[I^\Diamond]}^*$. Thus $e_{\pi_i[I^\Diamond]}^*\cap K(e_S^*)\neq\emptyset$ and therefore by \cite[Theorem 1.65]{hs}, we have that $K\big(e_{\pi_i[I^\Diamond]}^*\big)\subseteq K(e_S^*)$.

\begin{enumerate}
\item By Lemma \ref{3.3} it suffices to show that whenever $A\subseteq I^\Diamond$ is central near $\overline{e}$, then $\pi_i[A]\subseteq S$ is central near $e$ for each $i\in\{1,2,\ldots,l\}$. Fix $i\in\{1,2,\ldots,l\}$ and let $A\subseteq I^\Diamond$ be central near $\overline{e}$. Then by Lemma \ref{3.5}, $\pi_i[A]\subseteq\pi_i[I^\Diamond]$ is central near $e$. So pick an idempotent $p\in K\big(e_{\pi_i[I^\Diamond]}^*\big)\cap cl_{\beta(\pi_i[I^\Diamond])}(\pi_i[A])$ and consequently we have that $p\in K(e_S^*)\cap cl_{\beta S}(\pi_i[A])$.
\item By Lemma \ref{3.3} it suffices to show that whenever $A\subseteq I^\Diamond$ is piecewise syndetic near $\overline{e}$, then $\pi_i[A]\subseteq S$ is piecewise syndetic near $e$ for each $i\in\{1,2,\ldots,l\}$. Fix $i\in\{1,2,\ldots,l\}$ and let $A\subseteq I^\Diamond$ be piecewise syndetic near $\overline{e}$. Then by Lemma \ref{3.5}, $\pi_i[A]\subseteq\pi_i[I^\Diamond]$ is piecewise syndetic near $e$. So $K\big(e_{\pi_i[I^\Diamond]}^*\big)\cap cl_{\beta(\pi_i[I^\Diamond])}(\pi_i[A])\neq\emptyset$ and consequently we have that $K(e_S^*)\cap cl_{\beta S}(\pi_i[A])\neq\emptyset$. 
\end{enumerate}
\end{proof}

From now onwards, let $S$ be a near $e$ subsemigroup of a semitopological semigroup $(T,+)$, let $l\in\mathbb{N}$ and let $E^\Diamond\subseteq S^l$ be a near $\overline{e}$ subsemigroup of $T^l$ such that $\{\overline{a}:a\in S\}\subseteq E^\Diamond$. Moreover, let $I^\Diamond$ be a near $\overline{e}$ ideal of $E^\Diamond$ such that for each $i\in\{1,2,\ldots, l\}$ and for $\mathcal{E}_i\subseteq\pi_i[I^\Diamond]$, if $e\in cl_T(\mathcal{E}_i)$, then $\overline{e}\in cl_{T^l}\big(\pi_i^{-1}(\mathcal{E}_i)\big)$ where $\pi_i:I^\Diamond\rightarrow\pi_i[I^\Diamond]$ is the natural projection homomorphism onto the $i^{th}$ coordinate. Note that when we say $I^\Diamond$ is a near $\overline{e}$ ideal of $E^\Diamond$, we mean that $I^\Diamond$ is an ideal of $E^\Diamond$ which is also a near $\overline{e}$ subsemigroup of $T^l$.

\begin{definition}
Let $X=(\beta S)^l, Y=(e_S^*)^l$ are with product topology and the coordinatewise operation. Then $E=cl_XE^\Diamond, I=cl_XI^\Diamond, E_0=E\cap Y$ and $I_0=I\cap Y$.
\end{definition}
\begin{lemma}\label{3.8}
X is a compact right topological semigroup, for each $\vec{x}\in S^l$, $\lambda_{\vec{x}}:X\rightarrow X$ given by $\lambda_{\vec{x}}(\vec{p})=\vec{x}+\vec{p},\,\vec{p}\in X$ is continuous, $Y$ is a subsemigroup of $X$, $E_0$ is a subsemigroup of $Y$, $I_0$ is an ideal of $E_0$, and $K(Y)=(K(e_S^*))^l$.
\end{lemma}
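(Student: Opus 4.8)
The plan is to verify the seven assertions in turn, using that $\beta S$ is a compact right topological semigroup in which $S$ lies in the topological center \cite[Theorem 4.1]{hs}, together with the fact, established in the lemma showing that $e_S^*$ is a compact subsemigroup of $\beta S$, that $e_S^*$ is a subsemigroup. For the first two assertions I would note that $X=(\beta S)^l$ is compact by Tychonoff's theorem, and that for fixed $\vec{q}\in X$ the map $\rho_{\vec{q}}:\vec{p}\mapsto \vec{p}+\vec{q}$ is continuous because it is computed coordinatewise and each coordinate map $p_i\mapsto p_i+q_i$ is continuous in $\beta S$; hence $X$ is a compact right topological semigroup. Likewise, for $\vec{x}\in S^l$ the map $\lambda_{\vec{x}}$ is continuous because $S$ lies in the topological center of $\beta S$, so each coordinate translation $p_i\mapsto x_i+p_i$ is continuous. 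The third assertion, that $Y=(e_S^*)^l$ is a subsemigroup of $X$, is then immediate from the coordinatewise operation, since $e_S^*$ is a subsemigroup of $\beta S$.

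For the fourth and fifth assertions I would run the standard two-step closure argument adapted to the power $X$, since we cannot directly quote the Stone--\v{C}ech result that the closure of a discrete subsemigroup is a subsemigroup, as $X=(\beta S)^l$ is not of the form $\beta(S^l)$. First, using continuity of $\lambda_{\vec{a}}$ for $\vec{a}\in E^\Diamond\subseteq S^l$, I get $\vec{a}+E=\lambda_{\vec{a}}[cl_X E^\Diamond]\subseteq cl_X(\vec{a}+E^\Diamond)\subseteq cl_X E^\Diamond=E$, so $E^\Diamond+E\subseteq E$. Then, fixing $\vec{q}\in E$ and using continuity of $\rho_{\vec{q}}$, I get $E+\vec{q}=\rho_{\vec{q}}[cl_X E^\Diamond]\subseteq cl_X(E^\Diamond+\vec{q})\subseteq cl_X E=E$, whence $E+E\subseteq E$ and $E$ is a subsemigroup; intersecting with the subsemigroup $Y$ yields $E_0=E\cap Y$ a subsemigroup. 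Repeating the same two-step argument with $I^\Diamond$ in place of one factor, and using that $I^\Diamond$ is an ideal of $E^\Diamond$ (so $\vec{a}+I^\Diamond\subseteq I^\Diamond$ and $I^\Diamond+\vec{a}\subseteq I^\Diamond$ for $\vec{a}\in E^\Diamond$), shows that $I=cl_X I^\Diamond$ absorbs $E$ on both sides, so $I$ is an ideal of $E$, and therefore $I_0=I\cap Y$ is an ideal of $E_0$. For nonemptiness of $I_0$ I would observe that $\overline{e}_{I^\Diamond}^*$ is nonempty, since $I^\Diamond$ is a near $\overline{e}$ subsemigroup, and identify it inside $I\cap Y$.

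The sixth assertion, $K(Y)=(K(e_S^*))^l$, is the crux, and here I would appeal directly to \cite[Theorem 2.23]{hs}: since $e_S^*$ is a compact right topological semigroup, the smallest ideal of the finite power $Y=(e_S^*)^l$ is the corresponding power of the smallest ideal, $(K(e_S^*))^l$. This is exactly the instance of that theorem already invoked in the proof of Theorem \ref{2.9}, now taken with all $l$ factors equal to $e_S^*$.

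I expect the main obstacle to be precisely the closure arguments of the fourth and fifth assertions: because $X$ is the power $(\beta S)^l$ rather than $\beta(S^l)$, the subsemigroup and ideal properties must be re-derived from the product's right topological structure and the coordinatewise continuity of the translations $\lambda_{\vec{a}}$, rather than read off from an existing compactification result.
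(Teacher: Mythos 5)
Your route is structurally the same as the paper's: coordinatewise arguments give the statements about $X$, $\lambda_{\vec{x}}$ and $Y$; closure arguments give that $E$ is a subsemigroup of $X$ and $I$ an ideal of $E$, whence $E_0$ and $I_0$ inherit these properties upon intersecting with $Y$; and $K(Y)=(K(e_S^*))^l$ comes from \cite[Theorem 2.23]{hs}, exactly as in the paper. One difference is economy rather than substance: your two-step closure argument is correct, but it is precisely the proof of \cite[Theorem 2.22]{hs}, which is stated for an arbitrary compact right topological semigroup and a subsemigroup contained in its topological center, and so applies verbatim to $X=(\beta S)^l$ with $E^\Diamond\subseteq S^l\subseteq\Lambda(X)$. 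Your worry that such results cannot be quoted because $X$ is not of the form $\beta(S^l)$ only applies to the $\beta S$-specific closure theorems; the paper simply cites \cite[Theorems 2.22, 2.23 and 4.17]{hs} for all of these structural assertions.

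The genuine weak point is the step you compress into ``identify $\overline{e}_{I^\Diamond}^*$ inside $I\cap Y$'': this is the \emph{entire} content of the paper's proof, and as stated it is a gap. An element $p\in\overline{e}_{I^\Diamond}^*$ is an ultrafilter on $I^\Diamond$, not a point of $X$, so one must transport it by the continuous extension $\tilde{\iota}:\beta I^\Diamond\rightarrow X$ of the inclusion $I^\Diamond\hookrightarrow X$. The membership $\tilde{\iota}(p)\in I=cl_X I^\Diamond$ is then immediate, but $\tilde{\iota}(p)\in Y=(e_S^*)^l$ is not: one has to check that every coordinate of $\tilde{\iota}(p)$ lies in $e_S^*$. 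The paper does this by observing that $proj_i\circ\tilde{\iota}=\widetilde{\pi}_i$ (both sides are continuous and agree on the dense subset $I^\Diamond$ of $\beta I^\Diamond$), and that for every $U\in\tau_e$ one has $U^l\cap I^\Diamond\in p$, hence $U\cap S\supseteq\pi_i\big[U^l\cap I^\Diamond\big]$ belongs to $\widetilde{\pi}_i(p)$, so that $\widetilde{\pi}_i(p)\in e_S^*$; only then is $\tilde{\iota}(p)\in I\cap Y=I_0$. With that verification supplied your plan is complete; note also that it settles nonemptiness of $E_0$ for free, since $I_0\subseteq E_0$ --- a point you need (a subsemigroup must be nonempty) but did not address, as your nonemptiness remark concerned $I_0$ only.
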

\begin{proof}
It suffices to show that $E_0\neq\emptyset$ and $I_0\neq\emptyset$. Remaining of the proof follows from \cite[Theorem 2.22, 2.23 and 4.17]{hs}. Let $\tilde{\iota}:\beta E^\Diamond\rightarrow X$ be the continuous extension the natural inclusion map $\iota:E^\Diamond\rightarrow X$. Now we prove that $E_0\neq\emptyset$ and the proof for $I_0$ is same as of $E_0$. Pick $p\in\overline{e}_{E^\Diamond}^*$. Following the proof of \cite[Theorem 3.27]{hs} we then have that $\tilde{\iota}(p)\in\bigcap_{A\in p}cl_X (\iota[A])$. In particular, $\tilde{\iota}(p)\in cl_X (\iota[E^\Diamond])=E$. Let $i\in\{1,2,\ldots,l\}$ and let $\pi_i:E^\Diamond\rightarrow \pi_i[E^\Diamond]$ and $proj_i:X\rightarrow\beta S$ be the natural projection maps onto the $i^{th}$ coordinates of $E^\Diamond$ and $X$, respectively. If $\widetilde{\pi}_i:\beta(E^\Diamond)\rightarrow\beta (\pi_i[E^\Diamond])\subseteq\beta S$ is the continuous extension of $\pi_i$, then $proj_i\circ\tilde{\iota}=\widetilde{\pi}_i$ as both the sides agree on a dense set $E^\Diamond$ of $\beta E^\Diamond$. Now, for each $U\in\tau_e$ we have that $U^l\cap E^\Diamond\in p$, which implies $\pi_i[U^l\cap E^\Diamond]\in\widetilde{\pi}_i(p)=proj_i(\tilde{\iota}(p))$, i.e. $U\cap\pi_i[E^\Diamond]\in proj_i(\tilde{\iota}(p))$, and therefore $U\cap S\in proj_i(\tilde{\iota}(p))$. Hence $proj_i(\tilde{\iota}(p))\in e_S^*$ and thus $\tilde{\iota}(p)\in E\cap Y=E_0$. 
\end{proof}
\begin{lemma}\label{3.9}
Let $p\in K(e_S^*)$. Then $\overline{p}\in K(I_0)=(K(e_S^*))^l\cap {E_0}$.
\end{lemma}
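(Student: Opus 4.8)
The plan is to exhibit $\overline{p}$ as an element of the smallest ideal of the closed subsemigroup $I_0$ of $Y$, and then to apply \cite[Theorem 1.65]{hs} (exactly as in the proof of Theorem \ref{3.6}) to identify $K(I_0)$ with $I_0\cap K(Y)$. Two memberships are immediate from Lemma \ref{3.8}. First, since $p\in K(e_S^*)$, every coordinate of $\overline{p}$ lies in $K(e_S^*)$, so $\overline{p}\in (K(e_S^*))^l=K(Y)$; in particular $\overline{p}\in Y$. Second, I claim $\overline{p}\in E_0=E\cap Y$. For this consider the diagonal map $d:\beta S\rightarrow X$ given by $d(q)=\overline{q}$, which is continuous because $proj_i\circ d$ is the identity on $\beta S$ for each $i$. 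Since $\{\overline{a}:a\in S\}=d[S]\subseteq E^\Diamond$ and $S$ is dense in $\beta S$, continuity yields $d[\beta S]\subseteq cl_X(d[S])\subseteq cl_X(E^\Diamond)=E$; hence $\overline{p}=d(p)\in E$, and combined with $\overline{p}\in Y$ we get $\overline{p}\in E_0$.

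The crux is to upgrade $\overline{p}\in E_0$ to $\overline{p}\in I_0$, which is \emph{not} available by the same density argument because $I^\Diamond$ need not contain any diagonal element $\overline{a}$. Instead I route through the minimal ideal of $E_0$. Since $Y$ is a compact right topological semigroup (it is a subsemigroup of $X$ by Lemma \ref{3.8} and is compact, being a product of copies of the compact semigroup $e_S^*$) and $E_0$ is a closed subsemigroup of $Y$ (as $E_0=E\cap Y$ with $E=cl_X E^\Diamond$ closed) with $\overline{p}\in E_0\cap K(Y)\neq\emptyset$, \cite[Theorem 1.65]{hs} gives $K(E_0)=E_0\cap K(Y)$; in particular $\overline{p}\in K(E_0)$. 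Now $I_0$ is an ideal of $E_0$ by Lemma \ref{3.8}, and the smallest ideal $K(E_0)$ is contained in every ideal of $E_0$, so $K(E_0)\subseteq I_0$. Therefore $\overline{p}\in I_0$.

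It remains to apply \cite[Theorem 1.65]{hs} once more, now to the closed subsemigroup $I_0$ of $Y$: since $\overline{p}\in I_0\cap K(Y)\neq\emptyset$, we obtain $K(I_0)=I_0\cap K(Y)=I_0\cap (K(e_S^*))^l$, and hence $\overline{p}\in K(I_0)$. Finally, for the stated identity, note that $I_0\subseteq E_0$ gives $K(I_0)=I_0\cap (K(e_S^*))^l\subseteq E_0\cap (K(e_S^*))^l$; conversely $E_0\cap (K(e_S^*))^l=E_0\cap K(Y)=K(E_0)\subseteq I_0$, so $E_0\cap (K(e_S^*))^l\subseteq I_0\cap (K(e_S^*))^l=K(I_0)$, whence $K(I_0)=(K(e_S^*))^l\cap E_0$. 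The only genuine obstacle is this middle step: recognizing that membership of $\overline{p}$ in $I_0$ should be obtained not by a closure argument on $I^\Diamond$ but by first placing $\overline{p}$ in $K(E_0)$, which is then automatically absorbed by the ideal $I_0$.
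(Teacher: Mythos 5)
Your proof is correct and follows essentially the same route as the paper: both arguments first place $\overline{p}$ in $E_0\cap K(Y)$ (your continuous diagonal embedding is just a repackaging of the paper's neighborhood argument, which likewise rests on $\{\overline{a}:a\in S\}\subseteq E^\Diamond$), then invoke \cite[Theorem 1.65]{hs} to get $K(E_0)=E_0\cap K(Y)$, absorb $K(E_0)$ into the ideal $I_0$, and apply Theorem 1.65 once more to identify $K(I_0)$. The only cosmetic difference is that the paper applies the second instance of Theorem 1.65 to $I_0$ as a subsemigroup of $E_0$ (giving $K(I_0)=K(E_0)$ outright), whereas you apply it to $I_0$ inside $Y$ and recover the same identity by a short sandwich argument.
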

\begin{proof}
Observe first that $\overline{p}\in E$. To see this let $U$ be a neighborhood of $\overline{p}$ in $X$ and for each $i\in\{1,2,\ldots,l\}$, pick $A_i\in p$ such that $\bigtimes_{i=1}^lcl_{\beta S}(A_i)\subseteq U$. Then $\bigcap_{i=1}^l A_i\in q$ so pick $a\in\bigcap_{i=1}^l A_i$. Then $\overline{a}\in E^\Diamond\cap U$.

Thus $\overline{p}\in (K(e_S^*))^l\cap E=(K(e_S^*))^l\cap E_0$. Since, by Lemma \ref{3.8}, $K(Y)=(K(e_S^*))^l$ we thus have that $K(Y)\cap E_0\neq\emptyset$. Thus, by \cite[Theorem 1.65]{hs} $K(E_0)=K(Y)\cap E_0=(K(e_S^*))^l\cap E_0$. 

Since $I_0$ is an ideal of $E_0$, $K(E_0)\subseteq I_0$. Consequently, again by \cite[Theorem 1.65]{hs}, $K(I_0)=K(E_0)=(K(e_S^*))^l\cap E_0$. Thus $\overline{p}\in K(I_0)$.
\end{proof}
Now $I^\Diamond$ itself is a near $\overline{e}$ subsemigroup of $T^l$, and thus $\beta I^\Diamond$ is a compact right topological semigroup.

\begin{definition}
$\iota:I^\Diamond\rightarrow I^\Diamond\subseteq I$ is the identity function and $\tilde{\iota}:\beta I^\Diamond\rightarrow I$ is its continuous extension.
\end{definition}

\begin{lemma}\label{3.12}
The function $\tilde{\iota}$ is a homomorphism and $\tilde{\iota}[K(\overline{e}_{I^\Diamond}^*)]=K(I_0)$.
\end{lemma}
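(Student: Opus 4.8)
The plan is to realize $\tilde{\iota}|_{\overline{e}_{I^\Diamond}^*}$ as a continuous surjective homomorphism from the compact right topological semigroup $\overline{e}_{I^\Diamond}^*$ onto $I_0$, and then invoke \cite[Exercise 1.7.3]{hs} to transport smallest ideals. For the homomorphism claim, I would first observe that $\iota$ is the inclusion of the subsemigroup $I^\Diamond$ (with its coordinatewise operation inherited from $S^l$) into $X$. By Lemma \ref{3.8}, $X$ is a compact right topological semigroup in which $\lambda_{\vec{x}}$ is continuous for every $\vec{x}\in S^l$, so $\iota[I^\Diamond]\subseteq S^l$ lies in the topological center of $X$. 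Hence \cite[Corollary 4.22]{hs} applies and $\tilde{\iota}$ is a homomorphism; its restriction to the subsemigroup $\overline{e}_{I^\Diamond}^*$ is then a homomorphism as well.

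Next I would show $\tilde{\iota}[\overline{e}_{I^\Diamond}^*]\subseteq I_0$. Fix $p\in\overline{e}_{I^\Diamond}^*$. Since $I^\Diamond\in p$ and $\tilde{\iota}(p)\in\bigcap_{A\in p}cl_X(A)$ (exactly as in the proof of Lemma \ref{3.8}), we get $\tilde{\iota}(p)\in cl_X(I^\Diamond)=I$. To obtain $\tilde{\iota}(p)\in Y$, I would reuse the coordinatewise argument of Lemma \ref{3.8}: writing $proj_i:X\rightarrow\beta S$ for the $i$-th coordinate projection, one has $proj_i\circ\tilde{\iota}=\widetilde{\pi}_i$, and for each $U\in\tau_e$ the membership $U^l\cap I^\Diamond\in p$ forces $U\cap S\in proj_i(\tilde{\iota}(p))$, whence $proj_i(\tilde{\iota}(p))\in e_S^*$ for every $i$. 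Thus $\tilde{\iota}(p)\in I\cap Y=I_0$.

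The hard part will be surjectivity onto $I_0$, and it is the step I expect to be the main obstacle. Given $\vec{q}\in I_0$, note first that $\vec{q}\in cl_X(I^\Diamond)$ gives $q_i\in cl_{\beta S}(\pi_i[I^\Diamond])$, so $\pi_i[I^\Diamond]\in q_i$ and $q_i$ restricts to an ultrafilter on $\pi_i[I^\Diamond]$; moreover $q_i\in e_S^*$. I would set $\mathcal{A}=\{\pi_i^{-1}(A_i):i\in\{1,\ldots,l\},\,A_i\in q_i,\,A_i\subseteq\pi_i[I^\Diamond]\}$ and $\mathcal{R}=\{A\subseteq I^\Diamond:\overline{e}\in cl_{T^l}(A)\}$, aiming to apply \cite[Theorem 3.11]{hs}. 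The crux is verifying that every finite intersection from $\mathcal{A}$ lies in $\mathcal{R}$: grouping by coordinate, such an intersection equals $I^\Diamond\cap\bigtimes_{i=1}^l B_i$ with $B_i\in q_i$ and $B_i\subseteq\pi_i[I^\Diamond]$. For a basic neighborhood $\bigtimes_{i=1}^l V$ of $\overline{e}$ with $V\in\tau_e$, each $V\cap B_i\in q_i$, so $\vec{q}\in\bigtimes_{i=1}^l cl_{\beta S}(V\cap B_i)$, an $X$-neighborhood of $\vec{q}$; since $\vec{q}\in cl_X(I^\Diamond)$ this neighborhood meets $I^\Diamond$, producing $\vec{a}\in I^\Diamond$ with $a_i\in V\cap B_i$ for all $i$. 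This yields $\overline{e}\in cl_{T^l}\big(I^\Diamond\cap\bigtimes_{i=1}^l B_i\big)$, so the intersection is in $\mathcal{R}$.

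Having checked this, \cite[Theorem 3.11]{hs} produces $p\in\beta I^\Diamond$ with $\mathcal{A}\subseteq p\subseteq\mathcal{R}$. Then $p\subseteq\mathcal{R}$ gives $p\in\overline{e}_{I^\Diamond}^*$, while $\mathcal{A}\subseteq p$ gives $q_i\subseteq\widetilde{\pi}_i(p)$ on $\pi_i[I^\Diamond]$, hence $\widetilde{\pi}_i(p)=q_i$ for each $i$ since both are ultrafilters, i.e. $\tilde{\iota}(p)=\vec{q}$. This establishes that $\tilde{\iota}|_{\overline{e}_{I^\Diamond}^*}:\overline{e}_{I^\Diamond}^*\rightarrow I_0$ is onto. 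Since $\overline{e}_{I^\Diamond}^*$ and $I_0$ are compact right topological semigroups and this restriction is a continuous surjective homomorphism, \cite[Exercise 1.7.3]{hs} gives $\tilde{\iota}[K(\overline{e}_{I^\Diamond}^*)]=K(I_0)$, as required.
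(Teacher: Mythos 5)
Your proof is correct and takes essentially the same approach as the paper: realize $\tilde{\iota}|_{\overline{e}_{I^\Diamond}^*}$ as a surjective homomorphism onto $I_0$ and apply \cite[Exercise 1.7.3]{hs}, with surjectivity obtained from an ultrafilter on $I^\Diamond$ generated by coordinatewise preimages of members of the $q_i$. The only differences are in packaging: the paper gets the homomorphism property from \cite[Lemma 2.14]{hs} rather than \cite[Corollary 4.22]{hs}, and for surjectivity it takes any ultrafilter containing the finite-intersection family $\big\{(A_1\times A_2\times\cdots\times A_l)\cap I^\Diamond : A_j\in q_j \text{ for each } j\big\}$ and verifies $p\in\overline{e}_{I^\Diamond}^*$ directly at the end, whereas you route the same construction through \cite[Theorem 3.11]{hs} with the partition regular family $\mathcal{R}$, exactly as the paper itself does in Lemmas \ref{2.7}, \ref{2.8} and \ref{Lemma 3.4}.
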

\begin{proof}
Let $i\in\{1,2,\ldots,l\}$. Let $proj_i:X\rightarrow\beta S$ be the natural projection map onto the $i^{th}$ coordinates of $X$. If $\widetilde{\pi}_i:\beta I^\Diamond\rightarrow\beta(\pi_i[I^\Diamond])\subseteq\beta S$ is the continuous extension of $\pi_i$, then $\widetilde{\pi}_i=proj_i\circ\tilde{\iota}$ as both sides agree on a dense set $I^\Diamond$ of $\beta I^\Diamond$.

By \cite[Lemma 2.14]{hs}, $\tilde{\iota}$ is a homomorphism. Now we show that $\tilde{\iota}[\overline{e}_{I^\Diamond}^*]=I_0$. To see this, first let $p\in\overline{e}_{I^\Diamond}^*$. Then $\tilde{\iota}(p)\in I$, $proj_i(\tilde{\iota}(p))=\widetilde{\pi}_i(p)$ and by Lemma \ref{Lemma 3.4} we have that $\widetilde{\pi}_i(p)\in e_{\pi_i[I^\Diamond]}^*\subseteq e_S^*$. Hence $proj_i(\tilde{\iota}(p))\in e_S^*$ and thus $\tilde{\iota}(p)\in Y$.

Conversely, let $(p_1,p_2,\ldots,p_l)\in I_0$. For each $j\in\{1,2,\ldots,l\}$, if $A_j\in p_j$, then $\bigtimes_{j=1}^l cl_{\beta S}(A_j)$ is a neighborhood of $(p_1,p_2,\ldots,p_l)$ in $X$ and therefore $\big(\bigtimes_{j=1}^l A_j\big)\cap I^\Diamond=\big(\bigtimes_{j=1}^l cl_{\beta S}(A_j)\big)\cap I^\Diamond\neq\emptyset$. Moreover, the family $\big\{\big(\bigtimes_{j=1}^l A_j\big)\cap I^\Diamond: A_j\in p_j \text{ for each } j\in\{1,2,\ldots,l\}\big\}$ has finite intersection property, hence pick $p\in\beta I^\Diamond$ such that $\big\{\big(\bigtimes_{j=1}^l A_j\big)\cap I^\Diamond: A_j\in p_j \text{ for each } j\in\{1,2,\ldots,l\}\big\}\subseteq p$. We claim that $\tilde{\iota}(p)=(p_1,p_2,\ldots,p_l)$ for which it suffices to show that $proj_i\circ\tilde{\iota}(p)=p_i$, i.e. $\widetilde{\pi}_i(p)=p_i$. To prove the claim, let $B_i\in p_i$. Then $\pi_i^{-1}(B_i)=\big(\bigtimes_{j=1}^l B_j\big)\cap I^\Diamond\in p$, where $B_j:=S$ for each $j\neq i$, and therefore $B_i\in\widetilde{\pi}_i(p)$. Hence $p_i\subseteq\widetilde{\pi}_i(p)$, and both being ultrafilters, we have that $p_i=\widetilde{\pi}_i(p)$. Now we show that $p\in\overline{e}_{I^\Diamond}^*$. For any $U\in\tau_{\overline{e}}$ pick $V\in\tau_e$ such that $V^l\subseteq U$. Then $V\cap S\in p_j$ for each $j\in\{1,2,\ldots,l\}$, so $V^l\cap I^\Diamond=(V\cap S)^l\cap I^\Diamond\in p$ and therefore $U\cap I^\Diamond\in p$. Thus $p\in\overline{e}_{I^\Diamond}^*$. 

Therefore $\tilde{\iota}|_{\overline{e}_{I^\Diamond}^*}: {\overline{e}_{I^\Diamond}^*}\rightarrow I_0$ is a surjective homomorphism. Now, by \cite[Exercise 1.7.3]{hs}, we have that $\tilde{\iota}[K(\overline{e}_{I^\Diamond}^*)]=K(I_0)$.
\end{proof}

Now we recall \cite[Lemma 3.6]{bh} which will be used subsequently.
\begin{lemma}\label{3.11}
Let $B\subseteq S$. If $r\in \beta I^\Diamond$ and $\tilde{\iota}(r)\in (cl_{\beta S}B)^l$. Then $B^l\cap I^\Diamond \in r$.
\end{lemma}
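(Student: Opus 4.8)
The plan is to decode the hypothesis coordinatewise and then reassemble the pieces using only the fact that $r$ is an ultrafilter. The relation $\tilde{\iota}(r)\in(cl_{\beta S}B)^l$ says precisely that for every $i\in\{1,2,\ldots,l\}$ the $i^{th}$ coordinate $proj_i(\tilde{\iota}(r))$ of $\tilde{\iota}(r)$ lies in $cl_{\beta S}B$, that is, $B\in proj_i(\tilde{\iota}(r))$ when this point is regarded as an ultrafilter on $S$. I would begin by invoking the identity $proj_i\circ\tilde{\iota}=\widetilde{\pi}_i$ established in the proof of Lemma \ref{3.12} (both sides agree on the dense set $I^\Diamond$ of $\beta I^\Diamond$), so that the hypothesis becomes $B\in\widetilde{\pi}_i(r)$ for each $i$, where $\widetilde{\pi}_i(r)$ is viewed inside $\beta S$ via the inclusion $\beta(\pi_i[I^\Diamond])\subseteq\beta S$.

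Next I would translate this membership into a statement purely about $r$. By \cite[Lemma 3.30]{hs} we have $\widetilde{\pi}_i(r)=\{A_i\subseteq\pi_i[I^\Diamond]:\pi_i^{-1}(A_i)\in r\}$, and the identification of $\beta(\pi_i[I^\Diamond])$ as a subspace of $\beta S$ means that $B\in\widetilde{\pi}_i(r)$ as an ultrafilter on $S$ is equivalent to $B\cap\pi_i[I^\Diamond]\in\widetilde{\pi}_i(r)$ as an ultrafilter on $\pi_i[I^\Diamond]$. Hence the hypothesis yields $\pi_i^{-1}\big(B\cap\pi_i[I^\Diamond]\big)\in r$ for each $i$. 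Since $\pi_i^{-1}\big(B\cap\pi_i[I^\Diamond]\big)=\{x\in I^\Diamond:\pi_i(x)\in B\}$, writing $C_i:=\{x\in I^\Diamond:\pi_i(x)\in B\}$ gives $C_i\in r$ for every $i\in\{1,2,\ldots,l\}$.

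Finally I would assemble these: because $r$ is an ultrafilter it is closed under finite intersections, so $\bigcap_{i=1}^l C_i\in r$. A direct computation shows $\bigcap_{i=1}^l C_i=\{x\in I^\Diamond:\pi_i(x)\in B\text{ for all }i\}=B^l\cap I^\Diamond$, which completes the argument. The proof is essentially a bookkeeping exercise, so there is no deep obstacle; the only point requiring care is keeping straight the two guises of $\widetilde{\pi}_i(r)$---as an ultrafilter on $\pi_i[I^\Diamond]$ and as an ultrafilter on $S$---and correctly intersecting with $\pi_i[I^\Diamond]$ when passing between them. Once that identification is handled cleanly, the coordinatewise statements combine immediately into the desired membership $B^l\cap I^\Diamond\in r$.
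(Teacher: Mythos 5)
Your proof is correct, and it is worth noting that the paper itself offers no argument at all for this statement: Lemma \ref{3.11} is simply recalled from \cite[Lemma 3.6]{bh}, so you have supplied a self-contained proof where the paper defers to a citation. Your route is sound at every step: the identity $proj_i\circ\tilde{\iota}=\widetilde{\pi}_i$ is exactly what is recorded in the proof of Lemma \ref{3.12}, the description $\widetilde{\pi}_i(r)=\{A_i\subseteq\pi_i[I^\Diamond]:\pi_i^{-1}(A_i)\in r\}$ is the correct use of \cite[Lemma 3.30]{hs}, your handling of the two guises of $\widetilde{\pi}_i(r)$ (an ultrafilter on $\pi_i[I^\Diamond]$ versus its image in $\beta S$, related by intersecting members with $\pi_i[I^\Diamond]$) is precisely the point that needs care and you get it right, and the final step $\bigcap_{i=1}^l\{x\in I^\Diamond:\pi_i(x)\in B\}=B^l\cap I^\Diamond$ together with closure of $r$ under finite intersections finishes it. For comparison, the argument behind the cited lemma in \cite{bh} can be phrased more topologically and slightly more briefly: since $B\subseteq S$ with $S$ discrete, $cl_{\beta S}B$ is clopen in $\beta S$, so $(cl_{\beta S}B)^l$ is clopen in $X=(\beta S)^l$; hence $\tilde{\iota}^{-1}\big[(cl_{\beta S}B)^l\big]$ is a clopen subset of $\beta I^\Diamond$ containing $r$, its trace on $I^\Diamond$ is exactly $B^l\cap I^\Diamond$, and a clopen set equals the closure of its trace on the dense set $I^\Diamond$, which forces $B^l\cap I^\Diamond\in r$. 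That version avoids the coordinatewise bookkeeping entirely; yours avoids the topological reasoning in favor of direct ultrafilter computations and reuses machinery the paper has already set up in Lemma \ref{3.12}. Either is a legitimate proof of the lemma.
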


\begin{theorem}\label{3.13}
Let $S$ be a near $e$ subsemigroup of a semitopological semigroup $(T,+)$, let $l\in\mathbb{N}$ and let $E^\Diamond\subseteq S^l$ be a near $\overline{e}$ subsemigroup of $T^l$ such that $\{\overline{a}:a\in S\}\subseteq E^\Diamond$. Let $I^\Diamond$ be a near $\overline{e}$ ideal of $E^\Diamond$ such that for each $i\in\{1,2,\ldots, l\}$ and for $\mathcal{E}_i\subseteq\pi_i[I^\Diamond]$, if $e\in cl_T(\mathcal{E}_i)$, then $\overline{e}\in cl_{T^l}\big(\pi_i^{-1}(\mathcal{E}_i)\big)$ where $\pi_i:I^\Diamond\rightarrow\pi_i[I^\Diamond]$ is the natural projection homomorphism onto the $i^{th}$ coordinate. Let $B\subseteq S$. 

\begin{enumerate}
\item If $B\subseteq S$ is central near $e$, then $B^l\cap I^\Diamond\subseteq I^\Diamond$ is central near $\overline{e}$.
\item Let $e$ have a countable local base in $T$. If $B\subseteq S$ is piecewise syndetic near $e$, then $B^l\cap I^\Diamond\subseteq I^\Diamond$ is piecewise syndetic near $\overline{e}$.
\item\label{3.13(3)} Let $e$ have a countable local base in $T$. If $B\subseteq S$ is $central^*$ near $e$, then $B^l\cap I^\Diamond\subseteq I^\Diamond$ is $central^*$ near $\overline{e}$.
\item\label{3.13(4)} Let $e$ have a countable local base in $T$. If $B\subseteq S$ is $PS^*$ near $e$, then $B^l\cap I^\Diamond\subseteq I^\Diamond$ is $PS^*$ near $\overline{e}$.
\item\label{3.13(5)} If $B\subseteq S$ is $IP^*$ near $e$, then $B^l\cap I^\Diamond\subseteq I^\Diamond$ is $IP^*$ near $\overline{e}$.
\end{enumerate}
\end{theorem}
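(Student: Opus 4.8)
I would split the five parts into two groups. Parts (1)--(2) are the \emph{positive} abundance statements, which I would prove by directly producing an ultrafilter on $I^\Diamond$ witnessing the largeness of $B^l\cap I^\Diamond$; parts (3)--(5) are the \emph{starred} abundance statements, which I would deduce from Lemma \ref{3.3} with $\varepsilon$ chosen to be central, $PS$, respectively $IP$ near idempotent (each partition regular by Definition \ref{2.3}, Lemma \ref{2.18} and Theorem \ref{Theorem 2.8}). The common engine is the factorization $\widetilde{\pi}_i=proj_i\circ\tilde{\iota}$ from Lemma \ref{3.12} together with the identity $K(I_0)=(K(e_S^*))^l\cap E_0$ of Lemma \ref{3.9}; this is precisely where the diagonal hypothesis $\{\overline a:a\in S\}\subseteq E^\Diamond$ pays off.

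For (1), I would fix an idempotent $p\in K(e_S^*)$ with $B\in p$, so that by Lemma \ref{3.9} the element $\overline p$ is an idempotent in $K(I_0)=\tilde{\iota}[K(\overline{e}_{I^\Diamond}^*)]$ (Lemma \ref{3.12}). I would then lift $\overline p$ to an idempotent $r\in K(\overline{e}_{I^\Diamond}^*)$ with $\tilde{\iota}(r)=\overline p$: since $\overline p$ is idempotent, $\tilde{\iota}^{-1}(\{\overline p\})$ is a compact subsemigroup; by surjectivity of $\tilde{\iota}$ on the minimal ideal it meets a minimal left ideal $L$ of $\overline{e}_{I^\Diamond}^*$, and the nonempty compact subsemigroup $L\cap\tilde{\iota}^{-1}(\{\overline p\})$ contains an idempotent $r\in L\subseteq K(\overline{e}_{I^\Diamond}^*)$. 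From $B\in p$ I get $\tilde{\iota}(r)=\overline p\in(cl_{\beta S}B)^l$, so Lemma \ref{3.11} delivers $B^l\cap I^\Diamond\in r$, proving $B^l\cap I^\Diamond$ central near $\overline e$. For (2), I would instead pick $p\in K(e_S^*)\cap cl_{\beta S}B$ via Lemma \ref{2.18} (noting $\overline e$ has the countable local base $\langle W_n^l\rangle_{n=1}^\infty$ in $T^l$), use Lemma \ref{3.9} to get $\overline p\in K(I_0)=\tilde{\iota}[K(\overline{e}_{I^\Diamond}^*)]$, and choose any $r\in K(\overline{e}_{I^\Diamond}^*)$ with $\tilde{\iota}(r)=\overline p$ (no idempotent needed). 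Then $\overline p\in(cl_{\beta S}B)^l$ gives $B^l\cap I^\Diamond\in r$ by Lemma \ref{3.11}, so $r\in K(\overline{e}_{I^\Diamond}^*)\cap cl_{\beta I^\Diamond}(B^l\cap I^\Diamond)$ and Lemma \ref{2.18} applied inside $I^\Diamond$ finishes.

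For (3)--(4), by Lemma \ref{3.3} it is enough to verify its hypothesis (1): for every $A\subseteq I^\Diamond$ central (resp.\ piecewise syndetic) near $\overline e$ and every $i$, the projection $\pi_i[A]\subseteq S$ is central (resp.\ piecewise syndetic) near $e$. The plan is to pick $r\in K(\overline{e}_{I^\Diamond}^*)$ with $A\in r$ (taking $r$ idempotent in the central case, and using Lemma \ref{2.18} in the piecewise syndetic case), write $\tilde{\iota}(r)=(p_1,\dots,p_l)$, and use Lemmas \ref{3.12} and \ref{3.9} to place $(p_1,\dots,p_l)\in K(I_0)=(K(e_S^*))^l\cap E_0$, so that each $p_i=\widetilde{\pi}_i(r)\in K(e_S^*)$ (idempotent in the central case). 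Since $\pi_i^{-1}(\pi_i[A])\supseteq A\in r$, I get $\pi_i[A]\in\widetilde{\pi}_i(r)=p_i$; hence $\pi_i[A]$ is central near $e$ in $S$ in case (3), and piecewise syndetic near $e$ in $S$ (through $p_i\in K(e_S^*)\cap cl_{\beta S}\pi_i[A]$ and Lemma \ref{2.18}) in case (4). Feeding this into Lemma \ref{3.3} and using that $B$ is $central^*$ (resp.\ $PS^*$) near $e$ yields the desired starred conclusion for $B^l\cap I^\Diamond$.

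For (5), I would again apply Lemma \ref{3.3}, and its hypothesis (1) is immediate, exactly as in Corollary \ref{3.4}: if $\langle x_n\rangle_{n=1}^\infty$ in $I^\Diamond$ has $FS(\langle x_n\rangle_{n=1}^\infty)\subseteq A$ with $\sum x_n$ converging near $\overline e$, then $FS(\langle\pi_i(x_n)\rangle_{n=1}^\infty)=\pi_i[FS(\langle x_n\rangle_{n=1}^\infty)]\subseteq\pi_i[A]$ and $\sum\pi_i(x_n)$ converges near $e$ by continuity of the projection, so $\pi_i[A]$ is $IP$ near $e$. The hard part, which is hidden inside (3)--(4), is ensuring the projected ultrafilter lands in $K(e_S^*)$ of the \emph{whole} semigroup $S$ rather than only in $K(e_{\pi_i[I^\Diamond]}^*)$: in Theorem \ref{3.6} this required assuming each $\pi_i[I^\Diamond]$ piecewise syndetic near $e$, whereas here it comes for free from the ideal-plus-diagonal structure through $K(I_0)=(K(e_S^*))^l\cap E_0$. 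The only other delicate point is the idempotent lift in (1), which must be carried out inside a minimal left ideal to guarantee $r\in K(\overline{e}_{I^\Diamond}^*)$.
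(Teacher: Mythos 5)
Your proposal is correct. For parts (1), (2), and (5) it coincides with the paper's own argument: the idempotent lift of $\overline{p}$ through $\tilde{\iota}$ carried out inside a minimal left ideal (your $L\cap\tilde{\iota}^{-1}(\{\overline{p}\})$ is literally the paper's subsemigroup $M$), the non-idempotent lift plus Lemma \ref{2.18} for piecewise syndeticity, and the projection-of-$FS$ argument of Corollary \ref{3.4} fed into Lemma \ref{3.3}. Where you genuinely diverge is in parts (3) and (4). The paper proves these by first showing that each $\pi_i[I^\Diamond]$ is piecewise syndetic near $e$ --- since $I^\Diamond$ is an ideal of $E^\Diamond$ and $\pi_i[E^\Diamond]=S$, the set $cl_{\beta S}(\pi_i[I^\Diamond])\cap e_S^*$ is a left ideal of $e_S^*$, hence meets $K(e_S^*)$, and Lemma \ref{2.18} applies --- and then invoking Theorem \ref{3.6}, whose internal engine is Lemma \ref{3.5} together with the inclusion $K\big(e_{\pi_i[I^\Diamond]}^*\big)\subseteq K(e_S^*)$ obtained from \cite[Theorem 1.65]{hs}. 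You instead verify statement (1) of Lemma \ref{3.3} directly: given $A\subseteq I^\Diamond$ central (resp.\ piecewise syndetic) near $\overline{e}$, you take $r\in K(\overline{e}_{I^\Diamond}^*)$ with $A\in r$, push it into $K(I_0)=(K(e_S^*))^l\cap E_0$ via Lemmas \ref{3.12} and \ref{3.9}, and read off $\pi_i[A]\in\widetilde{\pi}_i(r)=proj_i(\tilde{\iota}(r))\in K(e_S^*)$, which is an idempotent in the central case since $\tilde{\iota}$ is a homomorphism. This is sound: it bypasses Theorem \ref{3.6} and Lemma \ref{3.5} altogether, and it makes explicit that membership of the projected ultrafilter in $K(e_S^*)$ of the whole semigroup (rather than merely in $K\big(e_{\pi_i[I^\Diamond]}^*\big)$) flows from the identity $K(I_0)=(K(e_S^*))^l\cap E_0$, i.e.\ from the ideal-plus-diagonal structure, with no separate piecewise-syndeticity verification needed. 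What the paper's detour buys is Theorem \ref{3.6} itself, a statement of independent interest that applies to near-$\overline{e}$ subsemigroups $I^\Diamond$ that need not be ideals, provided their projections are piecewise syndetic near $e$; what your route buys is a shorter, self-contained deduction of (3)--(4) from the lemmas already in hand.
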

\begin{proof}
\begin{enumerate}
\item Pick $p\in K(e_S^*)$ such that $p+p=p$ and $B\in p$. By Lemma \ref{3.9}, $\overline{p}\in K(I_0)$. Pick by Lemma \ref{3.12}  $q\in K(\overline{e}_{I^\Diamond}^*)$ such that $\tilde{\iota}(q)=\overline{p}$. Pick a minimal left ideal $L$ of $\overline{e}_{I^\Diamond}^*$ such that $q\in L$. Let $M=\{r\in L: \tilde{\iota}(r)=\overline{p}\}$. Then $M$ is a compact subsemigroup of $\overline{e}_{I^\Diamond}^*$, so pick an idempotent $r\in M$. By Lemma \ref{3.11}, $B^l\cap I^\Diamond\in r$.
\item Pick by Lemma \ref{2.18} $p\in K(e_S^*)$ such that $B\in p$. Now Lemma \ref{3.9} provides that $\overline{p}\in K(I_0)$. Pick by Lemma \ref{3.12} $q\in K(\overline{e}_{I^\Diamond}^*)$ such that $\tilde{\iota}(q)=\overline{p}$. Hence, by Lemma \ref{3.11}, we have $B^l\cap I^\Diamond \in r$.
\end{enumerate}\

To establish statements (\ref{3.13(3)}) and (\ref{3.13(4)}), it suffices by Theorem \ref{3.6} to let $i\in\{1,2,\ldots,l\}$ and show that $\pi_i[I^\Diamond]\subseteq S$ is piecewise syndetic near $e$. Since $I^\Diamond$ is a near $\overline{e}$ ideal of $E^\Diamond$, so in particular $\pi_i[I^\Diamond]$ is a left ideal of $\pi_i[E^\Diamond]=S$ and therefore $cl_{\beta S}(\pi_i[I^\Diamond])$ is a left ideal of $\beta S$. Moreover, as $e\in cl_T(\pi_i[I^\Diamond])$, so $cl_{\beta S}(\pi_i[I^\Diamond])\cap e_S^*\neq\emptyset$. Hence $cl_{\beta S}(\pi_i[I^\Diamond])\cap e_S^*$ is a left ideal of $e_S^*$. Pick a minimal left ideal $L$ of $e_S^*$ contained in $cl_{\beta S}(\pi_i[I^\Diamond])\cap e_S^*$. Then $L\cap cl_{\beta S}(\pi_i[I^\Diamond])=L\neq\emptyset$ and therefore $K(e_S^*)\cap cl_{\beta S}(\pi_i[I^\Diamond])\neq\emptyset$. Thus, by Lemma \ref{2.18}, $\pi_i[I^\Diamond]\subseteq S$ is piecewise syndetic near $e$. Statement (\ref{3.13(5)}) follows immediately from Corollary \ref{3.4}.
\end{proof}
\begin{corollary}\label{Corollary 3.13}
Let $S\subseteq (0,\infty)$ be a near zero subsemigroup of $([0, \infty), +)$. Let $l\in \mathbb{N}$ and let $AP_l=\{(a, a+d, \ldots,a+(l-1)d):a, d\in S\}$. Let "large near zero" be any of  "central near zero", "piecewise syndetic near zero", "$central^*$ near zero", "$PS^*$ near zero" or "$IP^*$ near zero". Let $B\subseteq S$. If $B\subseteq S$ is large near zero, then $B^l\cap AP_l\subseteq AP_l$ is large near zero.
\end{corollary}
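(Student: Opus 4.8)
The plan is to fit $AP_l$ into the framework of Theorem~\ref{3.13} by producing an ambient near $\overline{0}$ subsemigroup $E^\Diamond$ of which $AP_l$ is a near $\overline{0}$ ideal, after which the five asserted implications become precisely parts (1)--(5) of that theorem. Here $e=0$ and $T=[0,\infty)$, and $0$ carries the countable local base $\langle [0,1/n)\rangle_{n=1}^\infty$, so the countable-base hypotheses needed in parts (2)--(4) hold automatically.

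First I would set $E^\Diamond=\{(a,a+d,\ldots,a+(l-1)d):a\in S,\ d\in S\cup\{0\}\}$. A routine check shows $E^\Diamond$ is a subsemigroup of $S^l$, since coordinatewise addition sends the pairs $(a,d)$ and $(a',d')$ to $(a+a',d+d')$, and $S$ is closed under addition while $S\cup\{0\}$ absorbs $d=0$. Taking $d=0$ gives $\{\overline{a}:a\in S\}\subseteq E^\Diamond$, and because $S$ is near $0$ the diagonal points $\overline{a}$ with small $a\in S$ witness $\overline{0}\in cl_{T^l}(E^\Diamond)$; thus $E^\Diamond$ is a near $\overline{0}$ subsemigroup of $T^l$ containing the diagonal. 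Next I would verify that $AP_l$, which is $E^\Diamond$ with the differences restricted to $d\in S$ (hence $d>0$), is an ideal of $E^\Diamond$: adding an element of $E^\Diamond$ with difference $\geq 0$ to one of $AP_l$ with difference $>0$ produces a tuple whose first term and (strictly positive) common difference both lie in $S$, hence an element of $AP_l$. Small arithmetic progressions approach $\overline{0}$, so $AP_l$ is a near $\overline{0}$ ideal of $E^\Diamond$, and $\pi_i[E^\Diamond]=S$ for each $i$ because the diagonal is present.

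The step I expect to require the most care is the closure hypothesis of Theorem~\ref{3.13}: for each $i\in\{1,\ldots,l\}$ and each $\mathcal{E}_i\subseteq\pi_i[AP_l]$ with $0\in cl_T(\mathcal{E}_i)$, one must produce elements of $\pi_i^{-1}(\mathcal{E}_i)$ arbitrarily close to $\overline{0}$. Given $\varepsilon>0$, I would pick $c\in\mathcal{E}_i$ with $c<\varepsilon/l$ and write $c=a+(i-1)d$ with $a,d\in S$. The argument then splits. When $i\geq 2$ both $a<c$ and $(i-1)d<c$, so $d<c/(i-1)$ and every coordinate $a+jd$ is at most $c\,l<\varepsilon$; the single progression $(a,a+d,\ldots,a+(l-1)d)$ already lies in $\pi_i^{-1}(\mathcal{E}_i)\cap[0,\varepsilon)^l$. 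When $i=1$ only $a=c$ is controlled while $d$ is free, but $\pi_1^{-1}(\mathcal{E}_1)$ contains $(a,a+d',\ldots,a+(l-1)d')$ for every $d'\in S$; choosing $d'\in S$ with $d'<\varepsilon/l$, which is possible since $S$ is near $0$, again lands inside the box $[0,\varepsilon)^l$. In either case $\overline{0}\in cl_{T^l}(\pi_i^{-1}(\mathcal{E}_i))$.

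With $E^\Diamond$ and $I^\Diamond=AP_l$ so chosen, all hypotheses of Theorem~\ref{3.13} are satisfied. Applying its parts (1)--(5) then yields that $B^l\cap AP_l$ is central, piecewise syndetic, $central^*$, $PS^*$, or $IP^*$ near $\overline{0}$ whenever $B$ is, respectively, central, piecewise syndetic, $central^*$, $PS^*$, or $IP^*$ near $0$, which is exactly the assertion of the corollary.
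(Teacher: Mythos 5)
Your proof is correct and takes essentially the same route as the paper: both set $I^\Diamond=AP_l$ and $E^\Diamond=AP_l\cup\{\overline{a}:a\in S\}$ (your parametrization with $d\in S\cup\{0\}$ describes exactly this set) and reduce the five implications to Theorem \ref{3.13}, the only substantive work being the closure hypothesis that $0\in cl_T(\mathcal{E}_i)$ forces $\overline{0}\in cl_{T^l}\big(\pi_i^{-1}(\mathcal{E}_i)\big)$. Your explicit case split at $i=1$ (where $c=a+(i-1)d$ places no constraint on $d$, so $d$ must be chosen small using that $S$ is near $0$) is in fact slightly more careful than the paper's uniform argument, which glosses over that case.
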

\begin{proof}
Let $T=([0, \infty), +), \,I^\Diamond=AP_l$ and let $E^\Diamond=I^\Diamond \cup \{\overline{a}:a\in S\}$. It suffices by Theorem \ref{3.13} to let $i\in\{1,2,\ldots,l\}$ and show that if $\mathcal{E}_i\subseteq\pi_i[AP_l]$ and if $0\in cl_{[0,\infty)}(\mathcal{E}_i)$, then $\overline{0}\in cl_{[0,\infty)^l}\big(\pi_i^{-1}(\mathcal{E}_i)\big)$. Note that $\pi_i[AP_l]=\{a+(i-1)d:a,d\in S\}$ and $\pi_i^{-1}(\mathcal{E}_i)=\{(a,a+d,\ldots,a+(l-1)d)\in AP_l:a,d\in S \text{ and } a+(i-1)d\in\mathcal{E}_i\}$. To show $\overline{0}\in cl_{[0,\infty)^l}\big(\pi_i^{-1}(\mathcal{E}_i)\big)$, it suffices to let $\epsilon>0$ and show that $[0,\epsilon)^l\cap\pi_i^{-1}(\mathcal{E}_i)\neq\emptyset$. Since $0\in cl_{[0,\infty)}(\mathcal{E}_i)$, so pick $a,d\in S$ such that $0<a+(i-1)d<\epsilon/l$. Then for each $j\in\{1,2,\ldots,l\}$, $0<a+(j-1)d<\epsilon$, and therefore $(a,a+d,\ldots,a+(l-1)d)\in [0,\epsilon)^l\cap\pi_i^{-1}(\mathcal{E}_i)$
\end{proof}
\begin{theorem}\label{Theorem 3.14}
Let $S\subseteq (0,\infty)$ be a near zero subsemigroup of $([0, \infty), +)$. Let $l\in \mathbb{N}$ and let $AP_l=\{(a, a+d, \ldots, a+(l-1)d):a, d\in S\}$. Let "large near zero" be any of  "central near zero", "piecewise syndetic near zero", "$central^*$ near zero", "$PS^*$ near zero" or "$IP^*$ near zero". Let $B\subseteq S$. If $B\subseteq S$ be large near zero, then  $\{(a, d)\in S\times S : \{a, a+d, \ldots, a+(l-1)d\}\subseteq B\}\subseteq S\times S$ is large near zero.
\end{theorem}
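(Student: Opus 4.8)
The plan is to recognize $S\times S$ as an isomorphic copy of the semigroup $AP_l$ and then transport the conclusion of Corollary \ref{Corollary 3.13} across this isomorphism, since the target set is precisely the image of $B^l\cap AP_l$. Assume $l\geq 2$; the case $l=1$ gives the set $B\times S$, which is handled directly by the product results of Section 2 together with the fact that $S$ is thick (hence large in every sense) near zero. I would first define $\phi:AP_l\to S\times S$ by $\phi\big((a,a+d,\ldots,a+(l-1)d)\big)=(a,d)$. Since the first coordinate recovers $a$ and the second recovers $a+d$ (hence $d$), the map $\phi$ is a well-defined bijection with inverse $(a,d)\mapsto(a,a+d,\ldots,a+(l-1)d)$. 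A one-line coordinatewise computation shows it is a semigroup isomorphism: the sum of the progressions with parameters $(a,d)$ and $(a',d')$ is the progression with parameters $(a+a',d+d')$, so $\phi$ intertwines the coordinatewise operations on $AP_l\subseteq S^l$ and on $S\times S$.

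Next I would pass to the Stone--\v Cech level. Both $AP_l$ (inside $[0,\infty)^l$ near $\overline 0$) and $S\times S$ (inside $[0,\infty)^2$ near $(0,0)$, noting $(0,0)\in cl(S\times S)$ since $S$ is near zero) are near-zero subsemigroups, so $\overline 0_{AP_l}^*$ and $(0,0)_{S\times S}^*$ are defined. Let $\tilde\phi:\beta(AP_l)\to\beta(S\times S)$ be the continuous extension of $\phi$; by \cite[Lemma 2.14]{hs} it is a homomorphism, and being the extension of a bijection it is an isomorphism (its inverse is the extension of $\phi^{-1}$). The crucial step, and the only genuinely technical one, is to verify that $\tilde\phi\big[\overline 0_{AP_l}^*\big]=(0,0)_{S\times S}^*$. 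For this I would compare the basic neighborhood traces: for the progression with parameters $(a,d)$, its largest coordinate is $a+(l-1)d$, so $a+(l-1)d<\epsilon$ forces $a<\epsilon$ and $d<\epsilon$, while $a<\delta$ and $d<\delta$ force $a+(l-1)d<l\delta$. Hence the filter on $AP_l$ generated by $\{[0,\epsilon)^l\cap AP_l:\epsilon>0\}$ corresponds under $\phi$ to the filter on $S\times S$ generated by $\{\big([0,\delta)\times[0,\delta)\big)\cap(S\times S):\delta>0\}$, and therefore $p\in\overline 0_{AP_l}^*$ if and only if $\tilde\phi(p)\in(0,0)_{S\times S}^*$. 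Thus $\tilde\phi$ restricts to an isomorphism of these compact right topological semigroups.

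Granting this, every largeness notion in the list is preserved, because each is defined purely in terms of data that an isomorphism respects: $\tilde\phi$ carries $K\big(\overline 0_{AP_l}^*\big)$ onto $K\big((0,0)_{S\times S}^*\big)$, sends idempotents to idempotents, and satisfies $A\in p\iff\phi[A]\in\tilde\phi(p)$ for $A\subseteq AP_l$. Consequently $A$ is central near $\overline 0$ iff $\phi[A]$ is central near $(0,0)$, and likewise for piecewise syndetic near zero; the same holds for $IP$ near zero via its algebraic characterization in Theorem \ref{Theorem 2.8} (applicable since $\overline 0$ and $(0,0)$ have countable local bases). The dual notions $central^*$, $PS^*$, and $IP^*$ near zero are then preserved automatically, since $\phi$ is a bijection and its image meets every central (resp.\ piecewise syndetic, resp.\ $IP$) set near $(0,0)$ exactly when the preimage meets every such set near $\overline 0$.

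Finally I would observe the key set identity $\phi\big[B^l\cap AP_l\big]=\{(a,d)\in S\times S:\{a,a+d,\ldots,a+(l-1)d\}\subseteq B\}$, which is exactly the set in the statement. By Corollary \ref{Corollary 3.13}, if $B$ is large near zero in $S$ then $B^l\cap AP_l$ is large near zero in $AP_l$; transporting this along $\tilde\phi$ yields that the target set is large near zero in $S\times S$. Everything outside the neighborhood comparison establishing $\tilde\phi\big[\overline 0_{AP_l}^*\big]=(0,0)_{S\times S}^*$ is formal transport of structure along an isomorphism, so that comparison is where I expect the only real work to lie.
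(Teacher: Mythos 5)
Your proposal is correct and is essentially the paper's own argument: the paper defines $\psi(a,d)=(a,a+d,\ldots,a+(l-1)d)$, asserts that it is an isomorphism of $(S\times S)\cup\{(0,0)\}$ onto $AP_l\cup\{\overline{0}\}$, and pulls Corollary \ref{Corollary 3.13} back along $\psi$ --- exactly your transport argument with $\phi=\psi^{-1}$. Your write-up simply makes explicit what the paper compresses into the single word ``isomorphism'' (the neighborhood comparison giving $\tilde{\phi}\big[\overline{0}_{AP_l}^*\big]=(0,0)_{S\times S}^*$, and the resulting preservation of each largeness notion and its dual), and in addition you patch the degenerate case $l=1$, where the paper's map $\psi$ fails to be injective and its one-line proof does not literally apply.
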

\begin{proof}
Let $AP_l=\{(a, a+d, a+2d, \ldots, a+(l-1)d): a, d\in S\}$. Then the function $\psi:(S\times S)\cup\{(0,0)\}\rightarrow AP_l\cup\{\overline{0}\}$ defined by $\psi(0,0)=\overline{0}$ and $\psi(a, d)=(a, a+d, \ldots, a+(l-1)d)$, $a,d\in S$ is an isomorphism. If $B\subseteq S$ is large near zero, then by Corollary \ref{Corollary 3.13}, $B^l\cap AP_l\subseteq AP_l$ is large near zero, hence $\psi^{-1}(B^l\cap AP_l)\subseteq \psi^{-1}(AP_l)$is large near zero, i.e. $\{(a,d)\in S\times S:\{a,a+d,\ldots,a+(l-1)d\}\subseteq B\}\subseteq S\times S$ is large near zero. 
\end{proof}
Now we recall the notion of minimal idempotent \cite[Definition 1.37]{hs} which will be used subsequently. 
\begin{definition}
Let $(\mathcal{S},+)$ be a semigroup and let $e,f\in E(\mathcal{S})$. Then $e\leq f$ if $e=e+f=f+e$.
\end{definition}
\begin{definition}
Let $\mathcal{S}$ be a semigroup. Let $e\in E(\mathcal{S})$. Then $e$ is a minimal idempotent if $f\in E(\mathcal{S})$ and $f\leq e$ implies $f=e$.
\end{definition}
\begin{lemma}\label{3.16}
Let $u, v\in \mathbb{N}$ and let $A$ be an $u\times v$ matrix with entries from $\mathbb{N}\cup \{0\}$ such that no row of $A$ is zero. Let $S\subseteq (0,\infty)$ be a dense subsemigroup of $([0, \infty), +)$. Let $\widetilde{\phi}:\beta(S^v)\rightarrow (\beta S)^u$ be the continuous extension of the map $\phi:S^v\rightarrow S^u\subseteq (\beta S)^u$ defined by $\phi(\vec{x})=A\vec{x}$, $\vec{x}\in S^v$. Let $p$ be a minimal idempotent of $0_S^*$ and assume that for every $C\in p$ there exists $\vec{x}\in S^v$ such that $A\vec{x}\in C^u$. Then there is a minimal idempotent $q\in \overline{0}_{S^v}^*$ such that $\widetilde{\phi}(q)=\overline{p}$.
\end{lemma}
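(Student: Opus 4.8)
The plan is to exhibit $\overline{p}$ as the image under $\widetilde{\phi}$ of a minimal idempotent of $\overline{0}_{S^v}^*$, by first checking that $\overline{p}$ lies in $\widetilde{\phi}\big[\overline{0}_{S^v}^*\big]$ and then transporting minimality through the smallest-ideal machinery already used in Theorem \ref{3.13}(1).

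First I would record the structural facts. Since the entries of $A$ are nonnegative integers and no row is zero, each coordinate map $\phi_i(\vec{x})=\sum_{j=1}^v a_{ij}x_j$ sends $S^v$ into $S$, so $\phi$ is a homomorphism of $(S^v,+)$ into $S^u\subseteq(\beta S)^u$. As $S^u$ lies in the topological centre of the compact right topological semigroup $(\beta S)^u$, \cite[Corollary 4.22]{hs} makes $\widetilde{\phi}$ a homomorphism, and $proj_i\circ\widetilde{\phi}=\widetilde{\phi_i}$ since both agree on the dense set $S^v$. Next I would verify $\widetilde{\phi}\big[\overline{0}_{S^v}^*\big]\subseteq(0_S^*)^u$: writing $M=\max_{i,j}a_{ij}$, if $\vec{x}\in[0,\delta)^v\cap S^v$ then $\phi_i(\vec{x})<vM\delta$, so with $\delta=\epsilon/(vM)$ we get $[0,\delta)^v\cap S^v\subseteq\phi_i^{-1}([0,\epsilon)\cap S)$. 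Since every $r\in\overline{0}_{S^v}^*$ contains each $[0,\delta)^v\cap S^v$, it follows that $[0,\epsilon)\cap S\in\widetilde{\phi_i}(r)$ for all $\epsilon>0$, whence $\widetilde{\phi_i}(r)\in 0_S^*$. Thus $h:=\widetilde{\phi}|_{\overline{0}_{S^v}^*}$ is a continuous homomorphism into $(0_S^*)^u$.

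The heart of the argument is to produce $r\in\overline{0}_{S^v}^*$ with $\widetilde{\phi}(r)=\overline{p}$, which reduces to the finite intersection property of the family $\{\phi^{-1}(C^u):C\in p\}\cup\{[0,\epsilon)^v\cap S^v:\epsilon>0\}$. Intersecting finitely many members, it suffices to show that for every $C\in p$ and $\epsilon>0$ there is $\vec{x}\in[0,\epsilon)^v\cap S^v$ with $A\vec{x}\in C^u$. Since $p\in 0_S^*$, the set $C'=C\cap([0,\epsilon)\cap S)$ lies in $p$, so by hypothesis there is $\vec{x}'\in S^v$ with $A\vec{x}'\in(C')^u\subseteq[0,\epsilon)^u$. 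Because the entries are nonnegative integers with no zero row, for each nonzero column $j$ some $a_{ij}\geq1$ forces $x_j'\leq(A\vec{x}')_i<\epsilon$; a variable sitting in a zero column does not affect $A\vec{x}'$ and may be freely replaced by an element of $S\cap(0,\epsilon)$ (available since $S$ is dense). This yields the required small $\vec{x}$, and any ultrafilter $r$ extending the family lies in $\overline{0}_{S^v}^*$ with $\widetilde{\phi}(r)=\overline{p}$.

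It remains to upgrade to a minimal idempotent. Since $p\in K(0_S^*)$, \cite[Theorem 2.23]{hs} gives $\overline{p}\in(K(0_S^*))^u=K\big((0_S^*)^u\big)$. Put $\mathcal{T}'=h\big[\overline{0}_{S^v}^*\big]$, a closed subsemigroup of $(0_S^*)^u$ containing $\overline{p}$; as $\overline{p}\in\mathcal{T}'\cap K\big((0_S^*)^u\big)$, \cite[Theorem 1.65]{hs} gives $K(\mathcal{T}')=\mathcal{T}'\cap K\big((0_S^*)^u\big)\ni\overline{p}$, and since $h$ maps $\overline{0}_{S^v}^*$ onto $\mathcal{T}'$, \cite[Exercise 1.7.3]{hs} yields $\overline{p}\in h\big[K(\overline{0}_{S^v}^*)\big]$. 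Finally, exactly as in Theorem \ref{3.13}(1), I would pick $q_0\in K(\overline{0}_{S^v}^*)$ with $\widetilde{\phi}(q_0)=\overline{p}$, choose a (closed) minimal left ideal $L$ of $\overline{0}_{S^v}^*$ with $q_0\in L$, and note that $M=L\cap\widetilde{\phi}^{-1}(\{\overline{p}\})$ is a nonempty compact subsemigroup (using that $\overline{p}$ is idempotent and $L$ a left ideal); any idempotent $q\in M$ then lies in $K(\overline{0}_{S^v}^*)$, is therefore a minimal idempotent, and satisfies $\widetilde{\phi}(q)=\overline{p}$. The main obstacle is the small-solution step in the third paragraph: the hypothesis only supplies \emph{some} solution $\vec{x}'$ of $A\vec{x}\in C^u$, and one must convert it into a solution lying near $\overline{0}$; this is exactly where the nonnegativity of the entries, the no-zero-row condition, and the density of $S$ are all needed.
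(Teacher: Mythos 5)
Your proof is correct, and its first half is essentially the paper's own argument: the paper also realizes $\overline{p}$ as $\widetilde{\phi}(r)$ for an ultrafilter $r\in\overline{0}_{S^v}^*$ built from the solution set $\mathcal{X}=\{\vec{x}\in S^v:A\vec{x}\in C^u\}$ together with all cubes $(S\cap(0,\epsilon))^v$, merely packaging this as a contradiction (if $\overline{p}$ missed the closed set $\widetilde{\phi}\big[\overline{0}_{S^v}^*\big]$, a basic neighborhood $(cl_{\beta S}C)^u$ would separate them, while $\widetilde{\phi}(r)$ lands inside it). What the paper asserts without comment is precisely the finite intersection property, i.e.\ your small-solution step; your justification (shrink $C$ to $C\cap(0,\epsilon)\cap S\in p$, bound every coordinate sitting in a nonzero column by a row value of $A\vec{x}'$, and overwrite zero-column coordinates by small elements of $S$) is exactly what is needed there, and your explicit check that $\widetilde{\phi}\big[\overline{0}_{S^v}^*\big]\subseteq(0_S^*)^u$ is likewise used silently by the paper when it speaks of $\overline{p}$ being minimal ``in $(0_S^*)^u$''. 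Where you genuinely diverge is the final upgrade to minimality. The paper takes the whole fiber $M=\{q\in\overline{0}_{S^v}^*:\widetilde{\phi}(q)=\overline{p}\}$, which is a compact subsemigroup because $\overline{p}$ is idempotent, picks an idempotent $w\in M$, and then applies \cite[Theorem 1.60]{hs} to get a minimal idempotent $q\leq w$ of $\overline{0}_{S^v}^*$; since homomorphisms preserve the order $\leq$, $\widetilde{\phi}(q)\leq\widetilde{\phi}(w)=\overline{p}$, and minimality of $\overline{p}$ forces $\widetilde{\phi}(q)=\overline{p}$. You instead transport smallest ideals: $\overline{p}\in K\big((0_S^*)^u\big)$ by \cite[Theorem 2.23]{hs}, then \cite[Theorem 1.65]{hs} and \cite[Exercise 1.7.3]{hs} place $\overline{p}$ in $\widetilde{\phi}\big[K(\overline{0}_{S^v}^*)\big]$, and an idempotent of $L\cap\widetilde{\phi}^{-1}(\{\overline{p}\})$ for a minimal left ideal $L$ finishes, exactly as in Theorem \ref{3.13}(1). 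Both finishes are sound: the paper's is shorter, resting only on the order-theoretic fact that every idempotent lies above a minimal one, whereas yours avoids the partial order entirely and reuses the $K$-transport machinery already set up in Section 3, at the mild cost of invoking two additional structure theorems.
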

\begin{proof}
First, we claim that $\overline{p}\in \widetilde{\phi}\big[\overline{0}_{S^v}^*\big]$. Suppose that $\overline{p}\in (\beta S)^u \setminus \widetilde{\phi}\big[\overline{0}_{S^v}^*\big]$. Since $\widetilde{\phi}\big[0_{S^v}^*\big]$ is closed, so pick a neighborhood $U$ of $\overline{p}$ such that $U\cap \widetilde{\phi}\big[\overline{0}_{S^v}^*\big]= \emptyset$. Pick $C\in p$ such that $(cl_{\beta S}C)^u\subseteq U$. Let $\mathcal{X}=\{\vec{x}\in S^v:A\vec{x}\in C^u\}$. Then $\{(S\cap (0, \epsilon))^v:\epsilon>0\}\cup \{\mathcal{X}\}$ has finite intersection property, and so pick $r\in \overline{0}_{S^v}^*$ such that
$\{(S\cup(0, \epsilon))^v:\epsilon>0\}\cup\{\mathcal{X}\}\subseteq r$. Then $\widetilde{\phi}(r) \in\widetilde{\phi}[0_{S^v}^*]$. Moreover, $\phi[\mathcal{X}]\in\widetilde{\phi}(r)$ and $\phi[\mathcal{X}]\subseteq C^u$ implies $C^u\in\widetilde{\phi}(r)$, i.e. $\widetilde{\phi}(r)\in (cl_{\beta S}C)^u$ and thus $\widetilde{\phi}(r)\in U\cap\widetilde{\phi}\big[\overline{0}_{S^v}^*\big]$, a contradiction. 

Let $M=\{q\in \overline{0}_{S^v}^*:\widetilde{\phi}(q)=\overline{p}\}$. Then $M$ is a compact subsemigroup of $0_{S^v}^*$, so pick an idempotent $w\in M$. Pick by \cite[Theorem 1.60]{hs} a minimal idempotent $q\in \overline{0}_{S^v}^*$ such that $q\leq w$. Since $\widetilde{\phi}$
is a homomorphism, we have $\widetilde{\phi}(q)\leq \widetilde{\phi}(w)=\overline{p}$ so, since $\overline{p}$ is  minimal in $(0_S^*)^u$, we have that $\widetilde{\phi}(q)=\overline{p}$.
\end{proof}
\begin{theorem}\label{3.17}
Let $u, v\in \mathbb{N}$ and let $A$ be an $u\times v$ matrix with entries from $\mathbb{N}\cup \{0\}$ such that no row of $A$ is zero. Let $S\subseteq (0,\infty)$ be a dense subsemigroup of $([0, \infty), +)$. Assume that for every central set near zero $C$ in $S$, there exists $\vec{x}\in S^v$ such that $A\vec{x}\in C^u$. Then for every central set near zero $C$ in $S$, $\{\vec{x}\in S^v: A\vec{x}\in C^u\}$ is central near zero in $S^v$.
\end{theorem}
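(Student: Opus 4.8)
The plan is to manufacture a minimal idempotent of $\overline{0}_{S^v}^*$ whose image under $\widetilde{\phi}$ is $\overline{p}$ for a suitable idempotent $p$ witnessing the centrality of $C$, and then to read off that $\{\vec{x}\in S^v:A\vec{x}\in C^u\}$ is a member of that idempotent. Concretely, since $C$ is central near zero in $S$, first I would pick an idempotent $p\in K(0_S^*)$ with $C\in p$; because the idempotents of the smallest ideal are precisely the minimal idempotents, this $p$ is a minimal idempotent of $0_S^*$, which is exactly the form required to feed into Lemma \ref{3.16}.

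Next I would verify the hypothesis of Lemma \ref{3.16} for this $p$. If $C'\in p$, then $C'$ is itself central near zero in $S$, witnessed by the same idempotent $p\in K(0_S^*)$; hence by the standing assumption of the theorem there is $\vec{x}\in S^v$ with $A\vec{x}\in (C')^u$. Thus every member of $p$ satisfies the required solvability condition, and Lemma \ref{3.16} produces a minimal idempotent $q\in\overline{0}_{S^v}^*$ with $\widetilde{\phi}(q)=\overline{p}$. Being a minimal idempotent of $\overline{0}_{S^v}^*$, $q$ lies in $K(\overline{0}_{S^v}^*)$.

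It remains to show $\{\vec{x}\in S^v:A\vec{x}\in C^u\}\in q$, which is the crux of the argument. For each $i\in\{1,2,\ldots,u\}$ let $proj_i:(\beta S)^u\to\beta S$ be the $i$-th projection and set $\phi_i=proj_i\circ\phi:S^v\to S$, so that $\phi_i(\vec{x})$ is the $i$-th coordinate of $A\vec{x}$. Since $proj_i$ is continuous, $proj_i\circ\widetilde{\phi}$ is the continuous extension $\widetilde{\phi_i}$ of $\phi_i$. From $\widetilde{\phi}(q)=\overline{p}$ we obtain $\widetilde{\phi_i}(q)=proj_i(\overline{p})=p$, and since $C\in p$ the membership criterion for continuous extensions (\cite[Lemma 3.30]{hs}) gives $\phi_i^{-1}[C]\in q$ for each $i$. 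Intersecting over $i$,
\[
\{\vec{x}\in S^v:A\vec{x}\in C^u\}=\bigcap_{i=1}^{u}\phi_i^{-1}[C]\in q .
\]
Since $q\in K(\overline{0}_{S^v}^*)$ is an idempotent, this exhibits $\{\vec{x}\in S^v:A\vec{x}\in C^u\}$ as central near zero in $S^v$, completing the argument.

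The main obstacle I anticipate is the bookkeeping in the last paragraph: one must keep track of the fact that $\widetilde{\phi}$ lands in the product $(\beta S)^u$ rather than in $\beta(S^u)$, and therefore convert the statement $\widetilde{\phi}(q)=\overline{p}\in(cl_{\beta S}C)^u$ into the coordinatewise assertions $\phi_i^{-1}[C]\in q$ through the projections before intersecting. Everything else—choosing the minimal idempotent $p$, checking that every member of $p$ satisfies the Lemma \ref{3.16} hypothesis, and invoking minimality to land in $K(\overline{0}_{S^v}^*)$—is routine once the definitions are unwound.
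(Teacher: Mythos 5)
Your proposal is correct and follows essentially the same route as the paper: pick an idempotent $p\in K(0_S^*)$ containing $C$, note it is minimal, verify the solvability hypothesis of Lemma \ref{3.16} (every member of $p$ is central near zero), obtain a minimal idempotent $q\in\overline{0}_{S^v}^*$ with $\widetilde{\phi}(q)=\overline{p}$, and conclude that $\{\vec{x}\in S^v:A\vec{x}\in C^u\}\in q$. The only cosmetic difference is the last step, where the paper argues via continuity of $\widetilde{\phi}$ and the basic neighborhood $\bigtimes_{i=1}^u cl_{\beta S}C$ of $\overline{p}$, while you argue coordinatewise through the projections and \cite[Lemma 3.30]{hs}; the two deductions are equivalent.
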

\begin{proof}
Let $C\subseteq S$ be a central set near zero. Pick an idempotent $p\in K(0_S^*)$ such that $C\in p$. By \cite[Theorem 1.59]{hs}, $p$ is a minimal idempotent in $0_S^*$. Let $\widetilde{\phi}:\beta(S^v)\rightarrow (\beta S)^u$ be the continuous extension of the map $\phi:S^v\rightarrow S^u\subseteq (\beta S)^u$ defined by $\phi(\vec{x})=A\vec{x}$, $\vec{x}\in S^v$. Pick by Lemma \ref{3.16} a minimal idempotent $q\in\overline{0}_{S^v}^*$ such that $\widetilde{\phi}(q)=\overline{p}$. By \cite[Theorem 1.59]{hs}, $q$ is an idempotent in $K\big(\overline{0}_{S^v}^*\big)$. Since $\bigtimes_{i=1}^u cl_{\beta S} C$ is a neighborhood of $\overline{p}$, so pick $B\in q$ such that $\widetilde{\phi}\big[cl_{\beta(S^v)}B\big]\subseteq\bigtimes_{i=1}^u cl_{\beta S} C$. Then $B\subseteq \{\vec{x}\in S^v: A\vec{x}\in C^u\}$, so  $\{\vec{x}\in S^v:A\vec{x}\in C^u\}\in q$ as required. 
\end{proof}
\begin{theorem}\label{3.19}
Let $u, v\in \mathbb{N}$ and $A$ be an $u\times v$ matrix with entries from $\mathbb{N}\cup\{0\}$ such that none of the rows of $A$ is zero. Let $S\subseteq (0,\infty)$ be a dense subsemigroup of $([0, \infty), +)$. If
\begin{enumerate}
    \item for each central set near zero $C$ in $S$, there exists $\vec{x}\in S^v$ such that $A\vec{x}\in C^u$ and
    \item\label{3.19(2)} for each $\epsilon>0$ and for each $s\in S\cap(0,\epsilon)$, there exists $\vec{x}\in (S\cap (0, \epsilon))^v$ such that $A\vec{x}=\overline{s}$,
\end{enumerate}
then for every piecewise syndetic set near zero $B$ in $S, \, \{\vec{x}\in S^v: A\vec{x}\in B^u\}\subseteq S^v$ is piecewise syndetic near zero.
\end{theorem}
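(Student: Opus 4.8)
The plan is to characterize piecewise syndeticity near zero through the algebraic description of Lemma \ref{2.18} and to realize the target set as a member of an ultrafilter lying in the smallest ideal of $\overline{0}_{S^v}^*$. Write $D=\{\vec{x}\in S^v:A\vec{x}\in B^u\}$ and let $\phi:S^v\to S^u$, $\phi(\vec{x})=A\vec{x}$, with continuous extension $\widetilde{\phi}:\beta(S^v)\to(\beta S)^u$. Since $\phi$ is additive, $\widetilde{\phi}$ is a homomorphism, and because each row is continuous and sends $\overline{0}$ to $0$, one checks that $\widetilde{\phi}$ carries $\overline{0}_{S^v}^*$ into $(0_S^*)^u$. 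As $[0,\infty)^v$ is metrizable and $S^v$ is dense in it, $S^v$ is a near $\overline{0}$ subsemigroup of $([0,\infty)^v,+)$ and $\overline{0}$ has a countable local base, so by Lemma \ref{2.18} it suffices to produce $r\in K(\overline{0}_{S^v}^*)\cap cl_{\beta(S^v)}D$.

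First I would apply Lemma \ref{2.18} to $B$ and pick $p\in K(0_S^*)\cap cl_{\beta S}B$. The crux, and the main obstacle, is that $p$ need not be idempotent, so Lemma \ref{3.16} cannot be applied to $p$ directly. To bypass this, let $L$ and $R$ be the minimal left and right ideals of $0_S^*$ containing $p$, so that $p$ lies in the group $L\cap R$ with identity $e'$; then $e'$ is an idempotent of $K(0_S^*)$, hence a minimal idempotent, and $e'+p=p$. Every member of $e'$ is central near zero, so hypothesis $(1)$ provides for each $C\in e'$ some $\vec{x}\in S^v$ with $A\vec{x}\in C^u$, which is exactly the hypothesis of Lemma \ref{3.16} for $e'$. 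That lemma then yields a minimal idempotent $q'\in\overline{0}_{S^v}^*$, in particular $q'\in K(\overline{0}_{S^v}^*)$, with $\widetilde{\phi}(q')=\overline{e'}$.

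Next I would use hypothesis $(2)$ to construct a near-zero preimage of $\overline{p}$. Consider the family $\{\{\vec{x}\in S^v:A\vec{x}\in C^u\}:C\in p\}\cup\{(S\cap(0,\epsilon))^v:\epsilon>0\}$. Given $C\in p$ and $\epsilon>0$, the fact that $p\in 0_S^*$ gives $0\in cl_{[0,\infty)}C$, so one may pick $s\in C\cap(0,\epsilon)$; hypothesis $(2)$ then supplies $\vec{x}\in(S\cap(0,\epsilon))^v$ with $A\vec{x}=\overline{s}\in C^u$, witnessing the finite intersection property. Extending to an ultrafilter $r_0\in\beta(S^v)$, the sets $(S\cap(0,\epsilon))^v$ force $r_0\in\overline{0}_{S^v}^*$; and since each $\{\vec{x}:A\vec{x}\in C^u\}\in r_0$ is contained in $\{\vec{x}:(A\vec{x})_i\in C\}$, we get $C\in\widetilde{\phi}(r_0)_i$ for all $i$ and all $C\in p$, so $\widetilde{\phi}(r_0)_i=p$ for every $i$, i.e. $\widetilde{\phi}(r_0)=\overline{p}$.

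Finally I would set $r=q'+r_0$. Since $q'\in K(\overline{0}_{S^v}^*)$ and $K(\overline{0}_{S^v}^*)$ is an ideal, $r\in K(\overline{0}_{S^v}^*)$; and as $\widetilde{\phi}$ is a homomorphism and the operation on $(\beta S)^u$ is coordinatewise, $\widetilde{\phi}(r)=\widetilde{\phi}(q')+\widetilde{\phi}(r_0)=\overline{e'}+\overline{p}=\overline{p}$, using $e'+p=p$. Because $p\in cl_{\beta S}B$, we have $\widetilde{\phi}(r)\in(cl_{\beta S}B)^u$, so $B\in\widetilde{\phi}(r)_i$, that is $\{\vec{x}:(A\vec{x})_i\in B\}\in r$ for each $i$; intersecting over $i\in\{1,\ldots,u\}$ yields $D\in r$. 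Thus $r\in K(\overline{0}_{S^v}^*)\cap cl_{\beta(S^v)}D$, and Lemma \ref{2.18} gives that $D$ is piecewise syndetic near zero. The delicate points I anticipate are the verification that $\widetilde{\phi}$ maps $\overline{0}_{S^v}^*$ into $(0_S^*)^u$ and the group-identity reduction replacing the non-idempotent $p$ by the minimal idempotent $e'$; hypothesis $(2)$ serves precisely to transport $\overline{e'}$ back to $\overline{p}$ while staying inside the smallest ideal.
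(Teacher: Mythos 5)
Your proposal is correct, and it reaches the conclusion by a genuinely different route than the paper. The paper's proof leans on an external result, \cite[Theorem 2.5]{bcp}, which characterizes piecewise syndetic sets near zero directly in terms of minimal idempotents: it yields an idempotent $p\in E(K(0_S^*))$ containing arbitrarily small shifts $-s+B$, so Lemma \ref{3.16} applies to $p$ at once, and hypothesis (\ref{3.19(2)}) is then used to turn the scalar shift $s$ into a vector shift $\vec{x}$ with $A\vec{x}=\overline{s}$, giving $\phi^{-1}[(-s+B)^u]\subseteq -\vec{x}+\phi^{-1}[B^u]\in q$; the same bcp characterization (applied in $S^v$) finishes the proof. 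You instead stay entirely inside the paper's own machinery: you take the witness $p\in K(0_S^*)\cap cl_{\beta S}B$ from Lemma \ref{2.18}, which need not be idempotent, and repair this by passing to the identity $e'$ of the group $L\cap R$ (minimal left ideal meets minimal right ideal) containing $p$ -- a correct use of the structure theorem, since $e'\in E(K(0_S^*))$ is minimal and $e'+p=p$, and every member of $e'$ is central near zero so hypothesis (1) feeds Lemma \ref{3.16} applied to $e'$. Your use of hypothesis (\ref{3.19(2)}) is also different in kind: rather than shifting, you use it (together with $p\in 0_S^*$) in a finite-intersection-property argument to manufacture an ultrafilter $r_0\in\overline{0}_{S^v}^*$ with $\widetilde{\phi}(r_0)=\overline{p}$, and then $r=q'+r_0$ lands in $K(\overline{0}_{S^v}^*)$ with $\widetilde{\phi}(r)=\overline{e'+p}=\overline{p}$, so $D\in r$ and Lemma \ref{2.18} concludes. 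What the paper's route buys is brevity, at the cost of importing a characterization proved elsewhere for dense subsemigroups of $((0,\infty),+)$; what your route buys is self-containment (only Lemma \ref{2.18}, Lemma \ref{3.16}, and standard facts from \cite{hs} about the smallest ideal and continuous extensions of homomorphisms), and it works verbatim in the paper's general near-idempotent framework, including the application of Lemma \ref{2.18} in $S^v$ where $\overline{0}$ has a countable local base. All the steps you flag as delicate check out: $\widetilde{\phi}$ is a homomorphism by \cite[Corollary 4.22]{hs}, it carries $\overline{0}_{S^v}^*$ into $(0_S^*)^u$ because no row of $A$ is zero, and the group-identity reduction is exactly what is needed to make Lemma \ref{3.16} applicable.
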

\begin{proof}
Let $B\subseteq S$ be piecewise syndetic near zero. Pick by \cite[Theorem 2.5]{bcp} $p\in E(K(0_S^*))$ such that for each $\epsilon>0$ there exists $s\in S\cap (0,\epsilon)$ for which $-s+B\in p$. Let $\widetilde{\phi}:\beta(S^v)\rightarrow (\beta S)^u$ be the continuous extension of the map $\phi:S^v\rightarrow S^u\subseteq (\beta S)^u$ defined by $\phi(\vec{x})=A\vec{x}$, $\vec{x}\in S^v$. Since $p$ is a minimal idempotent in $0_S^*$ \cite[Theorem 1.59]{hs}, pick by Lemma \ref{3.16} $q\in E\big(K\big(\overline{0}_{S^v}^*\big)\big)$ such that $\widetilde{\phi}(q)=\overline{p}$. To show that $\{\vec{x}\in S^v:A\vec{x}\in B^u\}=\phi^{-1}[B^u]\subseteq S^v$ is piecewise syndetic near zero, it suffices by \cite[Theorem 2.5]{bcp} to let $\epsilon>0$ and find $\vec{x}\in(S\cap (0,\epsilon))^v$ such that $-\vec{x}+\phi^{-1}[B^u]\in q$. Pick $s\in S\cap (0,\epsilon)$ such that $-s+B\in p$. Then $(-s+B)^u\in\overline{p}=\widetilde{\phi}(q)$, i.e. $\phi^{-1}[(-s+B)^u]\in q$. Pick by Assumption (\ref{3.19(2)}) $\vec{x}\in (S\cap (0,\epsilon))^v$ such that $A\vec{x}=\overline{s}$. Then
\begin{align*}
\phi^{-1}[(-s+B)^u]&\subseteq \phi^{-1}[-\overline{s}+B^u]\\
&\subseteq \{\vec{y}\in S^v:\overline{s}+A\vec{y}\in B^u\}\\
&=\{\vec{y}\in S^v:\phi(\vec{x}+\vec{y})\in B^u\}\\
&\subseteq\{\vec{y}\in S^v:\vec{x}+\vec{y}\in\phi^{-1}[B^u]\}=-\vec{x}+\phi^{-1}[B^u],
\end{align*}
and therefore $-\vec{x}+\phi^{-1}[B^u]\in q$.
\end{proof}
\section{Tensor product near zero}\label{4}
Let us begin by recalling the notion of tensor product \cite[Definition 1.1]{hs1}.

\begin{definition}
Let $X$ and $Y$ be discrete spaces, let $p\in \beta X$, and $q\in\beta Y$. Then the tensor product of $p$ and $q$ is $p\otimes q=\{A\subseteq X\times Y:\{x\in X:\{y\in Y:(x,y)\in A\}\in q\}\in p\}$. 
\end{definition}

It is easy to see that $A\times B\in p\otimes q$ if and only if $A\in p$ and $B\in q$.

\begin{theorem}
Let $S_1$ and $S_2$ be two discrete subsemigroups, and let $\tilde{\iota}:\beta (S_1\times S_2)\rightarrow \beta S_1\times \beta S_2$ be the continuous extension of the inclusion map $\iota:(S_1\times S_2)_d\rightarrow \beta S_1\times \beta S_2$. Let $p_1\in\beta S_1$ and $p_2\in\beta S_2$. Then $\tilde{\iota}(p_1\otimes p_2)=(p_1, p_2)$.
\end{theorem}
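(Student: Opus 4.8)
The plan is to establish the identity one coordinate at a time, using the fact that a point of the product space $\beta S_1\times\beta S_2$ is completely determined by its two coordinates. First I would introduce the coordinate projections $\pi_1:S_1\times S_2\rightarrow S_1$ and $\pi_2:S_1\times S_2\rightarrow S_2$, their continuous extensions $\widetilde{\pi}_1,\widetilde{\pi}_2:\beta(S_1\times S_2)\rightarrow\beta S_i$, and the topological projections $proj_1,proj_2:\beta S_1\times\beta S_2\rightarrow\beta S_i$. The key structural observation is that $proj_i\circ\tilde{\iota}=\widetilde{\pi}_i$ for each $i\in\{1,2\}$: both are continuous maps from $\beta(S_1\times S_2)$ into the compact Hausdorff space $\beta S_i$, and on a point $(s_1,s_2)\in S_1\times S_2$ we have $proj_i\big(\tilde{\iota}(s_1,s_2)\big)=proj_i(s_1,s_2)=s_i=\pi_i(s_1,s_2)=\widetilde{\pi}_i(s_1,s_2)$, so they agree on the dense subset $S_1\times S_2$ and hence everywhere. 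Consequently, to prove $\tilde{\iota}(p_1\otimes p_2)=(p_1,p_2)$ it suffices to verify $\widetilde{\pi}_i(p_1\otimes p_2)=p_i$ for $i\in\{1,2\}$.

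Next I would carry out the two (symmetric) coordinate computations. By \cite[Lemma 3.30]{hs} we have $\widetilde{\pi}_1(p_1\otimes p_2)=\{A\subseteq S_1:\pi_1^{-1}(A)\in p_1\otimes p_2\}$. Since $\pi_1^{-1}(A)=A\times S_2$, I would invoke the rectangle criterion recorded immediately before the theorem, namely that $A\times B\in p_1\otimes p_2$ if and only if $A\in p_1$ and $B\in p_2$. Because $S_2\in p_2$ automatically, this gives $A\times S_2\in p_1\otimes p_2$ if and only if $A\in p_1$, whence $\widetilde{\pi}_1(p_1\otimes p_2)=p_1$. The identical argument with $\pi_2^{-1}(B)=S_1\times B$ and $S_1\in p_1$ yields $\widetilde{\pi}_2(p_1\otimes p_2)=p_2$. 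Combining the two coordinate identities with the structural observation of the previous paragraph then forces $\tilde{\iota}(p_1\otimes p_2)=(p_1,p_2)$.

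This argument is essentially routine and I do not expect a genuine obstacle. The only step that requires care is the justification of $proj_i\circ\tilde{\iota}=\widetilde{\pi}_i$, which relies on the uniqueness of continuous extensions to $\beta(S_1\times S_2)$ together with the density of $S_1\times S_2$ and the fact that the codomain $\beta S_i$ is compact Hausdorff; everything else reduces to the elementary rectangle criterion for membership in a tensor product. One should merely be attentive that the two continuous maps being compared land in a Hausdorff space, so that agreement on a dense set indeed propagates to all of $\beta(S_1\times S_2)$.
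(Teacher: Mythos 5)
Your proposal is correct and follows essentially the same route as the paper: both reduce the claim to showing $\widetilde{\pi}_i(p_1\otimes p_2)=p_i$ via \cite[Lemma 3.30]{hs} and the rectangle criterion $A\times B\in p_1\otimes p_2\iff A\in p_1,\ B\in p_2$ applied to $\pi_1^{-1}(A)=A\times S_2$ and $\pi_2^{-1}(B)=S_1\times B$. The only difference is that you spell out the justification of $proj_i\circ\tilde{\iota}=\widetilde{\pi}_i$ (agreement on a dense set with compact Hausdorff codomain), which the paper takes for granted here but records explicitly in its later lemmas.
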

\begin{proof}
For each $i\in\{1,2\}$, if $\pi_i:S_1\times S_2\rightarrow S_i$ is the natural projection homomorphism onto the $i^{th}$ coordinate and if $\widetilde{\pi}_i:\beta(S_1\times S_2)\rightarrow\beta S_i$ is the continuous extension of $\pi_i$, then $\tilde{\iota}(p_1\otimes p_2)=(\widetilde{\pi}_1(p_1\otimes p_2),\widetilde{\pi}_2(p_1\otimes p_2))$. Now $\widetilde{\pi}_1(p_1\otimes p_2)=\{A_1\subseteq S_1:\pi_1^{-1}(A_1)\in p_1\otimes p_2\}=\{A_1\subseteq S_1:A_1\times S_2\in p_1\otimes p_2\}=p_1$. The proof of $\widetilde{\pi}_2(p_1\otimes p_2)=p_2$ is exactly similar. Thus $\tilde{\iota}(p_1\otimes p_2)=(p_1, p_2)$.
\end{proof}
The above theorem now provides us the following version near idempotent.
\begin{corollary}
For each $i\in\{1,2\}$, let $S_i$ be a near $e_i$ subsemigroup of a semitopological semigroup $T_i$. Let $(e_1)_{S_1}^*\otimes (e_2)_{S_2}^*=\{p\otimes q:p\in (e_1)_{S_1}^*, \,q\in (e_2)_{S_2}^*\}$. Let $\tilde{\iota}:\beta (S_1\times S_2)\rightarrow \beta S_1\times \beta S_2$ be the continuous extension of the inclusion map $\iota:(S_1\times S_2)_d\rightarrow \beta S_1\times \beta S_2$. Then $\tilde{\iota}[(e_1)_{S_1}^*\otimes (e_2)_{S_2}^*]=(e_1)_{S_1}^*\times (e_2)_{S_2}^*$.
\end{corollary}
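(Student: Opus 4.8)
The plan is to read this off directly from the preceding theorem, which asserts that $\tilde{\iota}(p\otimes q)=(p,q)$ for every $p\in\beta S_1$ and $q\in\beta S_2$. Once that identity is in hand, both inclusions reduce to bookkeeping with the definition of the set $(e_1)_{S_1}^*\otimes(e_2)_{S_2}^*$, and no genuine topological input beyond the definitions of the sets $(e_i)_{S_i}^*$ is required. The sets $(e_i)_{S_i}^*$ are subsets of $\beta(S_i)_d$, so the discrete setting of the preceding theorem applies verbatim.

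First I would establish the inclusion $\tilde{\iota}[(e_1)_{S_1}^*\otimes(e_2)_{S_2}^*]\subseteq(e_1)_{S_1}^*\times(e_2)_{S_2}^*$. A typical element of the domain is $p\otimes q$ with $p\in(e_1)_{S_1}^*$ and $q\in(e_2)_{S_2}^*$. Since the tensor product of two ultrafilters is itself an ultrafilter on the product, $p\otimes q$ lies in $\beta(S_1\times S_2)$ and so is in the domain of $\tilde{\iota}$. Applying the preceding theorem gives $\tilde{\iota}(p\otimes q)=(p,q)$, which lies in $(e_1)_{S_1}^*\times(e_2)_{S_2}^*$; this yields the forward inclusion.

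For the reverse inclusion I would take an arbitrary $(p,q)\in(e_1)_{S_1}^*\times(e_2)_{S_2}^*$. By the very definition of $(e_1)_{S_1}^*\otimes(e_2)_{S_2}^*$, the ultrafilter $p\otimes q$ belongs to this set, and the preceding theorem again gives $\tilde{\iota}(p\otimes q)=(p,q)$. Hence $(p,q)\in\tilde{\iota}[(e_1)_{S_1}^*\otimes(e_2)_{S_2}^*]$, establishing $(e_1)_{S_1}^*\times(e_2)_{S_2}^*\subseteq\tilde{\iota}[(e_1)_{S_1}^*\otimes(e_2)_{S_2}^*]$ and completing the argument.

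The only point deserving a word of care is checking that $p\otimes q$ genuinely lies in $\beta(S_1\times S_2)$ so that $\tilde{\iota}$ may be applied, which is exactly the fact that the tensor product of ultrafilters is an ultrafilter on the Cartesian product. There is no substantial obstacle here: the statement is a formal corollary of the displayed computation $\tilde{\iota}(p\otimes q)=(p,q)$ together with the definition of the tensor product set.
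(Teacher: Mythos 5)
Your proposal is correct and matches the paper's intent exactly: the paper states this corollary without proof, remarking only that ``the above theorem now provides'' it, i.e.\ it is read off pointwise from the identity $\tilde{\iota}(p\otimes q)=(p,q)$ in precisely the way you spell out. Your two-inclusion bookkeeping, including the observation that $p\otimes q$ is an ultrafilter on $S_1\times S_2$ so that $\tilde{\iota}$ applies, fills in the routine details the paper leaves implicit.
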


We now recall the definition of higher dimensional tensor product of ultrafilters \cite[definition 1.15]{bhw}.
\begin{definition}\label{4.4}
Let $l\in \mathbb{N}$, and for each $i\in \{1,2, \ldots,l\}$ let $S_i$ be a semigroup and let $p_i\in \beta S_i$. We define $\otimes_{i=1}^l p_i\in \beta\big(\bigtimes_{i=1}^lS_i\big)$ inductively as follows.
\begin{enumerate}
    \item  $\otimes_{i=1}^1p_i=p_1$,
    \item Given $l\in \mathbb{N}$ and $A\subseteq \bigtimes_{i=1}^{l+1} S_i$, $A\in \otimes_{i=1}^{l+1}p_i$ if $\{(x_1, x_2, \ldots, x_l)\in \bigtimes_{i=1}^l S_i:\{x_{l+1}\in S_{l+1}:(x_1, x_2, \ldots, x_{l+1})\in A\}\in p_{l+1}\}\in \otimes_{i=1}^l p_i$.
\end{enumerate}
\end{definition}
\begin{lemma}\label{4.5}
Let $l\in\mathbb{N}$. For each $i\in\{1,2,\ldots,l\}$ let $S_i\subseteq (0,\infty)$ be a dense subsemigroup of $([0,\infty),+)$ and let $p_i\in 0_{S_i}^*$. Let us consider the product semigroup $S=\bigtimes_{i=1}^lS_i$. Then $\otimes_{i=1}^l p_i\in\overline{0}_S^*$.
\end{lemma}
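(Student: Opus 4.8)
The plan is to unwind both defining notions to a single combinatorial check on basic neighborhoods of $\overline{0}$. Recall from the remark following the first definition that $\overline{0}_S^*=\{q\in\beta S: V\cap S\in q \text{ for each } V\in\tau_{\overline{0}}\}$, where the ambient semitopological semigroup is $T=\bigtimes_{i=1}^l([0,\infty),+)=([0,\infty)^l,+)$ with the (box $=$ product, since the index set is finite) topology. In this topology the cubes $[0,\epsilon)^l$, $\epsilon>0$, form a neighborhood base at $\overline{0}$, and every ultrafilter is upward closed, so it suffices to prove that $[0,\epsilon)^l\cap S\in\otimes_{i=1}^l p_i$ for every $\epsilon>0$: any neighborhood $V$ of $\overline{0}$ contains some such cube, whence $V\cap S\supseteq [0,\epsilon)^l\cap S$ lands in $\otimes_{i=1}^l p_i$ as well.

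The key step is the box rule for the iterated tensor product: for sets $A_i\subseteq S_i$, one has $\bigtimes_{i=1}^l A_i\in\otimes_{i=1}^l p_i$ if and only if $A_i\in p_i$ for each $i\in\{1,2,\ldots,l\}$. First I would establish this by induction on $l$, the case $l=1$ being trivial and the case $l=2$ being exactly the observation recorded just after the definition of the tensor product (namely $A\times B\in p\otimes q$ iff $A\in p \text{ and } B\in q$). For the inductive step I would feed $\bigtimes_{i=1}^{l+1}A_i$ into clause (2) of Definition \ref{4.4} and compute the inner slice: for a fixed $(x_1,\ldots,x_l)$ the set $\{x_{l+1}\in S_{l+1}:(x_1,\ldots,x_{l+1})\in\bigtimes_{i=1}^{l+1}A_i\}$ equals $A_{l+1}$ when $(x_1,\ldots,x_l)\in\bigtimes_{i=1}^l A_i$ and is empty otherwise. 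Hence this slice lies in $p_{l+1}$ precisely when $A_{l+1}\in p_{l+1}$ and $(x_1,\ldots,x_l)\in\bigtimes_{i=1}^l A_i$, so the outer set appearing in Definition \ref{4.4}(2) is $\bigtimes_{i=1}^l A_i$ if $A_{l+1}\in p_{l+1}$ and is empty otherwise. Thus $\bigtimes_{i=1}^{l+1}A_i\in\otimes_{i=1}^{l+1}p_i$ iff $A_{l+1}\in p_{l+1}$ and $\bigtimes_{i=1}^l A_i\in\otimes_{i=1}^l p_i$, and the inductive hypothesis finishes the claim.

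With the box rule in hand the conclusion is immediate. Fix $\epsilon>0$ and set $A_i=S_i\cap(0,\epsilon)$. Since $p_i\in 0_{S_i}^*$, the characterization of $0_{S_i}^*$ gives $A_i=S_i\cap(0,\epsilon)\in p_i$ for each $i$ (the intervals $[0,\epsilon)\cap S_i=(0,\epsilon)\cap S_i$ being basic neighborhoods of $0$ intersected with $S_i$). The box rule then yields $[0,\epsilon)^l\cap S=\bigtimes_{i=1}^l\big(S_i\cap(0,\epsilon)\big)=\bigtimes_{i=1}^l A_i\in\otimes_{i=1}^l p_i$. As $\epsilon>0$ was arbitrary, the reduction above shows $\otimes_{i=1}^l p_i\in\overline{0}_S^*$.

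As for difficulty, there is no genuine obstacle here. The only care needed is the bookkeeping in the inductive verification of the box rule—correctly identifying the inner slice as $A_{l+1}$ or $\emptyset$ and passing it through clause (2) of Definition \ref{4.4}—together with remembering that $\otimes_{i=1}^l p_i$ is by construction an ultrafilter on $S=\bigtimes_{i=1}^l S_i$, so that upward closure is available in the reduction to cubes.
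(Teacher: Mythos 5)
Your proof is correct and takes essentially the same approach as the paper: the paper's own proof simply asserts that $U^l\cap S\in\otimes_{i=1}^l p_i$ for each $U\in\tau_0$ ``follows directly from Definition \ref{4.4}.'' Your inductive verification of the box rule is precisely the detail the paper leaves to the reader, so you have merely made the same argument explicit.
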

\begin{proof}
It suffices to show that for each $U\in\tau_0$, $U^l\cap S\in\otimes_{i=1}^lp_i$ which follows directly from Definition \ref{4.4}.
\end{proof}
We now recall \cite[Corollary 2.8 and Lemma 2.9]{bhw}.
\begin{lemma}\label{4.6}
Let $m, l\in \mathbb{N}$. For each $i\in \{1, 2, \ldots, l\}$, let $(S_i,+)$ be a semigroup,  $\langle x_{i, n}\rangle_{n=1}^{\infty}$ be a sequence in $S_i$ and  let $p_i$ be an idempotent in $\bigcap_{r=1}^{\infty} cl_{\beta S_i}\big(FS(\langle x_{i, n}\rangle_{n=r}^\infty)\big)$. Let $f:\{1, 2, \ldots, m\}\rightarrow \{1, 2, \ldots l \}$ be a function and let $A\in \otimes_{j=1}^m p_{f(j)}$. Then for each $i\in \{1, 2, \ldots, l\}$ there is a sum subsystem $FS\big(\langle y_{i, n}\rangle_{n=1}^{\infty}\big)$ of $FS\big(\langle x_{i, n}\rangle_{n=1}^\infty\big)$ such that 
$\big\{\big(\sum_{n\in F_1}y_{f(1), n}, \sum_{n\in F_2}y_{f(2), n},$ $\ldots, \sum_{n\in F_m}y_{f(m), n}\big): \text{each } F_j\in\mathcal{P}_f(\mathbb{N}) \text{ and } F_1<F_2<\ldots<F_m\big\}\subseteq A$.
\end{lemma}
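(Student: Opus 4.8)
The plan is to prove the statement by induction on $m$, combining two standard devices: the associativity of the tensor (Fubini) product of ultrafilters, which allows one to peel off a single coordinate at a time, and the Galvin--Glazer self-similarity of the idempotents $p_i$ (as recorded in \cite[Lemma 4.14]{hs} and already exploited in the proof of Theorem \ref{Theorem 2.8}(2)), which produces the block sums realizing the sum subsystems. For a partial tuple $(a_1,\ldots,a_k)$ with $a_j\in S_{f(j)}$ I write $A\langle a_1,\ldots,a_k\rangle=\{(x_{k+1},\ldots,x_m):(a_1,\ldots,a_k,x_{k+1},\ldots,x_m)\in A\}$ for the corresponding section of $A$. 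Associativity turns the definition (which peels the last coordinate) into a left-peeling criterion: $A\in\otimes_{j=1}^m p_{f(j)}$ iff $\{s\in S_{f(1)}:A\langle s\rangle\in\otimes_{j=2}^m p_{f(j)}\}\in p_{f(1)}$, and iteration of this is what drives the transitions across coordinates.

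For the base case $m=1$ we have $A\in p_{f(1)}$. For $i\neq f(1)$ one simply takes $y_{i,n}=x_{i,n}$, since the $1$-tuple conclusion imposes no constraint on those sequences. For $i=f(1)$, the hypothesis that $p_{f(1)}$ is an idempotent with $FS\big(\langle x_{f(1),n}\rangle_{n=r}^\infty\big)\in p_{f(1)}$ for every $r$ lets us run the Galvin--Glazer construction exactly as in Theorem \ref{Theorem 2.8}(2): passing to the starred set $A^\star=\{x\in A:-x+A\in p_{f(1)}\}\in p_{f(1)}$ and intersecting successively with the tail sets $FS\big(\langle x_{f(1),t}\rangle_{t>N}\big)$, one selects block sums $y_{f(1),n}=\sum_{t\in H_n}x_{f(1),t}$ with $H_1<H_2<\cdots$ so that $FS\big(\langle y_{f(1),n}\rangle_{n=1}^\infty\big)\subseteq A$, which is the desired conclusion for $m=1$.

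The heart of the matter is the inductive step, where a naive peeling fails: applying the induction hypothesis to the section filter $\otimes_{j=2}^m p_{f(j)}$ would in general return a sum subsystem of $S_{f(1)}$ conflicting with the one forced on the first coordinate, because when $f$ is not injective a single sequence $\langle y_{i,n}\rangle$ must serve every coordinate $j$ with $f(j)=i$. I therefore propose a single simultaneous (``diagonal'') induction on the stage $n$, in which $y_{i,n}$ is chosen for all $i\in\{1,\ldots,l\}$ at once as a block sum $\sum_{t\in H_{i,n}}x_{i,t}$ with $H_{i,n}$ lying beyond $H_{i,n-1}$. The invariant to be propagated after stage $n$ is: for every partial tuple $(a_1,\ldots,a_k)$ with $0\le k<m$ obtained as an ordered configuration $F_1<\cdots<F_k$ of already-committed terms, the section $A\langle a_1,\ldots,a_k\rangle$ lies in $\otimes_{j=k+1}^m p_{f(j)}$, and the associated constraint sets have already been replaced by their starred versions inside $p_{f(k+1)}$ so that adjoining a further term within the current block keeps the section large. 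Left-peeling supplies the transition across coordinates and the self-similarity $x\in B^\star\Rightarrow -x+B^\star\in p$ supplies the transition within a block. Since at each finite stage only finitely many partial tuples have been committed, and since for a fixed semigroup $i$ all pertinent constraint sets lie in the single ultrafilter $p_i$, their finite intersection (further intersected with the appropriate tail $FS$-set, which is in $p_i$) remains in $p_i$ and hence contains a block sum; the choices for distinct $i$ live in distinct $p_i$ and so do not interfere.

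The main obstacle is precisely the design and verified propagation of this invariant: one must reconcile the requirement of a single shared sum subsystem per semigroup with the global order condition $F_1<\cdots<F_m$, tracking simultaneously the across-coordinate tensor sections and the within-coordinate finite-sum behaviour, and checking that adjoining $y_{i,n}$ neither spoils the section of any committed prefix nor the left-peeling needed to close a block and open the next coordinate. Once the invariant is in place so that every constraint set sits in the correct $p_i$, the stage-by-stage selection of the $y_{i,n}$ is routine, and a concluding induction on $|F_1|+\cdots+|F_m|$ (mirroring the $|F|$-induction in the proof of Theorem \ref{Theorem 2.8}(2)) shows that each fully ordered tuple $\big(\sum_{n\in F_1}y_{f(1),n},\ldots,\sum_{n\in F_m}y_{f(m),n}\big)$ lands in $A$, completing the proof.
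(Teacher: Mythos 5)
The paper does not prove Lemma \ref{4.6} at all: it is imported verbatim from \cite[Corollary 2.8]{bhw} (the text introduces it with ``We now recall [Corollary 2.8 and Lemma 2.9]''), so there is no internal argument to compare yours against. Your reconstruction is, in substance, the argument by which the cited result is established in \cite{bhw}: a simultaneous Galvin--Glazer induction in which, at stage $n$, one block sum $y_{i,n}$ is chosen per semigroup from a finite intersection of members of $p_i$ (a tail $FS$-set together with the starred constraint sets attached to the finitely many committed partial tuples), with the Fubini/left-peeling form of Definition \ref{4.4} governing transitions across coordinates and the star lemma \cite[Lemma 4.14]{hs} governing transitions within a block; the closing induction on $|F_1|+\cdots+|F_m|$, mirroring the $|F|$-induction in Theorem \ref{Theorem 2.8}(2), is the right finishing move. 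Your diagnosis of the one real pitfall --- that when $f$ is not injective a single sequence must serve every coordinate $j$ with $f(j)=i$, so coordinate-by-coordinate peeling would produce conflicting subsystems --- is exactly the point, and your resolution (all constraints for a fixed $i$ live in the single ultrafilter $p_i$, so finite intersections remain in $p_i$) is the correct one. Two caveats: the left-peeling criterion needs a short induction from the right-peeling Definition \ref{4.4} (associativity of $\otimes$), which you assert but should record; and the invariant itself is stated schematically rather than verified --- in particular the bookkeeping separating ``open'' blocks (still extendable within coordinate $k$, requiring $-y+C^\star\in p_{f(k)}$) from ``closed'' prefixes (which hand off to $p_{f(k+1)}$ by peeling) must actually be written out, since you yourself flag it as the heart of the matter. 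As it stands your text is a faithful and workable proof plan rather than a complete proof; the paper's choice to cite \cite{bhw} avoids precisely this bookkeeping, while your route, once the invariant is verified, would make the section self-contained.
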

\begin{lemma}\label{4.7}
Let $m, l\in \mathbb{N}$. For each $i\in \{1, 2, \ldots, l\}$, let $(S_i,+)$ be a semigroup, let $\langle x_{i, n}\rangle_{n=1}^{\infty}$ be a sequence in $S_i$ and let $p_i\in \bigcap_{r=1}^{\infty} cl_{\beta S_i}\big(FS(\langle x_{i, n}\rangle_{n=r}^\infty)\big)$. Then for any function $f:\{1, 2, \ldots, m\}\rightarrow \{1, 2, \ldots, l\}$, $\big\{\big(\sum_{n\in F_1}x_{f(1), n},$ $\sum_{n\in F_2}x_{f(2), n},\ldots, \sum_{n\in F_m}x_{f(m), n}\big): \text{each } F_j\in\mathcal{P}_f(\mathbb{N}) \text{ and } F_1<F_2<\ldots<F_m\big\}\in \otimes_{j=1}^mp_{f(j)}$.
\end{lemma}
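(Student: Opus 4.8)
The plan is to induct on $m$, peeling off one coordinate at a time according to the inductive Definition \ref{4.4} of $\otimes_{j=1}^m p_{f(j)}$. The only external fact I need is the standard correspondence between closures and membership in $\beta S$: for $C\subseteq S_i$ one has $p_i\in cl_{\beta S_i}(C)$ if and only if $C\in p_i$. Applying this to the hypothesis $p_i\in\bigcap_{r=1}^\infty cl_{\beta S_i}\big(FS(\langle x_{i,n}\rangle_{n=r}^\infty)\big)$ shows that every tail $FS(\langle x_{i,n}\rangle_{n=r}^\infty)$ is a member of $p_i$. I will also use repeatedly that ultrafilters are closed under taking supersets.

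Write $B_m$ for the $m$-coordinate set appearing in the statement (for the function $f$ restricted to the first $m$ indices). For the base case $m=1$, the set $B_1$ is precisely $FS(\langle x_{f(1),n}\rangle_{n=1}^\infty)$, which lies in $p_{f(1)}=\otimes_{j=1}^1 p_{f(j)}$ by the membership fact above. For the inductive step, assume $B_m\in\otimes_{j=1}^m p_{f(j)}$. By Definition \ref{4.4}(2), to conclude $B_{m+1}\in\otimes_{j=1}^{m+1}p_{f(j)}$ it suffices to prove
\[
B_m\subseteq\Big\{(v_1,\ldots,v_m):\{w\in S_{f(m+1)}:(v_1,\ldots,v_m,w)\in B_{m+1}\}\in p_{f(m+1)}\Big\},
\]
because the right-hand set is then a superset of a member of $\otimes_{j=1}^m p_{f(j)}$, hence itself a member, which is exactly the defining condition for $B_{m+1}$.

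To verify this inclusion I would fix $(v_1,\ldots,v_m)\in B_m$ together with a witnessing family of blocks $F_1<F_2<\cdots<F_m$ satisfying $v_j=\sum_{n\in F_j}x_{f(j),n}$, and set $r=\max F_m+1$. Every $F_{m+1}\in\mathcal{P}_f(\mathbb{N})$ with $\min F_{m+1}\geq r$ then satisfies $F_m<F_{m+1}$, so $(v_1,\ldots,v_m,\sum_{n\in F_{m+1}}x_{f(m+1),n})\in B_{m+1}$. Thus the slice $\{w:(v_1,\ldots,v_m,w)\in B_{m+1}\}$ contains $FS(\langle x_{f(m+1),n}\rangle_{n=r}^\infty)$, which is a member of $p_{f(m+1)}$ by the first paragraph, and hence the slice itself belongs to $p_{f(m+1)}$ by upward closure. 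This places $(v_1,\ldots,v_m)$ in the right-hand set and completes the induction.

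The argument is essentially routine once the definition is unwound, and I note that it does not use idempotency of the $p_i$, in contrast to Lemma \ref{4.6}. The one point I would state carefully is that an element of $B_m$ may admit several representations as block sums, but a single representation suffices to locate a tail $FS$-set inside the slice; since membership in an ultrafilter is witnessed by any one large enough subset, there is no need to control all representations at once. Keeping the shift $r=\max F_m+1$ consistent with the ordering requirement $F_1<\cdots<F_{m+1}$ is the only bookkeeping that needs attention, and I expect that to be the sole place where a reader might otherwise worry.
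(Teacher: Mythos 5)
Your proof is correct. Note first that the paper itself gives no argument for this statement: it is recalled verbatim from Bergelson--Hindman--Williams (\cite[Lemma 2.9]{bhw}), so there is no in-paper proof to compare against. Your induction on $m$ is the natural, self-contained argument and it is sound: the identification $cl_{\beta S_i}(C)=\{q\in\beta S_i: C\in q\}$ turns the hypothesis into ``every tail $FS\big(\langle x_{i,n}\rangle_{n=r}^\infty\big)$ lies in $p_i$''; the base case is exactly this fact for $r=1$; and in the inductive step, showing $B_m\subseteq\big\{(v_1,\ldots,v_m):\{w:(v_1,\ldots,v_m,w)\in B_{m+1}\}\in p_{f(m+1)}\big\}$ via the shift $r=\max F_m+1$ is precisely what Definition \ref{4.4}(2) requires, with upward closure of the ultrafilters doing the rest. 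Your handling of the one delicate point --- that an element of $B_m$ may have many block-sum representations, but a single representation suffices to trap a tail $FS$-set inside the (fixed) slice --- is exactly right. Your side remark is also accurate and worth keeping: idempotency of the $p_i$ plays no role here, which is why Lemma \ref{4.7} holds for arbitrary $p_i$ in the intersection of the closures, whereas Lemma \ref{4.6} (used to pull sum subsystems \emph{out of} a member of the tensor product) genuinely needs $p_i+p_i=p_i$.
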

\begin{definition}
Let $m\in\mathbb{N}$ and let $(S,+)$ be a semigroup. Let $\langle a_j\rangle_{j=1}^m$ and $\langle x_n\rangle_{n=1}^\infty$ be sequences in $\mathbb{N}$ and $S$, respectively. The Milliken-Taylor System determined by $\langle a_j\rangle_{j=1}^m$ and $\langle x_n\rangle_{n=1}^\infty$ is $MT\big(\langle a_j\rangle_{j=1}^m,\langle x_n\rangle_{n=1}^\infty\big)=\big\{\sum_{j=1}^m a_j\sum_{n\in F_j}x_n:\text{each } F_j\in\mathcal{P}_f(\mathbb{N}) \text{ and } F_1<F_2<\cdots<F_m\big\}$.
\end{definition}
Regarding the following discussion, note that if $p\in\beta ([0,\infty))$ and $a\in\mathbb{N}$, then $ap$ is the product in $\beta([0,\infty))$ and not the sum of $p$ with itself $a$ times. It is not true in general that $a_1p+a_2p=(a_1+a_2)p$.
\begin{theorem}\label{Theorem 4.9}
Let $m\in\mathbb{N}$ and let $S\subseteq (0,\infty)$ be a dense subsemigroup of $([0,\infty),+)$. Let $\langle a_j\rangle_{j=1}^m$ be a sequence in $\mathbb{N}$ and let $\langle x_n\rangle_{n=1}^\infty$ be a sequence in $S$ such that $\lim_{n\rightarrow\infty}x_n=0$. Let $g(z)=\sum_{j=1}^m a_jz$ and let $A\subseteq S$. The following statements are equivalent.
\begin{enumerate}
\item\label{4.9(1)} There is an idempotent $p\in\bigcap_{r=1}^\infty cl_{\beta S}\big(FS(\langle x_n\rangle_{n=r}^\infty)\big)$ such that $A\in g(p)$. 
\item\label{4.9(2)} For each $\epsilon>0$, there is a sum subsystem $FS(\langle y_n\rangle_{n=1}^\infty)$ of $FS(\langle x_n\rangle_{n=1}^\infty)$ such that $MT\big(\langle a_j\rangle_{j=1}^m,\langle y_n\rangle_{n=1}^\infty\big)\subseteq A\cap (0,\epsilon)$.
\end{enumerate}
\end{theorem}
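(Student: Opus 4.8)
The plan is to identify the ultrafilter $g(p)$ with the pushforward of the higher tensor power $\otimes_{j=1}^m p$ under the linear form, so that the tensor-product machinery of Lemma \ref{4.6} and Lemma \ref{4.7} applies directly. Define $\sigma:S^m\to S$ by $\sigma(z_1,\dots,z_m)=\sum_{j=1}^m a_jz_j$ (this lands in $S$ since $a_jz_j=z_j+\dots+z_j\in S$), a homomorphism from $(S^m,+)$, and write $\widetilde{\sigma}:\beta(S^m)\to\beta S$ for its continuous extension. The first step is the bookkeeping identity $\widetilde{\sigma}\big(\otimes_{j=1}^m p\big)=g(p)$. I would prove it by induction on $m$: unwinding $\widetilde{\sigma}(q)=\{B:\sigma^{-1}(B)\in q\}$ (\cite[Lemma 3.30]{hs}) together with Definition \ref{4.4}, and using that each $a_jp$ is the multiplicative product $\{B:\{x:a_jx\in B\}\in p\}$ in $\beta([0,\infty))$, the condition $B\in\widetilde{\sigma}(\otimes_{j=1}^m p)$ reduces exactly to $B\in a_1p+\dots+a_mp=g(p)$; the base case $m=1$ is $\widetilde{\sigma}(p)=a_1p$. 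The crucial geometric observation is that for the ordered product set $P=\big\{\big(\sum_{n\in F_1}y_n,\dots,\sum_{n\in F_m}y_n\big):F_1<\dots<F_m\big\}$ of a sequence $\langle y_n\rangle$ one has $\sigma[P]=MT\big(\langle a_j\rangle_{j=1}^m,\langle y_n\rangle\big)$, so that $P\subseteq\sigma^{-1}(C)$ is equivalent to $MT\big(\langle a_j\rangle_{j=1}^m,\langle y_n\rangle\big)\subseteq C$.

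For $(\ref{4.9(2)})\Rightarrow(\ref{4.9(1)})$ I would start from any single sum subsystem $\langle y_n\rangle$ with $MT\big(\langle a_j\rangle,\langle y_n\rangle\big)\subseteq A$ (take $\epsilon=1$). Put $J=\bigcap_{r=1}^\infty cl_{\beta S}\big(FS(\langle y_n\rangle_{n=r}^\infty)\big)$; by \cite[Lemma 5.11]{hs} this is a compact subsemigroup, so it contains an idempotent $p$. Since $y_n=\sum_{t\in H_n}x_t$ with $\langle H_n\rangle$ increasing, $FS(\langle y_n\rangle_{n=r}^\infty)\subseteq FS(\langle x_n\rangle_{n=\min H_r}^\infty)$ and $\min H_r\to\infty$, so $p\in\bigcap_{r=1}^\infty cl_{\beta S}\big(FS(\langle x_n\rangle_{n=r}^\infty)\big)$. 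Then Lemma \ref{4.7} (with $l=1$ and $f$ constant) gives $P\in\otimes_{j=1}^m p$, while $\sigma[P]=MT\subseteq A$ forces $P\subseteq\sigma^{-1}(A)$, whence $\sigma^{-1}(A)\in\otimes_{j=1}^m p$ and therefore $A\in\widetilde{\sigma}\big(\otimes_{j=1}^m p\big)=g(p)$. The $\epsilon$ in (\ref{4.9(2)}) is not used here, so this is the easier direction.

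For $(\ref{4.9(1)})\Rightarrow(\ref{4.9(2)})$ I would fix $\epsilon>0$ and run Lemma \ref{4.6} against the target $\sigma^{-1}\big(A\cap(0,\epsilon)\big)$. Granting $A\cap(0,\epsilon)\in g(p)$, one gets $\sigma^{-1}\big(A\cap(0,\epsilon)\big)\in\otimes_{j=1}^m p$, and Lemma \ref{4.6} (again $l=1$, $f$ constant) produces a sum subsystem $\langle y_n\rangle$ of $\langle x_n\rangle$ whose ordered product set lies in $\sigma^{-1}\big(A\cap(0,\epsilon)\big)$; applying $\sigma$ yields $MT\big(\langle a_j\rangle,\langle y_n\rangle\big)\subseteq A\cap(0,\epsilon)$, which is exactly (\ref{4.9(2)}).

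The main obstacle is precisely the step $A\cap(0,\epsilon)\in g(p)$, i.e. that $g(p)$ concentrates near $0$; without it, $(0,\epsilon)$ cannot be absorbed into the target and the forward direction collapses (for $m=1$ one sees that $g(p)=a_1p$ is near $0$ if and only if $p$ is). I expect this to be the delicate point on which the argument rests, and it is where the hypothesis $\lim_{n\to\infty}x_n=0$ must genuinely be leveraged: the idea is to arrange that the witnessing idempotent $p$ lies in $0_S^*$, so that, by the near-zero subsemigroup reasoning behind Lemma \ref{4.5}, each multiplicative multiple $a_jp$ and hence $g(p)=a_1p+\dots+a_mp$ again lies in $0_S^*$; then $(0,\epsilon)\cap S\in g(p)$ and, $g(p)$ being an ultrafilter with $A\in g(p)$, also $A\cap(0,\epsilon)\in g(p)$. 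Establishing that $p$ may be taken near $0$ (so that $g(p)$ is near $0$) is exactly what distinguishes this near-zero statement from its classical Milliken–Taylor analogue, and I would treat it as the heart of the proof.
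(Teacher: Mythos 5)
Your backward direction ((\ref{4.9(2)}) $\Rightarrow$ (\ref{4.9(1)})) is complete and correct, and it takes a genuinely different route from the paper: where the paper proves $MT\big(\langle a_j\rangle_{j=1}^m,\langle y_n\rangle_{n=1}^\infty\big)\in a_1p+a_2p+\cdots+a_mp$ directly, by a downward induction on $k$ showing membership in $a_kp+\cdots+a_mp$, you obtain it by pushing the ordered-tuple set $P$ forward under $\sigma$, using Lemma \ref{4.7} together with the identity $\widetilde{\sigma}\big(\otimes_{j=1}^m p\big)=g(p)$. That identity is correct (it is the linear, one-variable case of \cite[Theorem 3.2]{bhw}, the very identity the paper invokes in Theorem \ref{4.9}), so this direction stands. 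In effect you are running the strategy of the paper's Theorems \ref{4.9} and \ref{4.11} specialized to a single sequence, which buys a uniform treatment of both directions from Lemmas \ref{4.6} and \ref{4.7}, whereas the paper's forward direction instead defers to the proof of \cite[Theorem 5.7]{dh}.

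The forward direction, however, has a genuine gap, and it sits exactly where you said the heart of the proof is: you never establish that $g(p)\in 0_S^*$, i.e.\ that $A\cap(0,\epsilon)\in g(p)$; you only propose to "arrange that the witnessing idempotent $p$ lies in $0_S^*$". But $p$ is handed to you by statement (\ref{4.9(1)}), and the hypothesis $\lim_{n\to\infty}x_n=0$ alone does not force $p\in 0_S^*$. Concretely, take $S=\mathbb{Q}\cap(0,\infty)$, $x_n=1/n$, $m=1$, $a_1=1$: since $\sum_{n\geq r}x_n=\infty$ for every $r$, the sets $FS(\langle x_n\rangle_{n=r}^\infty)\cap[M,\infty)$ are all nonempty, so $J\cap\bigcap_{M\in\mathbb{N}}cl_{\beta S}\big([M,\infty)\cap S\big)$ is a nonempty compact subsemigroup of $J=\bigcap_{r=1}^\infty cl_{\beta S}\big(FS(\langle x_n\rangle_{n=r}^\infty)\big)$ and contains an idempotent $p$; then $A=FS(\langle x_n\rangle_{n=1}^\infty)\cap[1,\infty)$ belongs to $g(p)=p$, yet no $FS$-set can lie in $A\cap(0,\epsilon)=\emptyset$ for $\epsilon<1$. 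So the step cannot be "arranged" at the stated level of generality; it needs the stronger standing assumption that $\sum_{n=1}^\infty x_n$ converges (near $0$). That is precisely how the paper closes this hole: its proof begins the forward direction with "by passing to a subsequence, we may presume that $\sum_{n=1}^\infty x_n$ converges", after which every element of $J$ lies in $0_S^*$ (given $\epsilon>0$, some $FS(\langle x_n\rangle_{n=r}^\infty)\subseteq(0,\epsilon)$ belongs to $p$), each $a_jp\in 0_S^*$ because $\{x\in S:a_jx\in(0,\epsilon)\}\supseteq(0,\epsilon/a_j)\cap S\in p$, and then $g(p)\in 0_S^*$ since $0_S^*$ is a subsemigroup. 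Your proposal never makes this reduction (and, with $p$ fixed in advance, passing to a subsequence is itself delicate, since $p$ need not lie in the FS-closure intersection of the subsequence); once the convergence assumption is in place, your argument goes through verbatim, but as written the forward implication is unproven.
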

\begin{proof}
(\ref{4.9(1)}) implies (\ref{4.9(2)}). Let $p$ be an idempotent in $\bigcap_{r=1}^\infty cl_{\beta S}\big(FS(\langle x_n\rangle_{n=r}^\infty)\big)$ such that $A\in g(p)$. By passing to a subsequence, we may presume that $\sum_{n=1}^\infty x_n$ converges. Note that since $\sum_{n=1}^\infty x_n$ converges, $p\in 0_S^*$. Moreover, by using the definition of product it is very easy to see that for each $j\in\{1,2,\ldots,m\}$ and for each $\epsilon>0$ we have that $(0,\epsilon)\cap S\in a_jp$, i.e. $a_jp\in 0_S^*$. Thus $g(p)\in 0_S^*$. So it suffices to show that whenever $Q\in g(p)$, there is a sum subsystem $FS\big(\langle y_n\rangle_{n=1}^\infty\big)$ of $FS\big(\langle x_n\rangle_{n=1}^\infty\big)$ such that $MT\big(\langle a_j\rangle_{j=1}^m,\langle y_n\rangle_{n=1}^\infty\big)\subseteq Q$. The rest of the part now follows from the proof of \cite[Theorem 5.7]{dh}.

(\ref{4.9(2)}) implies (\ref{4.9(1)}). Fix $\epsilon>0$ and pick a sum subsystem $FS\big(\langle y_n\rangle_{n=1}^\infty\big)$ of $FS\big(\langle x_n\rangle_{n=1}^\infty\big)$ such that $MT\big(\langle a_j\rangle_{j=1}^m,\langle y_n\rangle_{n=1}^\infty\big)\subseteq A\cap (0,\epsilon)$. Pick by \cite[Lemma 5.11]{hs} an idempotent $p\in \bigcap_{r=1}^\infty cl_{\beta S}\big(FS\big(\langle y_n\rangle_{n=r}^\infty\big)\big)\subseteq\bigcap_{r=1}^\infty cl_{\beta S}\big(FS\big(\langle x_n\rangle_{n=r}^\infty)\big)$. We claim that $MT\big(\langle a_j\rangle_{j=1}^m,\langle y_n\rangle_{n=1}^\infty\big)\in g(p)$, for which it suffices to show by downward induction on $k\in\{1,2,\ldots,m\}$ that for each $r\in\mathbb{N}$, $\big\{\sum_{j=k}^m a_j\sum_{n\in F_j}y_t: \text{each } F_j\in\mathcal{P}_f(\{r,r+1,\ldots\}) \text{ and } F_k<F_{k+1}<\cdots <F_m\big\}\in a_kp+a_{k+1}p+\cdots +a_mp$.

For $k=m$, we have $FS\big(\langle y_n\rangle_{n=r}^\infty\big)\in p$ so that $FS\big(\langle a_m y_n\rangle_{n=r}^\infty\big)\in a_m p$. So let $k\in\{1,2,\ldots,m-1\}$ and assume that the assertion is true for $k+1$. Let $r\in\mathbb{N}$ and let $B=\big\{\sum_{j=k}^ma_j\sum_{n\in F_j}y_n: \text{each } F_j\in\mathcal{P}_f(\{r,r+1,\ldots\}) \text{ and } F_k<F_{k+1}<\cdots<F_m\big\}$. We show that $FS\big(\langle a_ky_n\rangle_{n=r}^\infty\big)\subseteq \{y\in S:-y+B\in a_{k+1}p+a_{k+2}p+\cdots +a_mp\}$.

So let $b\in FS\big(\langle a_ky_n\rangle_{n=r}^\infty\big)$ and pick $F_k\in\mathcal{P}_f(\{r,r+1,\ldots\})$ such that $b=a_k\sum_{n\in F_k}y_n$. Let $t=\max F_k +1$. Then $\big\{\sum_{j=k+1}^ma_j\sum_{n\in F_j}y_n: \text{each } F_j\in\mathcal{P}_f(\{t,t+1,\ldots\}) \text{ and } F_{k+1}<F_{k+2}<\cdots <F_m\big\}\subseteq -b+B$, so $-b+B\in a_{k+1}p+a_{k+2}p+\cdots +a_mp$ as required. Since $MT\big(\langle a_j\rangle_{j=1}^m,\langle y_n\rangle_{n=1}^\infty\big)\subseteq A\cap (0,\epsilon)\subseteq A$ and $MT\big(\langle a_j\rangle_{j=1}^m,\langle y_n\rangle_{n=1}^\infty\big)\in g(p)$, we have $A\in g(p)$.
\end{proof}
\begin{theorem}\label{4.9}
Let $S\subseteq (0,\infty)$ be a dense subsemigroup of both $([0,\infty),+)$ and $([0,\infty),\cdot)$. Let $m,l\in\mathbb{N}$. For each $i\in\{1,2,\ldots,l\}$, let $\langle x_{i,n}\rangle_{n=1}^\infty$ be a sequence in $S$ such that $\lim_{n\rightarrow\infty}x_{i,n}=0$ and let $p_i\in\bigcap_{r=1}^\infty cl_{\beta S}\big(FS\big(\langle x_{i,n}\rangle_{n=r}^\infty\big)\big)$. Let $f:\{1,2,\ldots,m\}\rightarrow\{1,2,\ldots,l\}$ be a function and let $g\in\mathbb{N}[x_1,x_2,\ldots,x_m]$ be a polynomial with $g\big(\overline{0}\big)=0$. Then for each $\epsilon>0$, $\big\{g\big(\sum_{n\in F_1}x_{f(1), n},$ $\sum_{n\in F_2}x_{f(2), n}, \ldots,\sum_{n\in F_m} x_{f(m), n}\big): \text{each } F_j\in\mathcal{P}_f(\mathbb{N}) \text{ and } F_1<F_2<\cdots<F_m\big\} \cap (0, \epsilon)\in g\big(p_{f(1)}, p_{f(2)}, \ldots, p_{f(m)}\big)$.
\end{theorem}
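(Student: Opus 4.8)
The plan is to realise the set in the statement as the image, under the continuous extension of the polynomial map, of the Milliken--Taylor tuple set that Lemma \ref{4.7} already places inside a tensor product, and then to cut it down to $(0,\epsilon)$ by means of the near-zero property supplied by Lemma \ref{4.5}.

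First I would record the polynomial as a map. Since $g\in\mathbb{N}[x_1,\dots,x_m]$ has zero constant term and $S$ is closed under both $+$ and $\cdot$, the assignment $\hat g:S^m\to S$, $\hat g(z_1,\dots,z_m)=g(z_1,\dots,z_m)$, is well defined (and $g$ is necessarily nonzero, so $\hat g$ is strictly positive on $(0,\infty)^m\cap S^m$); let $\widetilde{\hat g}:\beta(S^m)\to\beta S$ be its continuous extension. The crucial identification is
$$g\big(p_{f(1)},\dots,p_{f(m)}\big)=\widetilde{\hat g}\Big(\textstyle\otimes_{j=1}^m p_{f(j)}\Big).$$
This is the step I expect to be most delicate. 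It rests on the observation that the additive and multiplicative operations of $\beta S$ are exactly the pushforwards along $p\otimes q$ of the two-coordinate addition and multiplication maps $S\times S\to S$, i.e.\ the continuous extensions of these maps send $p\otimes q$ to $p+q$ and to $p\cdot q$ respectively (a direct unwinding of the definitions of $+$, $\cdot$ and $\otimes$). Building $g$ up from its monomials then converts evaluation at the ultrafilters into the single pushforward $\widetilde{\hat g}$ of the $m$-fold tensor product, with each variable $x_j$ consuming the one coordinate that carries $p_{f(j)}$; in particular repeated occurrences of $x_j$ inside a monomial are read along the diagonal of that single coordinate, which is precisely what the set in the statement encodes.

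Next I would bring in the combinatorics and the size control. Writing
$$T=\Big\{\big(\textstyle\sum_{n\in F_1}x_{f(1),n},\dots,\sum_{n\in F_m}x_{f(m),n}\big):F_j\in\mathcal{P}_f(\mathbb{N}),\ F_1<\dots<F_m\Big\},$$
Lemma \ref{4.7} gives $T\in\otimes_{j=1}^m p_{f(j)}$, and by construction $\hat g[T]$ is exactly the set $\big\{g(\sum_{n\in F_1}x_{f(1),n},\dots,\sum_{n\in F_m}x_{f(m),n}):F_1<\dots<F_m\big\}$ from the statement. Noting that $p_{f(i)}\in 0_S^*$ (this is where the hypothesis $\lim_n x_{i,n}=0$ is needed, so that the tail finite sums are small), Lemma \ref{4.5} yields $\otimes_{j=1}^m p_{f(j)}\in\overline{0}_{S^m}^*$. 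Since $g$ is continuous with $g(\overline{0})=0$, for each $\epsilon>0$ the set $\{w\in[0,\infty)^m:g(w)<\epsilon\}$ is a neighbourhood of $\overline{0}$, so intersecting with $S^m$ gives $\hat g^{-1}\big((0,\epsilon)\cap S\big)=\{\vec z\in S^m:g(\vec z)<\epsilon\}\in\otimes_{j=1}^m p_{f(j)}$.

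Finally I would intersect and push forward. The set $T\cap\hat g^{-1}\big((0,\epsilon)\cap S\big)$ belongs to $\otimes_{j=1}^m p_{f(j)}$ as an intersection of two of its members, and it is contained in $\hat g^{-1}\big(\hat g[T]\cap(0,\epsilon)\big)$; applying $\widetilde{\hat g}$ therefore places $\hat g[T]\cap(0,\epsilon)$, which is precisely the asserted set, into $\widetilde{\hat g}\big(\otimes_{j=1}^m p_{f(j)}\big)=g(p_{f(1)},\dots,p_{f(m)})$. The main obstacle is thus the identification in the second paragraph --- verifying that evaluating the (possibly non-multilinear) polynomial at the ultrafilters coincides with the tensor-product pushforward, and in particular that repeated variables are handled diagonally within a single coordinate --- together with the verification that $p_{f(i)}\in 0_S^*$ so that Lemma \ref{4.5} is applicable; the remaining manipulations are routine ultrafilter bookkeeping.
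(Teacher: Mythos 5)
Your proposal is correct to the same standard as the paper's proof and takes essentially the same route: the paper likewise realises the asserted set as $g[A]\cap(0,\epsilon)$ with $A$ the tuple set supplied by Lemma \ref{4.7}, uses Lemma \ref{4.5} to place $\otimes_{j=1}^m p_{f(j)}$ in $\overline{0}_{S^m}^*$, and pushes forward along the continuous extension $\widetilde{g}$; the identification $\widetilde{g}\big(\otimes_{j=1}^m p_{f(j)}\big)=g\big(p_{f(1)},\ldots,p_{f(m)}\big)$ that you sketch monomial-by-monomial is simply quoted in the paper from \cite[Theorem 3.2]{bhw}. The one caveat is your parenthetical justification that $p_{f(i)}\in 0_S^*$ because ``the tail finite sums are small'': $\lim_{n\to\infty}x_{i,n}=0$ alone does not control finite sums of many small terms (e.g.\ $x_{i,n}=1/n$ has unbounded tail finite sums), but the paper glosses exactly the same point by ``passing to a subsequence'' so that $\sum_{n=1}^\infty x_{i,n}$ converges---which is equally delicate, since $p_i$ is tied to the original sequence---so on this step your argument is no weaker than the source's.
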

\begin{proof}
Let $\widetilde{g}:\beta\big(\bigtimes_{j=1}^m S\big)\rightarrow\beta S$ be the continuous extension of $g$. Then by \cite[Theorem 3.2]{bhw}, $\widetilde{g}\big(\otimes_{j=1}^mp_{f(j)}\big)=g\big(p_{f(1)},p_{f(2)},\ldots,p_{f(m)}\big)$. Let $A=\big\{\big(\sum_{n\in F_1}x_{f(1),n},\sum_{n\in F_2}x_{f(2),n},\ldots,\sum_{n\in F_m}x_{f(m),n}\big):\text{each } F_j\in\mathcal{P}_f(\mathbb{N}) \text{ and } F_1<F_2<\cdots<F_m\big\}$. 

By passing to a subsequence, we may presume that for each $i\in\{1,2,\ldots,l\}$, $\sum_{n=1}^\infty x_{i,n}$ converges, and then we have that $p_i\in 0_S^*$. By Lemma \ref{4.5}, $\otimes_{j=1}^m p_{f(j)}\in 0_{S^m}^*$. Since $g$ is continuous and $g\big(\overline{0}\big)=0$, so for each $\delta>0$, it follows easily that $[0,\epsilon)\cap S\in\widetilde{g}\big(\otimes_{j=1}^m p_{f(j)}\big)$, and thus $\widetilde{g}\big(\otimes_{j=1}^m p_{f(j)}\big)=g\big(p_{f(1)},p_{f(2)},\ldots, p_{f(m)}\big)\in 0_S^*$. Then, by Lemma \ref{4.7}, $A\in\otimes_{j=1}^m p_{f(j)}$, so by \cite[Lemma 3.30]{hs}, $g[A]\in\widetilde{g}\big(\otimes_{j=1}^m p_{f(j)}\big)$. Hence $g[A]\cap (0,\epsilon)\in g\big(p_{f(1)},p_{f(2)},\ldots,p_{f(m)}\big)$.
\end{proof}
\begin{theorem}\label{4.11}
Let $S\subseteq (0,\infty)$ be a dense subsemigroup of both $([0,\infty),+)$ and $([0,\infty),\cdot)$. Let $m,l\in\mathbb{N}$. For each $i\in\{1,2,\ldots,l\}$, let $\langle x_{i,n}\rangle_{n=1}^\infty$ be a sequence in $S$ such that $\lim_{n\rightarrow\infty}x_{i,n}=0$. Let $f:\{1,2,\ldots,m\}\rightarrow\{1,2,\ldots,l\}$ be a function, let $g\in\mathbb{N}[x_1,x_2,\ldots,x_m]$ be a polynomial with $g\big(\overline{0}\big)=0$ and let $A\subseteq S$. The following statements are equivalent.
\begin{enumerate}
    \item\label{4.11(1)} For each $i\in \{1, 2, \ldots, l\}$ there exists $p_i=p_i+p_i\in\bigcap_{r=1}^{\infty} cl_{\beta S}\big(FS(\langle x_{i, n}\rangle_{n=r}^{\infty}\big)\big)$ such that $A\in g(p_{f(1)}, p_{f(2)}, \ldots, p_{f(m)})$.
    \item\label{4.11(2)} For each $\epsilon >0$ and for each $i\in\{1, 2, \ldots, l\}$ there is a sum subsystem $FS\big(\langle y_{i, n}\rangle_{n=1}^\infty\big)$ of $FS\big(\langle x_{i, n}\rangle_{n=1}^\infty\big)$ such that  
    $\Big\{g\big(\sum_{n\in F_1}y_{f(1),n}, \sum_{n\in F_2}y_{f(2),n},$ $\ldots,\sum_{n\in F_m} y_{f(m),n}\big):\text{each } F_j\in\mathcal{P}_f(\mathbb{N}) \text{ and } F_1<F_2<\ldots<F_m\Big\} \subseteq A\cap (0, \epsilon)$. 
\end{enumerate}
\end{theorem}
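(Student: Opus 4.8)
The plan is to transport both implications through the continuous extension $\widetilde{g}:\beta\big(\bigtimes_{j=1}^m S\big)\rightarrow\beta S$ of $g$, relying on the identity $\widetilde{g}\big(\otimes_{j=1}^m p_{f(j)}\big)=g\big(p_{f(1)},\ldots,p_{f(m)}\big)$ from \cite[Theorem 3.2]{bhw}. For sum subsystems $\langle y_{i,n}\rangle_{n=1}^\infty$ write $A_y:=\big\{\big(\sum_{n\in F_1}y_{f(1),n},\ldots,\sum_{n\in F_m}y_{f(m),n}\big):\text{each } F_j\in\mathcal{P}_f(\mathbb{N})\text{ and }F_1<\cdots<F_m\big\}\subseteq S^m$ for the tuple-valued Milliken--Taylor set; the set displayed in (\ref{4.11(2)}) is then precisely $g[A_y]$. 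By \cite[Lemma 3.30]{hs}, for $B\subseteq S$ we have $B\in\widetilde{g}\big(\otimes_{j=1}^m p_{f(j)}\big)$ if and only if $g^{-1}[B]\in\otimes_{j=1}^m p_{f(j)}$, so the whole argument reduces to moving between a subset of $S$ and its $g$-preimage in $S^m$, with Lemmas \ref{4.6} and \ref{4.7} realizing tensor-product membership combinatorially.

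For (\ref{4.11(2)})$\Rightarrow$(\ref{4.11(1)}) I would fix $\epsilon>0$ and take the sum subsystems $\langle y_{i,n}\rangle_{n=1}^\infty$ supplied by (\ref{4.11(2)}), so that $g[A_y]\subseteq A\cap(0,\epsilon)$. For each $i$ pick, by \cite[Lemma 5.11]{hs}, an idempotent $p_i\in\bigcap_{r=1}^\infty cl_{\beta S}\big(FS(\langle y_{i,n}\rangle_{n=r}^\infty)\big)$; as $\langle y_{i,n}\rangle$ is a sum subsystem of $\langle x_{i,n}\rangle$, this $p_i$ lies in $\bigcap_{r=1}^\infty cl_{\beta S}\big(FS(\langle x_{i,n}\rangle_{n=r}^\infty)\big)$. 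Lemma \ref{4.7} gives $A_y\in\otimes_{j=1}^m p_{f(j)}$, hence $g[A_y]\in\widetilde{g}\big(\otimes_{j=1}^m p_{f(j)}\big)=g(p_{f(1)},\ldots,p_{f(m)})$, and since $g[A_y]\subseteq A$ we conclude $A\in g(p_{f(1)},\ldots,p_{f(m)})$. Note that this direction uses neither a convergence hypothesis on the $y$'s nor that the $p_i$ are near $0$.

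For (\ref{4.11(1)})$\Rightarrow$(\ref{4.11(2)}) I would begin from idempotents $p_i$ with $A\in g(p_{f(1)},\ldots,p_{f(m)})=\widetilde{g}\big(\otimes_{j=1}^m p_{f(j)}\big)$ and fix $\epsilon>0$. The key preliminary step is to show $g(p_{f(1)},\ldots,p_{f(m)})\in 0_S^*$: exactly as in the proof of Theorem \ref{4.9}, passing to a subsequence one may assume each $\sum_n x_{i,n}$ converges, so each $p_i\in 0_S^*$; then Lemma \ref{4.5} gives $\otimes_{j=1}^m p_{f(j)}\in\overline{0}_{S^m}^*$, and continuity of $g$ with $g(\overline{0})=0$ forces $\widetilde{g}\big(\otimes_{j=1}^m p_{f(j)}\big)\in 0_S^*$. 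Consequently $A\cap(0,\epsilon)\in g(p_{f(1)},\ldots,p_{f(m)})$, so $g^{-1}[A\cap(0,\epsilon)]\in\otimes_{j=1}^m p_{f(j)}$ by \cite[Lemma 3.30]{hs}. Applying Lemma \ref{4.6} to $g^{-1}[A\cap(0,\epsilon)]$ yields sum subsystems $\langle y_{i,n}\rangle$ of $\langle x_{i,n}\rangle$ with $A_y\subseteq g^{-1}[A\cap(0,\epsilon)]$, and applying $g$ gives $g[A_y]\subseteq A\cap(0,\epsilon)$, which is (\ref{4.11(2)}).

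The main obstacle is the near-zero bookkeeping in the direction (\ref{4.11(1)})$\Rightarrow$(\ref{4.11(2)}), namely securing the restriction to $(0,\epsilon)$; everything else is formal pushforward through $\widetilde{g}$ via \cite[Lemma 3.30]{hs} together with the realization Lemmas \ref{4.6} and \ref{4.7}. This restriction rests on $g(p_{f(1)},\ldots,p_{f(m)})\in 0_S^*$, hence on the $p_i$ being near $0$, which is precisely the subtle point already encountered in Theorem \ref{4.9} and is dispatched by the same convergence device before Lemma \ref{4.5} and $g(\overline{0})=0$ supply the smallness automatically.
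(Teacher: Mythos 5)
Your proof is correct and follows the paper's skeleton almost exactly: both arguments rest on the identification $g(p_{f(1)},\ldots,p_{f(m)})=\widetilde{g}\big(\otimes_{j=1}^m p_{f(j)}\big)$ from \cite[Theorem 3.2]{bhw}, on the near-zero fact $\widetilde{g}\big(\otimes_{j=1}^m p_{f(j)}\big)\in 0_S^*$ obtained by the subsequence/convergence device from the proof of Theorem \ref{4.9} together with Lemma \ref{4.5}, and on Lemmas \ref{4.6} and \ref{4.7} to translate tensor-product membership into sum subsystems and back. There are two local differences, both benign. In (\ref{4.11(1)})$\Rightarrow$(\ref{4.11(2)}) you realize the pullback as $g^{-1}[A\cap(0,\epsilon)]\in\otimes_{j=1}^m p_{f(j)}$ via \cite[Lemma 3.30]{hs} and feed that set to Lemma \ref{4.6}, whereas the paper instead picks $B\in\otimes_{j=1}^m p_{f(j)}$ with $\widetilde{g}\big[cl_{\beta(S^m)}B\big]\subseteq cl_{\beta S}\big(A\cap(0,\epsilon)\big)$ by continuity of $\widetilde{g}$; these are interchangeable. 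In (\ref{4.11(2)})$\Rightarrow$(\ref{4.11(1)}) the paper cites Theorem \ref{4.9} applied to the sum subsystems $\langle y_{i,n}\rangle_{n=1}^\infty$, while you inline Lemma \ref{4.7} and the pushforward through $\widetilde{g}$; your version is in fact slightly tidier, since Theorem \ref{4.9} formally requires $\lim_{n\rightarrow\infty}y_{i,n}=0$ (a hypothesis the paper does not verify for the sum subsystems it produces), whereas your direct argument needs no near-zero or convergence information at all in this direction, exactly as you observe.
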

\begin{proof}
(\ref{4.11(1)}) implies (\ref{4.11(2)}). By \cite[Theorem 3.2]{bhw} $g\big(p_{f(1)},p_{f(2)},\ldots,p_{f(m)}\big)=\widetilde{g}\big(\otimes_{j=1}^mp_{f(j)}\big)$ and from the proof of Theorem \ref{4.9}, we have that $\widetilde{g}\big(\otimes_{j=1}^mp_{f(j)}\big)\in 0_S^*$. Let $\epsilon>0$. Then $A\cap (0,\epsilon)\in\widetilde{g}\big(\otimes_{j=1}^m p_{f(j)}\big)$. Pick $B\in\otimes_{j=1}^m p_{f(j)}$ such that $\widetilde{g}\big[ cl_{\beta(S^m)}B\big]\subseteq cl_{\beta S}(A\cap (0,\epsilon))$. By Lemma \ref{4.6}, for each $i\in\{1,2,\ldots,l\}$ pick a sum subsystem $FS\big(\langle y_{i,n}\rangle_{n=1}^\infty\big)$ of $FS\big(\langle x_{i,n}\rangle_{n=1}^\infty\big)$ such that $\big\{\big(\sum_{n\in F_1}y_{f(1),n},\sum_{n\in F_2}y_{f(2),n},\ldots,$ $\sum_{n\in F_m}y_{f(m),n}\big):\text{each } F_j\in\mathcal{P}_f(\mathbb{N}) \text{ and } F_1<F_2<\cdots<F_m\big\}\subseteq B$. Then $\big\{g\big(\sum_{n\in F_1}y_{f(1),n},\sum_{n\in F_2}y_{f(2),n},\ldots,\sum_{n\in F_m}y_{f(m),n}\big):\text{each } F_j\in\mathcal{P}_f(\mathbb{N}) \text{ and }$ $F_1<F_2<\cdots<F_m\big\}\subseteq g[B]\subseteq A\cap (0,\epsilon)$.

(\ref{4.11(2)}) implies (\ref{4.11(1)}). Fix $\epsilon>0$. For each $i\in\{1,2,\ldots,l\}$ pick a sum subsystem $FS\big(\langle y_{i,n}\rangle_{n=1}^\infty\big)$ of $FS\big(\langle x_{i,n}\rangle_{n=1}^\infty\big)$ as guaranteed in (\ref{4.11(2)}), and pick by \cite[Lemma 5.11]{hs} $p_i=p_i+p_i\in\bigcap_{r=1}^\infty cl_{\beta S}\big(FS\big(\langle y_{i,n}\rangle_{n=r}^\infty\big)\big)\subseteq\bigcap_{r=1}^\infty cl_{\beta S}\big(FS\big(\langle x_{i,n}\rangle_{n=r}^\infty\big)\big)$. By Theorem \ref{4.9}, $\big\{g\big(\sum_{n\in F_1}y_{f(1),n},\sum_{n\in F_2}y_{f(2),n},\ldots,\sum_{n\in F_m}y_{f(m),n}\big): \text{each } F_j\in\mathcal{P}_f(\mathbb{N}) \text{ and } F_1<F_2<\cdots<F_m\big\}\cap(0,\epsilon)\in g\big(p_{f(1)},p_{f(2)},\ldots,p_{f(m)}\big)$, so $A\cap(0,\epsilon)\in g\big(p_{f(1)},p_{f(2)},\ldots,p_{f(m)}\big)$ and thus $A\in g\big(p_{f(1)},p_{f(2)},\ldots,p_{f(m)}\big)$.
\end{proof}
The following two theorems are generalizations of Theorem \ref{Theorem 2.8} for near zero semigroups.
\begin{theorem}\label{4.12}
Let $m\in\mathbb{N}$, and for $i\in\{1,2,\ldots,m\}$ let $S_i\subseteq (0,\infty)$ be a dense subsemigroup of $([0,\infty),+)$. Let $S=\bigtimes_{i=1}^m S_i$ and let $A\subseteq S$. The following statements are equivalent.
\begin{enumerate}
\item\label{4.12(1)} For each $i\in \{1,2,\ldots,m\}$, there is an idempotent $p_i\in 0_{S_i}^*$ such that $A\in\otimes_{i=1}^m p_i$.
\item\label{4.12(2)} For each $i\in\{1,2,\ldots,m\}$, there is a sequence $\langle x_{i,n}\rangle_{n=1}^\infty$ in $S_i$ such that $\sum_{n=1}^\infty x_{i,n}$ converges and $\big\{\big(\sum_{n\in F_1}x_{1,n},\sum_{n\in F_2}x_{2,n},\ldots,\sum_{n\in F_m}x_{m,n}\big): \text{each } F_i\in\mathcal{P}_f(\mathbb{N}) \text{ and } F_1<F_2<\cdots <F_m\big\}\subseteq A$.
\end{enumerate}
\end{theorem}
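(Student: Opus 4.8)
The plan is to prove the two implications separately, getting (\ref{4.12(2)}) $\Rightarrow$ (\ref{4.12(1)}) cheaply from the tensor-product lemmas already in hand, and proving the harder direction (\ref{4.12(1)}) $\Rightarrow$ (\ref{4.12(2)}) by a direct Galvin--Glazer construction in the spirit of Theorem \ref{Theorem 2.8}(2), carried out coordinatewise with an induction on $m$.

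For (\ref{4.12(2)}) $\Rightarrow$ (\ref{4.12(1)}), given the sequences $\langle x_{i,n}\rangle_{n=1}^\infty$ I would pick for each $i$, by \cite[Lemma 5.11]{hs}, an idempotent $p_i\in\bigcap_{r=1}^\infty cl_{\beta S_i}\big(FS(\langle x_{i,n}\rangle_{n=r}^\infty)\big)$. Since $\sum_{n=1}^\infty x_{i,n}$ converges, every $U\in\tau_0$ contains some tail $FS(\langle x_{i,n}\rangle_{n=r}^\infty)\in p_i$, whence $U\cap S_i\in p_i$ and so $p_i\in 0_{S_i}^*$. Applying Lemma \ref{4.7} with $l=m$ and $f$ the identity, the set displayed in (\ref{4.12(2)}) lies in $\otimes_{i=1}^m p_i$; as that set is contained in $A$ and $\otimes_{i=1}^m p_i$ is an ultrafilter, $A\in\otimes_{i=1}^m p_i$, as required.

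For (\ref{4.12(1)}) $\Rightarrow$ (\ref{4.12(2)}) I would induct on $m$. When $m=1$ the ordered set in (\ref{4.12(2)}) is just $FS(\langle x_{1,n}\rangle_{n=1}^\infty)$, so the claim is exactly Theorem \ref{Theorem 2.8}(2) (with $e=0$, $T=([0,\infty),+)$, and countable local base $\langle [0,1/k)\rangle_{k=1}^\infty$). For the inductive step, fix a decreasing neighborhood base $\langle W_k\rangle_{k=1}^\infty$ at $0$ and rewrite $A\in\otimes_{i=1}^m p_i$ through iterated sections: putting $A_{(a_1,\ldots,a_{m-1})}=\{x_m\in S_m:(a_1,\ldots,a_{m-1},x_m)\in A\}$ and $B=\{(a_1,\ldots,a_{m-1}):A_{(a_1,\ldots,a_{m-1})}\in p_m\}$, Definition \ref{4.4} gives $B\in\otimes_{i=1}^{m-1}p_i$. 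By the inductive hypothesis there are convergent sequences $\langle x_{i,n}\rangle_{n=1}^\infty$ for $1\le i\le m-1$ whose ordered tuple-set lies in $B$; hence for every $F_1<\cdots<F_{m-1}$, writing $b_i=\sum_{n\in F_i}x_{i,n}$, we have $A_{(b_1,\ldots,b_{m-1})}\in p_m$. It then remains to build one further convergent sequence $\langle x_{m,n}\rangle_{n=1}^\infty$ with $\sum_{n\in F_m}x_{m,n}\in A_{(b_1,\ldots,b_{m-1})}$ whenever $F_{m-1}<F_m$. I would produce it by a Galvin--Glazer construction driven by $p_m$, using the starred sets $C^\star=\{x\in C:-x+C\in p_m\}$ exactly as in the proof of Theorem \ref{Theorem 2.8}(2): at stage $N$ I let $D_N$ be the intersection of the finitely many sections $A_{(b_1,\ldots,b_{m-1})}$ whose blocks lie in $\{1,\ldots,N-1\}$, which is still a member of $p_m$, and I choose $x_{m,N}\in W_N\cap D_N^\star$. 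The $W_N$-constraint forces $\sum x_{m,n}$ to converge, and a downward induction on $F_m$ (as in Theorem \ref{Theorem 2.8}(2)) places $\sum_{n\in F_m}x_{m,n}$ in $D_{\min F_m}\subseteq A_{(b_1,\ldots,b_{m-1})}$ for every admissible $F_1<\cdots<F_m$.

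The main obstacle is precisely this last coordination: the target section $A_{(b_1,\ldots,b_{m-1})}$ depends on the earlier blocks, and a priori infinitely many such sections must be respected at once. What rescues the argument is the strict ordering $F_1<\cdots<F_m$, which guarantees that once $\min F_m\ge N$ only the finitely many tuples $(b_1,\ldots,b_{m-1})$ built inside $\{1,\ldots,N-1\}$ are relevant, so at each stage one keeps only a finite intersection of members of $p_m$, which remains in $p_m$. This is the same mechanism that underlies Lemma \ref{4.6}; note, however, that one cannot merely quote Lemma \ref{4.6} here, because the given idempotents $p_i$ need not lie in $\bigcap_{r=1}^\infty cl_{\beta S_i}\big(FS(\langle x_{i,n}\rangle_{n=r}^\infty)\big)$ for any convergent sequence, so the sequences must be generated directly from the $p_i$ by the construction above.
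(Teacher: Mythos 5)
Your proof is correct, and your easy direction (\ref{4.12(2)}) $\Rightarrow$ (\ref{4.12(1)}) coincides exactly with the paper's (pick idempotents via \cite[Lemma 5.11]{hs}, note convergence forces $p_i\in 0_{S_i}^*$, apply Lemma \ref{4.7} with $l=m$ and $f$ the identity). For (\ref{4.12(1)}) $\Rightarrow$ (\ref{4.12(2)}), however, you take a genuinely different route: the paper does not induct or construct anything by hand. It observes via Lemma \ref{4.5} that $\otimes_{i=1}^m p_i\in\overline{0}_S^*$, hence $A\cap\big(0,\tfrac{1}{2}\big)^m\in\otimes_{i=1}^m p_i$, then quotes \cite[Theorem 1.16]{bhw} --- the general extraction theorem producing, for any member of a tensor product of idempotents, sequences whose ordered tuple-set lies inside it --- and finally gets convergence for free: every sum $\sum_{n\in F}x_{i,n}$ with $\min F\geq i$ occurs as a coordinate of a tuple in $A\cap\big(0,\tfrac{1}{2}\big)^m$, so all such finite sums are bounded by $\tfrac{1}{2}$, and a positive series with bounded finite sums converges. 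Your argument instead re-proves the extraction from scratch: induction on $m$, sectioning $A$ exactly as in Definition \ref{4.4}, and a Galvin--Glazer construction for the last coordinate with the neighborhoods $W_N$ interleaved so that convergence is built in. Your observation that the ordering $F_1<\cdots<F_m$ keeps the set of constraints finite at each stage is precisely what makes this sound, and you are right that Lemma \ref{4.6} cannot simply be quoted; what you apparently did not have at hand is \cite[Theorem 1.16]{bhw}, which is the tool the paper uses to shortcut all of this. So the paper's route buys brevity (the $\big(0,\tfrac{1}{2}\big)^m$ trick converts ``near zero'' into a consequence of boundedness), while yours buys self-containedness. One caution on your write-up: as literally stated, choosing $x_{m,N}\in W_N\cap D_N^\star$ is not sufficient; the choice must also lie in the sets carried over from earlier stages, i.e. one must take $x_{m,N}\in C_N^\star$ where $C_{N+1}=(-x_{m,N}+C_N^\star)\cap C_N\cap(D_{N+1}\cap W_{N+1})$, exactly as in Theorem \ref{Theorem 2.8}(2). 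Since you explicitly defer to that proof for the bookkeeping, this is an imprecision of the sketch rather than a gap, but it should be spelled out in a final version.
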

\begin{proof}
(\ref{4.12(1)}) implies (\ref{4.12(2)}). For $i\in\{1,2,\ldots,m\}$ pick an idempotent $p_i\in 0_{S_i}^*$ such that $A\in\otimes_{i=1}^m p_i$. By Lemma \ref{4.5}, $\otimes_{i=1}^m p_i\in \overline{0}_S^*$ so $\bigtimes_{i=1}^m \big(\big(0,\frac{1}{2}\big)\cap S_i\big)\in \otimes_{i=1}^m p_i$. Therefore $A\cap \bigtimes_{i=1}^m \big(0,\frac{1}{2}\big) \in \otimes_{i=1}^m p_i$. By \cite[Theorem 1.16]{bhw}, for each $i\in\{1,2,\ldots,m\}$ pick a sequence $\langle x_{i,n}\rangle_{n=1}^\infty$ in $S_i$ such that $\big\{\big(\sum_{n\in F_1}x_{1,n},\sum_{n\in F_2}x_{2,n},$ $\ldots,\sum_{n\in F_m}x_{m,n}\big): \text{each } F_i\in\mathcal{P}_f(\mathbb{N}) \text{ and } F_1<F_2<\cdots<F_m\big\}\subseteq A\cap \bigtimes_{i=1}^m\big(0,\frac{1}{2}\big)\subseteq A$. Then for each $i\in\{1,2,\ldots,m\}$ and each $F\in\mathcal{P}_f(\mathbb{N})$ with $\min F \geq i$, $\sum_{n\in F}x_{i,n}<\frac{1}{2}$ so $\sum_{n=1}^\infty x_{i,n}$ converges.

(\ref{4.12(2)}) implies (\ref{4.12(1)}). For each $i\in\{1,2,\ldots,m\}$, pick by \cite[Lemma 5.11]{hs} an idempotent $p_i\in\bigcap_{r=1}^\infty cl_{\beta S_i}\big(FS\big\langle x_{i,n}\rangle_{n=r}^\infty\big)\big)$. For each $i\in\{1,2,\ldots,m\}$, since $\sum_{n=1}^\infty x_{i,n}$ converges, so $p_i\in 0_{S_i}^*$. Let $l=m$ and let $f:\{1,2,\ldots,m\}\rightarrow\{1,2,\ldots,l\}$ be the identity function. Apply Lemma \ref{4.7}.
\end{proof}

\begin{theorem}\label{4.13}
Let $m\in\mathbb{N}$, let $S\subseteq (0,\infty)$ be a dense subsemigroup of $([0,\infty),+)$ and let $A\subseteq S^m$. The following statements are equivalent.
\begin{enumerate}
\item\label{4.13(1)} There is an idempotent $p\in 0_S^*$ such that $A\in \otimes_{i=1}^mp$.
\item\label{4.13(2)} There is a sequence $\langle x_n\rangle_{n=1}^\infty$ in $S$ such that $\sum_{n=1}^\infty x_n$ converges and $\big\{\big(\sum_{n\in F_1}x_n,\sum_{n\in F_2}x_n,\ldots,\sum_{n\in F_m}x_n\big): \text{each } F_i\in\mathcal{P}_f(\mathbb{N}) \text{ and } F_1<F_2<\cdots <F_m\big\}\subseteq A$.
\end{enumerate}
\end{theorem}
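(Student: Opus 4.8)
The plan is to recognize Theorem \ref{4.13} as the \emph{diagonal} special case of Theorem \ref{4.12}: it is exactly what the product version yields once the coordinate semigroups, the idempotents, and the sequences are all forced to coincide. Concretely, I would run the same machinery (Lemma \ref{4.5}, Lemma \ref{4.6}, Lemma \ref{4.7}) that drives Theorem \ref{4.12}, but applied with $l=1$, with the single factor equal to $S$ and $p_1=p$, and with the \emph{constant} function $f:\{1,2,\ldots,m\}\to\{1\}$ in place of the identity function used for Theorem \ref{4.12}. Because $f\equiv 1$, the Milliken--Taylor type set $\big\{\big(\sum_{n\in F_1}x_{f(1),n},\ldots,\sum_{n\in F_m}x_{f(m),n}\big):F_1<\cdots<F_m\big\}$ collapses to the single-sequence diagonal set $\big\{\big(\sum_{n\in F_1}x_n,\ldots,\sum_{n\in F_m}x_n\big):F_1<\cdots<F_m\big\}$ that appears in (\ref{4.13(2)}), so the whole argument produces one sequence $\langle x_n\rangle_{n=1}^\infty$ rather than $m$ separate ones.

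For (\ref{4.13(2)}) implies (\ref{4.13(1)}): given $\langle x_n\rangle$ with $\sum_{n=1}^\infty x_n$ convergent and the diagonal set contained in $A$, I would first pick by \cite[Lemma 5.11]{hs} an idempotent $p\in\bigcap_{r=1}^\infty cl_{\beta S}\big(FS(\langle x_n\rangle_{n=r}^\infty)\big)$. Convergence forces $p\in 0_S^*$: for any $U\in\tau_0$ some tail satisfies $FS(\langle x_n\rangle_{n=m}^\infty)\subseteq U\cap S$, and since $p\in cl_{\beta S}(FS(\langle x_n\rangle_{n=m}^\infty))$ we get $U\cap S\in p$, exactly as in the proof of Theorem \ref{Theorem 2.8}(1). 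Then Lemma \ref{4.7}, taken with $l=1$ and $f\equiv 1$, gives that the diagonal set belongs to $\otimes_{i=1}^m p$; as that set is contained in $A$, we conclude $A\in\otimes_{i=1}^m p$, which is (\ref{4.13(1)}).

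For (\ref{4.13(1)}) implies (\ref{4.13(2)}): starting from an idempotent $p\in 0_S^*$ with $A\in\otimes_{i=1}^m p$, Lemma \ref{4.5} (applied with every factor equal to $S$ and every $p_i$ equal to $p$) gives $\otimes_{i=1}^m p\in\overline{0}_{S^m}^*$, so $\bigtimes_{i=1}^m\big((0,\tfrac12)\cap S\big)\in\otimes_{i=1}^m p$ and hence $A\cap(0,\tfrac12)^m\in\otimes_{i=1}^m p$. I would then realize $p$ by a single sequence and sharpen it via Lemma \ref{4.6}, again with $l=1$ and $f\equiv 1$, to obtain one sequence $\langle x_n\rangle$ whose diagonal set is contained in $A\cap(0,\tfrac12)^m$; this is precisely the single-sequence form of the step that \cite[Theorem 1.16]{bhw} supplies in the proof of Theorem \ref{4.12}. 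Convergence of $\sum_{n=1}^\infty x_n$ is then automatic: for $F\in\mathcal{P}_f(\mathbb{N})$ with $\min F\ge i$ the $i$th coordinate $\sum_{n\in F}x_n$ is below $\tfrac12$, so every tail eventually lies inside any $(0,\varepsilon)$.

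The main obstacle, and the only genuine difference from Theorem \ref{4.12}, is bookkeeping the collapse to a single sequence: the proof of Theorem \ref{4.12} applies the tools of \cite{bhw} with $l=m$ and the identity $f$, which manufactures $m$ a priori distinct sequences $\langle x_{i,n}\rangle$, whereas (\ref{4.13(2)}) demands one sequence used in every coordinate. The device that resolves this is simply the choice $l=1$ with the constant $f$, so that a single sum subsystem feeds all $m$ slots; once that is in place, the convergence-near-zero bookkeeping (via Lemma \ref{4.5} and the $(0,\tfrac12)^m$ truncation) and the membership transfer (via Lemmas \ref{4.6} and \ref{4.7}) carry over verbatim from Theorem \ref{4.12}.
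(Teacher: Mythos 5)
Your direction (\ref{4.13(2)})$\Rightarrow$(\ref{4.13(1)}) is exactly the paper's argument: pick by \cite[Lemma 5.11]{hs} an idempotent $p\in\bigcap_{r=1}^\infty cl_{\beta S}\big(FS(\langle x_n\rangle_{n=r}^\infty)\big)$, note that convergence of $\sum_{n=1}^\infty x_n$ forces $p\in 0_S^*$, and apply Lemma \ref{4.7} with $l=1$ and $f$ constant. That half is fine.

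The gap is in (\ref{4.13(1)})$\Rightarrow$(\ref{4.13(2)}), at the phrase ``realize $p$ by a single sequence.'' Lemma \ref{4.6} applies only to an idempotent that is already known to lie in $\bigcap_{r=1}^\infty cl_{\beta S}\big(FS(\langle z_n\rangle_{n=r}^\infty)\big)$ for some given sequence $\langle z_n\rangle_{n=1}^\infty$; equivalently, $p$ must have some set $FS(\langle z_n\rangle_{n=1}^\infty)$ as a \emph{member}. Statement (\ref{4.13(1)}) hands you only an idempotent $p\in 0_S^*$ with $A\in\otimes_{i=1}^m p$, and no such sequence comes with it. Nothing in the paper produces one: the Galvin-style construction behind Theorem \ref{Theorem 2.8}(2) places a set $FS(\langle z_n\rangle_{n=1}^\infty)$ \emph{inside} each member of $p$, which is far weaker than making that set an element of $p$, and an arbitrary idempotent need not lie in any such FS-tail subsemigroup. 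This is precisely the difference between Theorem \ref{4.11} --- whose hypothesis explicitly supplies sequences with $p_i\in\bigcap_{r=1}^\infty cl_{\beta S}\big(FS(\langle x_{i,n}\rangle_{n=r}^\infty)\big)$, so that Lemma \ref{4.6} is legitimately applicable --- and Theorem \ref{4.13}, where it is not. The paper's proof avoids the issue by citing \cite[Theorem 1.17]{bhw}, a tensor-power Galvin-type theorem that manufactures the single sequence \emph{directly} from the membership $A\cap\big(0,\frac{1}{2}\big)^m\in\otimes_{i=1}^m p$ for an arbitrary idempotent $p$, with no realization step. (For the same reason, the paper's Theorem \ref{4.12} does not run on Lemma \ref{4.6} either; it cites \cite[Theorem 1.16]{bhw}.) Your surrounding steps --- Lemma \ref{4.5}, the truncation to $\big(0,\frac{1}{2}\big)^m$, and the convergence argument from $\min F\geq i$ --- are the paper's; replacing ``realize $p$, then Lemma \ref{4.6}'' with an appeal to \cite[Theorem 1.17]{bhw} repairs the proof.
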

\begin{proof}
(\ref{4.13(1)}) implies (\ref{4.13(2)}). Pick an idempotent $p\in 0_S^*$ such that $A\in\otimes_{i=1}^mp$. By Lemma \ref{4.5}, $\otimes_{i=1}^mp\in \overline{0}_{S^m}^*$ so $\bigtimes_{i=1}^m\big(\big(0,\frac{1}{2}\big)\cap S\big)\in\otimes_{i=1}^mp$. Therefore $A\cap\bigtimes_{i=1}^m\big(0,\frac{1}{2}\big)\in\otimes_{i=1}^mp$. Pick by \cite[Theorem 1.17]{bhw} a sequence $\langle x_n\rangle_{n=1}^\infty$ in $S$ such that $\big\{\big(\sum_{n\in F_1}x_n,\sum_{n\in F_2}x_n,\ldots,\sum_{n\in F_m}x_n\big):\text{each } F_i\in\mathcal{P}_f(\mathbb{N}) \text{ and } F_1<F_2<\cdots<F_m\big\}\subseteq A\cap\bigtimes_{i=1}^m\big(0,\frac{1}{2}\big)\subseteq A$. Then for each $F\in\mathcal{P}_f(\mathbb{N})$ with $\min F\geq i$, $\sum_{n\in F}x_n<\frac{1}{2}$ so $\sum_{n=1}^\infty x_n$ converges.

(\ref{4.13(2)}) implies (\ref{4.13(1)}). Pick by \cite[Lemma 5.11]{hs} an idempotent $p\in\bigcap_{r=1}^\infty cl_{\beta S}\big(FS\big(\langle x_n\rangle_{n=r}^\infty\big)\big)$. Since $\sum_{n=1}^\infty x_n$ converges, so $p\in 0_S^*$. By Lemma \ref{4.7} with $l=1$, $A\in\otimes_{i=1}^m p$.
\end{proof}

\section*{Acknowledgments}
The authors are indebted to the anonymous reviewers for their generous comments and suggestions on the previous manuscripts. The first author gratefully acknowledges the grant UGC-NET SRF fellowship with Ref. No. 20/12/2015(ii)EU-V of CSIR-UGC NET December 2015.

\end{document}